\documentclass[12 pt]{article}
\usepackage{graphicx} 
\usepackage{amsmath}
\usepackage{bm}
\usepackage{amsthm}
\usepackage[english]{babel}
\usepackage{hyperref}
\usepackage{geometry}
\usepackage{bbold}
\usepackage{stmaryrd}
\usepackage[all]{xy}
\usepackage{tikz}
\usepackage{MnSymbol}
\usepackage{pgf,tikz}
\usetikzlibrary{shapes}
\usetikzlibrary{arrows.meta}
\usetikzlibrary{graphs} 
\usetikzlibrary{graphs,quotes} 
\usetikzlibrary{graphs,shapes.geometric}
\usetikzlibrary{decorations.markings}
\usetikzlibrary{%
  arrows,
  knots,
  calc,
}
\tikzstyle directed=[postaction={decorate,decoration={markings,
		mark=at position .65 with {\arrow{latex}}}}]
\geometry{top=4 cm, bottom=4 cm, left=3.2 cm, right=3.2 cm}

\numberwithin{equation}{section}
\newtheorem{proposition}[equation]{Proposition}
\newtheorem{theorem}[equation]{Theorem}
\newtheorem{corollary}[equation]{Corollary}
\newtheorem{lemma}[equation]{Lemma}

\newtheorem{intro}{Theorem}
\newtheorem{introth}[intro]{Theorem}

\newtheorem{introprop}[intro]{Proposition}

\theoremstyle{definition}
\newtheorem{definition}[equation]{Definition}
\newtheorem{propdef}[equation]{Proposition-Definition}

\newtheorem{remark}[equation]{Remark}

\newcommand\Sub{\mathrm{Sub}}
\newcommand\Stab{\mathrm{Stab}}
\newcommand\Ph{\mathrm{Ph}}
\newcommand\dom{\mathrm{dom}}
\newcommand\rng{\mathrm{rng}}
\newcommand\trg{\mathbf{t}}
\newcommand\src{\mathbf{s}}
\newcommand\var{\mathbf{u}}
\newcommand\BS{\mathrm{BS}}
\newcommand\Sch{\mathrm{Sch}}
\newcommand\image{\mathrm{Im}}
\newcommand\id{\mathrm{id}}
\title{Perfect kernel of generalized Baumslag-Solitar groups}
\author{Sasha Bontemps}
\date{\today}

\newenvironment{acknowledgements}{%
	
	\begin{abstract}
	}{%
	\end{abstract}
}

\begin{document}
	
	\maketitle
	
	\begin{abstract}
		In this article, we study the space of subgroups of generalized Baumslag-Solitar groups (GBS groups), that is, groups acting cocompactly on an oriented tree without inversion and with infinite cyclic vertex and edge stabilizers. Our results generalize the study of Baumslag-Solitar groups in \cite{solitar1}. Given a GBS group $\Gamma$ defined by a graph of groups whose existence is given by Bass-Serre theory (\textit{cf.} \cite{serre}), we associate to any subgroup of $\Gamma$ an integer, which is a generalization of the phenotype defined in \cite{solitar1}. This quantity is invariant under conjugation and allows us to decompose the perfect kernel of $\Gamma$ into pieces which are invariant under conjugation and on which $\Gamma$ acts highly topologically transitively. To achieve this, we interpret graphs of subgroups of $\Gamma$ as "blown up and shrunk" Schreier graphs of transitive actions of $\Gamma$. We also describe the topology of the pieces which appear in the decomposition. 
	\end{abstract}
	
	{
		\small	
		\noindent\textbf{{Keywords:}} Generalized Baumslag-Solitar groups; space of subgroups; Schreier graphs; perfect kernel; highly topologically transitive actions; Bass-Serre theory.
	}
	
	\smallskip
	
	{
		\small	
		\noindent\textbf{{MSC-classification:}}	
		37B; 20E06; 20E08.
	}
	
	\tableofcontents

	\section{Introduction}
	
	A generalized Baumslag-Solitar group (GBS group) is a group that acts cocompactly without inversion on an oriented tree  with vertex and edge stabilizers isomorphic to $\mathbb{Z}$. By Bass-Serre theory, these groups are isomorphic to an iteration of HNN-extensions and amalgamated free products of infinite cyclic groups. Baumslag-Solitar groups $\BS(m,n)$ are a particular case of such groups. In this article, we are interested in the space of subgroups of those groups from a topological point of view.

	The set of subgroups of any infinite countable group $\Gamma$ is a closed subset of a Cantor space, hence is Polish. Cantor-Bendixson theory leads to a decomposition of the space of subgroup $\Sub(\Gamma)$ of $\Gamma$ into a closed subspace without isolated points called the \textbf{perfect kernel} and denoted by $\mathcal{K}(\Gamma)$ and a countable set. We are interested in the computation of the perfect kernel and the dynamics induced by the action by conjugation of $\Gamma$ on it.

	In the case of finitely generated groups, finite index subgroups are isolated. Hence, one has the following inclusion: 
	\[\mathcal{K}(\Gamma) \subseteq \Sub_{[\infty]}(\Gamma)\]
	where $\Sub_{[\infty]}(\Gamma)$ denotes the set of subgroups of $\Gamma$ that have infinite index. 
	Hence understanding when the previous inclusion is an equality is a natural question. 
	In \cite{totipotent}, the authors observed that equality holds in the case of non-abelian free groups.
	The authors of \cite{AG} proved that infinite ended groups also satisfy this equality. 
	In the case of non-solvable Baumslag-Solitar groups $\BS(m,n)$, equality holds if and only if $|m| \neq |n|$ (\textit{cf.} \cite{solitar1}). More generally, a subgroup of $\BS(m,n)$ lies in the perfect kernel if and only if its graph of groups decomposition given by its action on the Bass-Serre tree $\mathcal{A}$ induced by the standard presentation 
	\begin{equation}\label{bsmn}
		\BS(m,n) \simeq \left\langle b, t | tb^nt^{-1} = b^m \right\rangle
	\end{equation} is infinite. 
	
	The action of $\Gamma$ on itself by conjugation leads to an action of $\Gamma$ on $\Sub(\Gamma)$ by homeomorphisms, which preserves the perfect kernel. The dynamics induced on the perfect kernel has been studied in various cases. For instance, the authors of \cite{AG} proved that the action by conjugation of the free group $\mathbb{F}_k$ on $k$ generators on its perfect kernel is highly topologically transitive for every $k \geq 2$. Recall that an action $\alpha: \Gamma \curvearrowright X$ on a Polish space is said \textbf{highly topologically transitive} if it is $r$-topologically transitive for every $r \in \mathbb{N}$, \textit{i.e.} for every non-empty open subsets $U_1,...,U_r, V_1,...,V_r$ of $X$, there exists an element $\gamma \in \Gamma$ such that $\alpha(\gamma)U_i \cap V_i \neq \emptyset$ for every $i \in \llbracket 1,r \rrbracket$. A $1$-topologically transitive action is said \textbf{topologically transitive}. By Baire's theorem, this is equivalent to having a dense orbit. 
	
	In \cite{solitar1}, the authors obtained a decomposition of the perfect kernel of non-amenable Baumslag-Solitar groups into countably many pieces on which the action by conjugation is topologically transitive. In \cite{solitar2} the same authors announced that the action on each piece of the decomposition is highly topologically transitive. One of the pieces is closed and all the other ones are open, and also closed if and only if the group is \textbf{unimodular}. Recall that one characterization of unimodularity for a GBS group is the existence of an infinite cyclic normal subgroup (see \cite[Section 2]{levittisom} for instance). This decomposition is encoded by the \textbf{phenotype} of the subgroups of $\BS(m,n)$, an arithmetic quantity in $\mathbb{N} \cup \{\infty\}$ that is invariant under conjugation and which only depends on the intersection with the subgroup generated by an elliptic element (such as $b$ in the presentation \eqref{bsmn}).
	
	In this article, our goal is to extend these results to generalized Baumslag-Solitar group. We are interested in the non-amenable GBS groups, \textit{i.e.} which are not isomorphic to $\BS(1, n)$ for some $n \in \mathbb{Z}$. 
	We prove the following result (\textit{cf.} Theorem \ref{kernel1}): 
	\begin{introth}
		Let $\Gamma$ be a non-amenable GBS group. Then \begin{itemize}
			\item If $\Gamma$ is non-unimodular, then $\mathcal{K}(\Gamma) = \Sub_{[\infty]}(\Gamma)$.
			\item If $\Gamma$ is unimodular, then denoting by $C$ an infinite cyclic normal subgroup and $\pi: \Gamma \to \Gamma / C$ the canonical projection, one has $\mathcal{K}(\Gamma) = \pi^{-1}\left(\Sub_{[\infty]}(\Gamma/C)\right)$, \textit{i.e.} the set of subgroups of $\Gamma$ whose projection under $\pi$ has infinite index in $\Gamma / C$. 
		\end{itemize}
	\end{introth}
	Notice that any two infinite cyclic normal subgroups of a non-amenable unimodular GBS group $\Gamma$ (given by its action on its Bass-Serre tree $\mathcal{A}$) are commensurable (that is to say, share a common finite index normal subgroup): if $g \in \Gamma \setminus \{1\}$ generates an infinite cyclic normal subgroup of $\Gamma$ and $g$ is not elliptic, then $g$ is hyperbolic and by normality its axis is fixed by any element of $\Gamma$. Thus Bass-Serre theory tells us that $\Gamma$ is isomorphic to $\BS(1, \varepsilon)$ (for some $\varepsilon \in \{1, -1\}$), hence is amenable. Hence any infinite cyclic normal subgroup of a non-amenable GBS group is a non-trivial subgroup of a vertex stabilizer, so is commensurable to any vertex stabilizer. Hence the computation of the perfect kernel in this second case does not depend on the choice of the subgroup $C$.

     In Theorem \ref{toptrans}, we provide a study of the dynamics induced by the conjugation action on the perfect kernel $\mathcal{K}(\Gamma)$:

    \begin{introth}\label{maintheo}
        Let $\Gamma$ be a non-amenable GBS group. There exists an infinite countable subset $\mathcal{Q}$ of $\mathbb{N}^*$ and a partition \[\mathcal{K}(\Gamma) = \mathcal{K}_{\infty} \sqcup \bigsqcup_{P \in \mathcal{Q}}\mathcal{K}_P\] such that, for every $P \in \mathcal{Q} \sqcup \{\infty\}$: \begin{itemize}
            \item $\mathcal{K}_P$ is non-empty and invariant under conjugation;
            \item the conjugation action $\Gamma \curvearrowright \mathcal{K}_P$ is highly topologically transitive.
        \end{itemize}
    \end{introth}
    
	One also investigates the topology of the pieces in the decomposition of the space of subgroups (\textit{cf.} Proposition \ref{topologiepieces}): 
	
	\begin{introprop} Let $\Gamma$ be a non-amenable GBS group. In the decomposition provided by Theorem \ref{maintheo}, for the topology induced on $\mathcal{K}(\Gamma)$: \begin{itemize}
			\item the piece $\mathcal{K}_{\infty}$ is closed;
			\item for every $P \in \mathcal{Q}$, the piece $\mathcal{K}_{P}$ is open. It is also closed if and only if the group $\Gamma$ is unimodular. 
		\end{itemize}
	\end{introprop}
	
    To build this decomposition, we introduce a generalized notion of phenotype $\bm{\Ph}_{\mathcal{H},v}$ of a GBS group $\Gamma$ defined by a graph of groups $\mathcal{H}$ pointed at some vertex $v$. This is a function from the space of subgroups of $\Gamma$ to $\mathbb{N}^* \sqcup \{\infty\}$ whose image is infinite. With the notations of Theorem \ref{maintheo}, one has $\mathcal{Q} = \mathcal{Q}_{\mathcal{H},v} := \Ph_{\mathcal{H}, v}\left(\mathbb{N}\right)$ and, for any $P \in  \mathcal{Q} \sqcup \{\infty\}$, the piece $\mathcal{K}_P$ is exactly $\mathcal{K}(\Gamma) \cap \bm{\Ph}_{\mathcal{H},v}^{-1}(P)$ (and these pieces are non-empty). 
	
	It turns out that the piece $\bm{\Ph}_{\mathcal{H},v}^{-1}(\infty)$ depends neither on the choice of $\mathcal{H}$ nor of $v$. It consists of the set of subgroups that act freely on the Bass-Serre tree of $\Gamma$ associated to $\mathcal{H}$. Equivalently, this is the set of subgroups that intersect the subgroup generated by any elliptic element trivially. This set of elliptic elements is intrinsically characterized as the set of elements that are commensurable to all their conjugates (see Proposition \ref{ell}). In particular, the decomposition $\mathcal{K}(\Gamma) = \bigsqcup_{N \in \mathcal{Q}_{\mathcal{H}, v} \sqcup \{\infty \}}\mathcal{K}(\Gamma) \cap \bm{\Ph}_{\mathcal{H},v}^{-1}(N)$ depends neither on the choice of the graph $\mathcal{H}$ nor on the choice of the vertex $v$ (see Proposition \ref{intrinsic}). The open pieces of this decomposition are uniquely characterized as follows: they are the maximal topologically transitive open subsets for the action of $\Gamma$ on $\mathcal{K}(\Gamma) \smallsetminus \mathcal{K}_{\infty}$. 
	
	To obtain these results, we generalize the notion of preactions and $(m,n)$-graphs defined for Baumslag-Solitar groups in \cite{solitar1}. We introduce the notion of $\mathcal{H}$-\textbf{preaction} (\textit{cf.} Definition \ref{hpreac}) to establish a link between the Schreier graph of a subgroup $\Lambda$ of $\Gamma$ and its graph of groups $\Lambda \backslash \mathcal{A}$. This notion was already defined in \cite{hightrans} in the case of a single HNN-extension and in the case of an amalgamated free product (\textit{i.e.} in the case of a graph of groups $\mathcal{H}$ which consists in a single edge). Our notion of preaction is a little more tricky because it depends on the choice of a spanning tree of $\mathcal{H}$. 
	
		\begin{acknowledgements}
		I am deeply grateful to my PhD advisor Damien Gaboriau for encouraging me to work on this subject and for the enlightening discussions that gave rise to the writing of this article. I also want to express my gratitude to François Le Maître and Yves Stalder for their useful remarks on the preliminary versions of this work. I also thank François Le Maître for giving a dynamical interpretation of the phenotypical decomposition in Lemma \ref{decompdyn}. I warmly thank Rémi Coulon and Ashot Minasyan for their careful reading and valuable remarks on this text. Last but not least, I thank the anonymous referee for their incredible work, and for valuable suggestions that significantly improved the paper. This work was supported by a CDSN from ENS de Lyon.
	\end{acknowledgements}

	\section{Preliminaries and notations}
	
	We denote by $\mathcal{P}$ the set of prime numbers in $\mathbb{Z}$. For every integer $N$, we denote by $|N|_p$ its $p$-adic valuation, that is, the largest $n \in \mathbb{N}$ such that $p^n$ divides $N$. By convention, $|\infty|_p = \infty$ for every prime number. For every integers $m,n \in \mathbb{Z} \setminus \{0\}$ we denote by $m \wedge n$ their greatest common divisor, that is, the largest integer $k \in \mathbb{N}$ dividing both $m$ and $n$. By convention, $\infty \wedge n = n \wedge \infty = |n|$ for every $n \in \mathbb{Z}^*$. By ``countable'' we mean finite or in bijection with $\mathbb{N}$. 
	
	\subsection{Graphs}
	
	\subsubsection{Definitions and notations}\label{def}
	
	We follow the notations of \cite{serre}.
	\begin{definition}
		A \textbf{graph} $\mathcal{G}$ is the data of \begin{itemize}
			\item A set of \textbf{vertices} $\mathcal{V}(\mathcal{G})$;
			\item A set of \textbf{edges} $\mathcal{E}\left(\mathcal{G}\right)$; 
			\item A map $\src: \mathcal{E}\left(\mathcal{G}\right) \to \mathcal{V}\left(\mathcal{G}\right)$ called \textbf{source};
			\item A map $\trg: \mathcal{E}\left(\mathcal{G}\right) \to \mathcal{V}\left(\mathcal{G}\right)$ called \textbf{target};
			\item a fixed-point free involution $\begin{array}{ccccc}
				\mathcal{E}\left(\mathcal{G}\right) & \to & \mathcal{E}\left(\mathcal{G}\right) \\
				e & \mapsto & \overline{e} \\
			\end{array}$ such that $\src(\overline{e}) = \trg(e)$ and $\trg(\overline{e}) = \src(e)$.
		\end{itemize}
		An \textbf{orientation} of the graph is a partition $\mathcal{E}\left(\mathcal{G}\right) = \mathcal{E}^+(\mathcal{G}) \bigsqcup \mathcal{E}^-(\mathcal{G})$ whose pieces are exchanged by the involution where \begin{itemize}
			\item The elements of $\mathcal{E}^+(\mathcal{G})$ are called \textbf{positive edges};
			\item The elements of $\mathcal{E}^-(\mathcal{G})$ are called \textbf{negative edges}.
		\end{itemize}
		A graph which is equipped with an orientation is called an \textbf{oriented graph}. 
	\end{definition}
	
	Let $\mathcal{G}$ be an oriented graph and $v \in \mathcal{V}\left(\mathcal{G}\right)$. An edge $e \in \mathcal{E}\left(\mathcal{G}\right)$ is said to be \textbf{incident} to $v$ if $v \in \{\src(e), \trg(e)\}$. It is \textbf{incoming} in $v$ if $\trg(e) = v$. We say that $e$ is \textbf{outgoing} from $v$ if $\src(e) = v$. A \textbf{leaf} of $\mathcal{G}$ is a vertex $f \in \mathcal{V}\left(\mathcal{G}\right)$ which is incident to exactly two edges ($e$ and $\overline{e}$ for some $e \in \mathcal{E}(\mathcal{G})$). 
	
	An \textbf{edge path} of length $r$ of a graph $\mathcal{G}$ is a sequence $c$ of edges $e_1,...,e_r \in \mathcal{E}\left(\mathcal{G}\right)$ satisfying $\trg(e_i) = \src(e_{i+1})$ for every $i \in \llbracket 1,r-1 \rrbracket$. It is said to be \textbf{reduced} if for every $i \in \llbracket 1,r-1 \rrbracket$, one has $e_{i+1} \neq \overline{e_i}$. It is \textbf{based at} $s$ if $\src(e_1) = s$. It \textbf{connects} $s$ to $t$ if $\src(e_1) = s$ and $\trg(e_r) = t$. If $\trg(e_r) = \src(e_1)$, then $c$ is a \textbf{cycle} (based at $s = \src(e_1)$). A \textbf{loop} is a cycle of length $1$. An edge path $c = e_1, ..., e_r$ is \textbf{simple} if either $r=1$ or if it is reduced and the $\src(e_i)$ are pairwise distinct (for $i \in \llbracket 1, r \rrbracket$) and $\trg(e_r) \notin \{\src(e_i), i \in \llbracket 2, r \rrbracket\}$. If $c = e_1,...,e_r$ and $c' = e_{r+1},...,e_n$ are two edge paths that satisfy $\trg(e_r) = \src(e_{r+1})$, one denotes by $c*c'$ their \textbf{concatenation} $e_1,...,e_r,e_{r+1},...e_n$.
	
	The set of vertices of a graph $\mathcal{G}$ it equipped with a distance \[d_{\mathcal{G}}(x,y) = \inf \{r \in \mathbb{N} \cup \{\infty\} \mid \text{there exists an edge path $e_1,..., e_r$ connecting $x$ to $y$} \}\]
	(where $d_{\mathcal{G}}(x,y) = \infty$ if such path does not exist). 
	Let $\mathcal{G}$, $\mathcal{G}'$ be graphs. The graph $\mathcal{G}$ is a \textbf{subgraph} of $\mathcal{G}'$ if $\mathcal{V}(\mathcal{G}) \subseteq \mathcal{V}(\mathcal{G}')$, $\mathcal{E}(\mathcal{G}) \subseteq \mathcal{E}(\mathcal{G}')$ and the source and target maps and the fixed-point free involution of $\mathcal{G}$ and $\mathcal{G}'$ coincide on $\mathcal{E}(\mathcal{G})$. The \textbf{induced subgraph} of $\mathcal{G'}$ by a set of vertices $V \subseteq \mathcal{V}\left(\mathcal{G}'\right)$ is the subgraph of $\mathcal{G}'$ defined by its set of vertices $V$ and its set of edges $\left\{E \in \mathcal{E}\left(\mathcal{G}\right), (\src(E), \trg(E)) \in V^2 \right\}$. Given a graph $\mathcal{G}$ and a vertex $x \in \mathcal{V}\left(\mathcal{G}\right)$, the ball of center $x$ and radius $R > 0$ (denoted by $B_{\mathcal{G}}(x, R)$) is the subgraph induced by the set of vertices at distance at most $R$ from $x$ in $\mathcal{G}$. 
	
	The graph $\mathcal{G}$ is said to be \textbf{connected} if for any $(v,w) \in \mathcal{V}\left(\mathcal{G}\right)^2$, there exists an edge path connecting $v$ to $w$. The \textbf{connected component} of $v \in \mathcal{V}\left(\mathcal{G}\right)$ is the largest connected subgraph of $\mathcal{G}$ containing $v$. The \textbf{half-graph} of an edge $e$ of $\mathcal{G}$ is the connected component containing $\trg(e)$ of the subgraph of $\mathcal{G}$ defined by its set of vertices $\mathcal{V}(\mathcal{G})$ and its set of edges $\mathcal{E}\left(\mathcal{G}\right) \setminus \{e, \overline{e}\}$. The graph $\mathcal{G}$ is a \textbf{tree} if it connected and does not contain any reduced cycle.  It is called a \textbf{forest} if all its connected components are trees.
	
	Let $\mathcal{G}$ be a connected graph and $\left(\mathcal{G}^{(i)}\right)_{i \in I}$ be a collection of connected disjoint subgraphs of $\mathcal{G}$. The \textbf{quotient} $\mathcal{G}/\left(\mathcal{G}^{(i)}, i \in I\right)$ is the graph defined by: \begin{itemize}
		\item its set of vertices is $\left(\mathcal{V}\left(\mathcal{G}\right) \setminus \sqcup_{i \in I}\mathcal{V}\left(\mathcal{G}^{(i)}\right)\right) \sqcup I$;
		\item its set of positive edges is $\mathcal{E}^+\left(\mathcal{G}\right) \setminus \sqcup_{i \in I}\mathcal{E}^+\left(\mathcal{G}^{(i)}\right)$;
		\item its set of negative edges is $\mathcal{E}^-\left(\mathcal{G}\right) \setminus \sqcup_{i \in I}\mathcal{E}^-\left(\mathcal{G}^{(i)}\right)$;
		\item for every $E \in \mathcal{E}\left(\mathcal{G}\right) \setminus \sqcup_{i \in I}\mathcal{E}\left(\mathcal{G}^{(i)}\right)$ the source map is defined by $\src(E)$ if $\src(E) \notin \sqcup_{i \in I}\mathcal{V}\left(\mathcal{G}^{(i)}\right)$ and by $i$ if $\src(E) \in \mathcal{V}\left(\mathcal{G}^{(i)}\right)$;
		\item for every $E \in \mathcal{E}\left(\mathcal{G}\right) \setminus \sqcup_{i \in I}\mathcal{E}\left(\mathcal{G}^{(i)}\right)$ the target map is defined by $\trg(E)$ if $\trg(E) \notin \sqcup_{i \in I}\mathcal{V}\left(\mathcal{G}^{(i)}\right)$ and by $i$ if $\trg(E) \in \mathcal{V}\left(\mathcal{G}^{(i)}\right)$.
	\end{itemize}

	\subsubsection{Schreier graphs}\label{schreier}
	
	Let $\Gamma$ be a countable group. One has a correspondence between \begin{itemize}
		\item subgroups of $\Gamma$ and isomorphism classes of pointed transitive right actions $(X,x_0) \curvearrowleft \Gamma$; 
		\item conjugacy classes of subgroups of $\Gamma$ and isomorphism classes of transitive right actions $X \curvearrowleft \Gamma$.
	\end{itemize}
	It is given by the bijection $f: \begin{array}{ccccc}
		\Sub(\Gamma) & \to & \{\text{pointed transitive right actions of $\Gamma$}\} \\
		\Lambda & \mapsto & (\Lambda \backslash \Gamma, \Lambda) \curvearrowleft \Gamma \\
	\end{array}$
	whose inverse is given by $g: \begin{array}{ccccc}
		\{\text{pointed transitive right actions of $\Gamma$}\} & \to & \Sub(\Gamma) \\
		(X,x_0) \curvearrowleft \Gamma & \mapsto & \Stab(x_0) \\
	\end{array}$.
	Changing the base point of a transitive $\Gamma$-action $\alpha$ on a countable set $(X,x_0)$ amounts to conjugating the subgroup represented by $\alpha$ and $x_0$.
	
	Given any generating set $S$ of $\Gamma$, one can associate a graph $\Sch(\alpha)$ called \textbf{Schreier graph} to any transitive right action $X \curvearrowleft^{\alpha} \Gamma$ (hence to any subgroup of $\Gamma$ by the above correspondence): its vertex set is $X$ and for any $x \in X$, for any $s \in S$, define a positive edge labeled $s$ from $x$ to $x\alpha(s)$. It is pointed at $x_0$ if $\alpha$ is pointed at $x_0$. Two Schreier graphs $\Sch(\alpha,v)$ and $\Sch(\beta,v')$ are isomorphic (as labeled graphs) if and only if the subgroups represented by $\alpha$ and $\beta$ (with respect to the points $v$ and $v'$) are conjugated. For any $R > 0$, we denote $\Sch(\alpha,v) \simeq_R \Sch(\beta,v')$ if the $R$-balls around $v$ and $v'$ of the Schreier graphs of $\alpha$ and $\beta$ are isomorphic as labeled graphs. 
	
	\begin{remark}
	    One could also have considered left actions instead of right actions. The reason why we deal with right actions is that the left action of $\Gamma$ on a right Cayley graph induces an isometry of the graph.
	\end{remark}
	
	\subsection{Elements of Bass-Serre theory}\label{bassserre}
	
	In this section, we recall the fundamentals of Bass-Serre theory. We refer to \cite{serre} for more details. 
	
	Let $\mathcal{H}$ be an oriented graph equipped with a collection of \textbf{vertex groups} $G_v$ for $v \in \mathcal{V}\left(\mathcal{G}\right)$, a collection of \textbf{edge groups} $G_e$ for $e \in \mathcal{E}\left(\mathcal{H}\right)$ such that $G_e = G_{\overline{e}}$ for every edge $e$ and, for every $e \in \mathcal{E}\left(\mathcal{H}\right)$ and $\var \in \{\src, \trg\}$, injective homomorphisms $\iota_{e,\var}: G_e \hookrightarrow G_{\var(e)}$ such that $\iota_{e, \src} = \iota_{\overline{e}, \trg}$. This data is called a \textbf{graph of groups}.
	We associate a group to $\mathcal{H}$ called the \textbf{fundamental group} of $\mathcal{H}$, defined by the following procedure: fix a spanning tree $T$ in $\mathcal{H}$. Denote by $\{t_e, e \in \mathcal{E}\left(\mathcal{H}\right)\}$ a generating set of the free group $\mathbb{F}_{|\mathcal{E}\left(\mathcal{H}\right)|}$ of rank $|\mathcal{E}\left(\mathcal{H}\right)|$ and define \begin{align*} \Gamma = \left(*_{v \in \mathcal{V}\left(\mathcal{H}\right)}G_v * \mathbb{F}_{|\mathcal{E}\left(\mathcal{H}\right)|}\right) / \left\llangle \left(t_e^{-1}\iota_{e,\src}(g)t_e\iota_{e, \trg}(g)^{-1}\right)_{(e,g) \in \mathcal{E}\left(\mathcal{H}\right) \times G_e},  (t_et_{\overline{e}})_{e \in \mathcal{E}\left(\mathcal{H}\right)}, (t_e)_{e \in \mathcal{E}\left(T\right)} \right\rrangle \end{align*}
	Then, there exists a unique oriented tree $\mathcal{A}$, called the \textbf{Bass-Serre tree} of the graph of groups on which $\Gamma$ acts without inversion with quotient $\mathcal{H}$ and such that there exist sections $\mathcal{V}\left(\mathcal{H}\right) \to \mathcal{V}\left(\mathcal{A}\right)$ and $\mathcal{E}\left(\mathcal{H}\right) \to \mathcal{E}\left(\mathcal{A}\right)$ (which we denote by $v \mapsto \sigma{v}$ and $e \mapsto \sigma{e}$, respectively) of the projection $\pi : \mathcal{A} \to \mathcal{H}$ that satisfies the following conditions: \begin{itemize}
		\item $\Stab(\sigma{v}) = G_v \ \forall v \in \mathcal{V}(\mathcal{G})$
		\item $\Stab(\sigma{e}) = G_e \ \forall e \in \mathcal{E}(\mathcal{G})$
	\end{itemize}
	
	Moreover, the isomorphism class of $\Gamma$ does not depend on the choice of the spanning tree $T$ in $\mathcal{H}$ and $\mathcal{A}$ is unique up to unique isomorphism. 
	
	Conversely, any group $\Gamma$ acting on an oriented tree $\mathcal{A}$ is the fundamental group of the graph of groups whose underlying graph is the quotient $\mathcal{H} = \Gamma \backslash \mathcal{A}$ and whose vertex and edge groups are defined as follows: let $T$ be a spanning tree of $\mathcal{H}$. Let $\sigma : T \to \mathcal{A}$ be a lifting of $T$, which we extend to a section $\sigma : \mathcal{E}\left(\mathcal{H}\right) \to \mathcal{E}\left(\mathcal{A}\right)$ satisfying $\sigma(\overline{e}) = \overline{\sigma(e)}$ and $\trg\left(\sigma(e)\right) = \sigma\left(\trg(e)\right)$ for every $e \in \mathcal{E}^+\left(\mathcal{H}\right)$. Then define the vertex and edge groups by:\begin{itemize}
		\item $G_v = \Stab(\sigma(v))$ for every $v \in \mathcal{V}\left(\mathcal{H}\right)$;
		\item $G_e = \Stab(\sigma(e))$ for every $e \in \mathcal{E}\left(\mathcal{H}\right)$.
	\end{itemize} 
	and the injections $\iota_{e, \trg} : G_e \hookrightarrow G_{\trg(e)}$ as in \cite[Chapter 1, Section 5.4]{serre}. 
	
	Let $\Gamma$ be the fundamental group of a graph of groups $\mathcal{H}$ and let $\mathcal{A}$ be its Bass-Serre tree. Any subgroup $\Lambda$ of $\Gamma$ acts without inversion on the Bass-Serre tree $\mathcal{A}$ of $\Gamma$. Hence $\Lambda$ is the fundamental group of the graph of groups with underlying graph $\Lambda \backslash \mathcal{A}$ and vertex and edge groups defined as previously (hence any vertex (\textit{resp.} edge) group is an intersection of $\Lambda$ with some conjugate of a vertex (\textit{resp.} edge) group of $\mathcal{H}$). 
	
	\subsection{Generalized Baumslag-Solitar groups}\label{GBS group}
	
	A GBS group is the fundamental group of a finite graph of groups with vertex and edge stabilizers infinite cyclic. Equivalently, these groups are obtained by an iteration of HNN-extensions and amalgamated free products of $\mathbb{Z}$ over $\mathbb{Z}$ starting from $\mathbb{Z}$. \\
	As injective morphisms $\mathbb{Z} \to \mathbb{Z}$ are fully determined by the image in $\mathbb{Z} \setminus \{0\}$ of $1$, Bass-Serre theory tells us that a GBS group $\Gamma$ can be represented by an oriented graph $\mathcal{H}$ endowed with a function which associates an integer to each half-edge:  $$k: \begin{array}{ccccc}
		\mathcal{E}\left(\mathcal{H}\right) \times \{\src,\trg\} & \to & \mathbb{Z} \setminus \{0\} \\
		(e,\var) & \mapsto & k_{e,\var} \\
	\end{array}$$ such that for every edge $e$, one has $k_{e, \src} = k_{\overline{e}, \trg}$. 
	From now, we fix a spanning tree $T$ of $\mathcal{H}$. As a consequence of Bass-Serre theory, the group $\Gamma$ is defined by the following presentation: 
	
	\begin{align*} \Gamma \simeq \Bigg\langle (a_v)_{v \in \mathcal{V}\left(\mathcal{H}\right)}, (t_e)_{e \in \mathcal{E}\left(\mathcal{H}\right)} &\mid \left(t_e^{-1}a_{\src(e)}^{k_{e,\src}}t_e a_{\trg(e)}^{-k_{e,\trg}}  \text{\ and \ } t_et_{\overline{e}} \ \text{ \ for all \ } e \in \mathcal{E}\left(\mathcal{H}\right)\right) \end{align*} \begin{equation}\label{pres1}\text{ \ and \ } \left(t_e \text{ \ for all \ } e \in \mathcal{E}(T)\right)  \Bigg\rangle. \end{equation} 
	
		Let us give an overview of the construction of the Bass-Serre tree for GBS groups. The Bass-Serre tree can be thought of as a ``blown up and shrunk Cayley graph'': its vertices are the cosets of vertex groups, and its edges are the cosets of edge groups. More precisely, let $Y$ be the graph defined by \begin{itemize}
		\item its vertex set \[\mathcal{V}(Y) = \Gamma \times \mathcal{V}\left(\mathcal{H}\right);\]
		\item its positive edge set \[\mathcal{E}^+(Y) = \Gamma \times \{a_v, e \vert v \in \mathcal{V}\left(\mathcal{H}\right), e \in \mathcal{E}^+\left(\mathcal{H}\right) \};\]
		\item for every $\gamma \in \Gamma$ and $v \in \mathcal{V}\left(\mathcal{H}\right)$: \begin{itemize}
			\item $\src(\gamma, a_v) = (\gamma, v)$;
			\item $\trg(\gamma, a_v) = (\gamma a_v, v)$;
		\end{itemize}
		\item for every $\gamma \in \Gamma$ and $e \in \mathcal{E}^+\left(T\right)$: \begin{itemize}
			\item $\src(\gamma, e) = (\gamma, \src(e))$;
			\item $\trg(\gamma, e) = (\gamma, \trg(e))$;
		\end{itemize}
		\item for every $\gamma \in \Gamma$ and $e \in \mathcal{E}^+\left(\mathcal{H}\right) \setminus \mathcal{E}^+\left(T\right)$: \begin{itemize}
			\item $\src(\gamma, e) = (\gamma, \src(e))$;
			\item $\trg(\gamma, e) = (\gamma t_e, \trg(e))$.
		\end{itemize}
	\end{itemize}
	The graph $Y$ is obtained by ``blowing up'' the Cayley graph of $\Gamma$ with respect to the generating set $S = \{a_v, v \in \mathcal{V}\left(\mathcal{H}\right), t_e, e \in \mathcal{E}\left(\mathcal{H}\right) \setminus \mathcal{E}(T) \}$: shrinking all the edges of $Y$ with source $(\gamma, \src(e))$ and target $(\gamma, \trg(e))$ (for any $\gamma \in \Gamma$ and $e \in \mathcal{E}\left(T\right)$) leads to the Cayley graph of $\Gamma$ with respect to $S$. The group $\Gamma$ acts on the graph $Y$ by the (left) action: $\gamma_1 \cdot (\gamma_2,v) = (\gamma_1 \gamma_2, v)$, and the group $\mathbb{Z}$ acts on the set of vertices of $Y$ by the right action: for any $(\gamma, v)$ and $n \in \mathbb{Z}$, define $(\gamma, v)\cdot n = (\gamma a_v^{n}, v)$. The graph $Y / \mathbb{Z}$ is precisely the Bass-Serre tree of $\Gamma$. The fact that this is a tree results from the normal form theorem (\textit{cf.} \cite[Section 5.2, Corollary 3 of Theorem 11]{serre}). In particular, the quotient $\Gamma \backslash (Y / \mathbb{Z})$ is (up to labelling the vertices and edges) the graph of groups $\mathcal{H}$.
	
	The elements $a_v$ are called \textbf{vertex generators} and the elements $t_e$ are called \textbf{edge generators}. Let $c = e_1, ..., e_r$ be an edge path of $\mathcal{H}$. A \textbf{word} of \textbf{type} $c$ is a couple \[\left(c, \mu\right) = \left((e_1,...,e_r), \left(a_{\src(e_1)}^{k_1}, a_{\src(e_2)}^{k_2}, ..., a_{\src(e_r)}^{k_r}, a_{\trg(e_r)}^{k_{r+1}}\right)\right).\] 
	A \textbf{prefix} of $(c, \mu)$ is a word \[\left((e_1,...,e_i), \left(a_{\src(e_1)}^{k_1}, a_{\src(e_2)}^{k_2}, ..., a_{\src(e_i)}^{k_i}, a_{\trg(e_i)}^{k_{i+1}}\right)\right)\]
	for some $i \in \llbracket 1, r\rrbracket$. \\
	The \textbf{length} of $(c, \mu)$ is the integer $r$. The associated element of $\Gamma$ is \[\left|c, \mu\right| = a_{\src(e_1)}^{k_1} s_1 a_{\src(e_2)}^{k_2} \ldots a_{\src(e_r)}^{k_r} s_r a_{\trg(e_r)}^{k_{r+1}}\](where $s_i = t_{e_i}$ if $e_i \notin T$ and $s_i = 1$ otherwise). 
	The word $(c, \mu)$ is said to be \textbf{reduced} if \begin{itemize}
		\item either $r=0$ (\textit{i.e.} the edge path is empty) and $k_1 \neq 0$;
		\item or for every $i \in \llbracket 1, r-1 \rrbracket$ such that $\overline{e_{i+1}} = e_i$, the integer $k_{i+1}$ is not divisible by $k_{e_i, \trg}$.
	\end{itemize}
	Recall (\textit{cf.} \cite[Chapter 1, Section 5.2]{serre}) that a reduced word leads to a non-trivial element of $\Gamma$. If $(c, \mu) = \left((e_1, ..., e_r), (a_0, ..., a_r)\right)$ and $(d, \lambda) = \left((f_1, ..., f_s), (b_0, ..., b_s)\right)$ are two words satisfying $\trg(e_r) = \src(f_1)$, one defines their \textbf{concatenation} by \[(c, \mu)*(d, \nu) = \left((e_1, ..., e_r, f_1, ..., f_s), (a_0, ...,a_{r-1}, a_rb_0, b_1, ..., b_s)\right).\] 
	As explained in \cite[Section 2]{levitt}, any GBS group can be represented by a \textbf{reduced} graph of groups (which is not necessarily unique), that is to say, we will assume that the only edges $e$ such that $k_{e, \src} = \pm 1$ or $k_{e,\trg} = \pm 1$ are loops. 

    Examples of GBS groups include Baumslag-Solitar groups $\BS(m,n)$. The presentation associated to the graph of groups in Figure \ref{graphebsmn} is the one given in Equation \ref{bsmn}.
\begin{figure}[ht]
				\center
				\begin{tikzpicture}
					\node[draw,circle,fill=gray!50] (a) at (-3,0) {};
					\draw[>=latex, directed] (a) to [out=135,in=45,looseness=20] node[very near start, below left]{$m$} node[very near end, below right]{$n$} (a);
				\end{tikzpicture}
				\caption{Standard graph of groups of $\BS(m,n)$.}
				\label{graphebsmn}
			\end{figure}

    Other examples of GBS groups include fundamental groups of torus knots $G(p,q)$, \textit{i.e.} the fundamental group of the complement of a certain embedded circle in $\mathbb{R}^3$. Each of those groups is defined by two integer coprime parameters $(p,q)$ - the amount of windings of the knot around the longitudinal direction and around the meridional direction, respectively (see Figure \ref{knot}) - and the graph of groups that defines its standard presentation \begin{equation}\left\langle a, b \mid a^p = b^q\right\rangle\end{equation} is given in Figure \ref{graphegpq}.

    \begin{figure}[ht]
				\center
				\begin{tikzpicture}
					\node[draw,circle,fill=gray!50] (a) at (-3,0) {};
                    \node[draw,circle,fill=gray!50] (b) at (0,0) {};
					\draw[>=latex, directed] (a) to node[very near start, below left]{$p$} node[very near end, below right]{$q$} (b);
				\end{tikzpicture}
				\caption{Standard graph of groups of $G(p,q)$.}
				\label{graphegpq}
			\end{figure}

\begin{figure}[ht]
\center
\begin{tikzpicture}
\def\fst{(0,0) ellipse (5 and 3)}
\def\snd{(0,0.5) ellipse (2 and 0.5)}
\fill[cyan] \fst ;
\fill[white] \snd ;
\draw[>=latex, directed, very thick, color=red] (2.5,-2.6) to[bend left=60] (-2.05, 0.5);
\draw[>=latex, directed, dashed, very thick, color=red] (-2, 0.6) to[bend left=30] (-0.1,3);
\draw[>=latex, directed, very thick, color=red] (0.05, 2.97) to[bend left=40] (2,0.4);
\draw[>=latex, directed, dashed, very thick, color=red] (2,0.3) to[bend left=50] (-2.5,-2.6);
\draw[>=latex, directed, very thick, color=red] (-2.5, -2.6) to[bend left=75] (-2,1.5);
\draw[>=latex, directed, very thick, color=red] (-2,1.52) to[bend left=25] (1.9,0.7);
\draw[>=latex, directed, dashed, very thick, color=red] (2,0.5) to[bend left=30] (2.5,-2.6);
\draw[>=latex, very thick, color=blue] (-2,0.4) to[bend left=20] (-2.4,0.8);
\draw[>=latex, very thick, color=blue] (2,0.4) to[bend right=20] (2.2,0.8);
\end{tikzpicture}
\caption{$G(2,3)$ is the fundamental group of the complement of the trefoil in $\mathbb{R}^3$}
\label{knot}
\end{figure}

	Notice that a reduced graph of groups that represents a non-amenable GBS group $\Gamma$ is neither a loop one of whose labels is equal to $\pm 1$ nor a segment with both labels equal to $\pm 2$: in this last case, a presentation of $\Gamma$ is given by $\Gamma \simeq \langle a, b \mid a^2 = b^2 \rangle$. Hence the group $\Gamma$ is isomorphic to the fundamental group of the Klein Bottle $\BS(1, -1) \simeq \langle \tau, \beta \mid \tau \beta \tau^{-1} = \beta^{-1} \rangle$ \textit{via} the following isomorphism: 
	\[\begin{array}{ccccc}
		\Gamma & \to & \BS(1, -1) \\
		a & \mapsto & \tau \\
		b & \mapsto & \beta \tau
	\end{array}\]
	whose inverse is given by 
	\[\begin{array}{ccccc}
		\BS(1, -1) & \to & \Gamma \\
		\tau & \mapsto & a \\
		\beta & \mapsto & ba^{-1}
	\end{array}.\]
	Conversely, any GBS group defined by a reduced graph of groups which is neither a loop one of whose labels is equal to $\pm 1$ nor a segment with both labels equal to $\pm 2$ contains a free group, thus is non-amenable.
	
	Let us fix a reduced graph of groups $\mathcal{H}$ defining $\Gamma$ and let $\mathcal{A}$ be the associated Bass-Serre tree. An \textbf{elliptic} element is an element $\gamma \in \Gamma$ that fixes a vertex in $\mathcal{A}$. It is a power of a conjugate of a vertex generator $a_v$ for some $v \in \mathcal{V}\left(\mathcal{H}\right)$. If $\gamma \in \Gamma$ is not elliptic, $\gamma$ is said \textbf{hyperbolic}. For a non-amenable GBS group $\Gamma$, the elliptic elements can be defined intrinsically:
	\begin{proposition}\label{ell}
		Let $\Gamma$ be a non-amenable GBS group and let $g \in \Gamma$. The following are equivalent: \begin{enumerate}
			\item $g$ is elliptic (for the action of $\Gamma$ on its Bass-Serre tree);
			\item for every $\gamma \in \Gamma$, the elements $g$ and $\gamma g \gamma^{-1}$ are commensurable.
		\end{enumerate}
	\end{proposition}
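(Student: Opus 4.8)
The plan is to treat the two implications separately, proving $(1)\Rightarrow(2)$ by a direct commensurability argument that works for every GBS group, and $(2)\Rightarrow(1)$ by contraposition, the non-amenability hypothesis intervening only in the latter. Throughout, I read ``commensurable'' for two elements $g,h\in\Gamma$ as commensurability of the cyclic groups $\langle g\rangle$ and $\langle h\rangle$, which for infinite cyclic groups amounts to the existence of nonzero integers $p,q$ with $g^p=h^q$.

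For $(1)\Rightarrow(2)$, first I would record that all nontrivial vertex stabilizers of the action $\Gamma\curvearrowright\mathcal{A}$ are pairwise commensurable as subgroups of $\Gamma$. Indeed, for an edge $e$ of $\mathcal{A}$ joining vertices $x$ and $y$ one has $\Stab(e)=\Stab(x)\cap\Stab(y)$, and since the edge group injects into each vertex group as a finite-index subgroup (of index $|k_{e,\var}|$), $\Stab(x)$ and $\Stab(y)$ are commensurable; connectedness of $\mathcal{A}$ together with transitivity of subgroup-commensurability then propagates this to every pair of vertices. Now if $g$ is elliptic and nontrivial it lies in some $\Stab(x)\cong\mathbb{Z}$, so $\langle g\rangle$ has finite index in $\Stab(x)$; for any $\gamma$, the conjugate $\langle\gamma g\gamma^{-1}\rangle=\gamma\langle g\rangle\gamma^{-1}$ has finite index in $\gamma\,\Stab(x)\,\gamma^{-1}=\Stab(\gamma x)$. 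Since $\Stab(x)$ and $\Stab(\gamma x)$ are commensurable, chaining $\langle g\rangle\sim\Stab(x)\sim\Stab(\gamma x)\sim\langle\gamma g\gamma^{-1}\rangle$ yields $(2)$ (the case $g=1$ being trivial).

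For $(2)\Rightarrow(1)$, I would argue by contraposition: assume $g$ is hyperbolic and show $(2)$ fails. Let $L_g$ denote the axis of $g$ and $\ell(g)>0$ its translation length. For any $\gamma$, the element $\gamma g\gamma^{-1}$ is hyperbolic with axis $\gamma L_g$. Suppose, toward a contradiction, that $(2)$ holds, so for every $\gamma$ there are nonzero $p,q$ with $g^p=(\gamma g\gamma^{-1})^q=\gamma g^q\gamma^{-1}$. The left-hand side is hyperbolic with axis $L_g$, the right-hand side hyperbolic with axis $\gamma L_g$; being equal, they share a single axis, so $\gamma L_g=L_g$. As $\gamma$ was arbitrary, the line $L_g$ is invariant under all of $\Gamma$.

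It remains to derive a contradiction from the $\Gamma$-invariance of $L_g$, and this is the step I expect to be the crux. This is exactly the configuration already analysed in the remark following Theorem~\ref{kernel}: a $\Gamma$-invariant axis of a hyperbolic element forces, through Bass-Serre theory, $\Gamma\simeq\BS(1,\varepsilon)$ for some $\varepsilon\in\{1,-1\}$, hence $\Gamma$ amenable. Self-containedly, the pointwise stabilizer $K$ of $L_g$ is normal (because $\Gamma$ preserves $L_g$ setwise), fixes a vertex and is therefore cyclic, while $\Gamma/K$ acts faithfully on the line $L_g$ and so embeds into $\mathrm{Aut}(L_g)$, an infinite dihedral group; thus $\Gamma$ is cyclic-by-(virtually cyclic), hence amenable, contradicting the hypothesis. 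Therefore $g$ must be elliptic, which completes the proof.
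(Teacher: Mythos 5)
Your proof is correct and follows the same overall structure as the paper's. For $(1)\Rightarrow(2)$ both arguments reduce to the pairwise commensurability of vertex stabilizers along the finite path joining two vertices of $\mathcal{A}$: the paper phrases this as the non-triviality of $\bigcap_i \Stab(e_i)$ obtained by induction on the path length, you phrase it as transitivity of commensurability across consecutive vertex stabilizers via $\Stab(e)=\Stab(x)\cap\Stab(y)$; these are the same observation. For $(2)\Rightarrow(1)$ both extract a $\Gamma$-invariant line from a hyperbolic element commensurable with all its conjugates. The only genuine divergence is the last step: the paper analyses the quotient graph of groups $\Gamma\backslash L$ and identifies $\Gamma$ with $\BS(1,\varepsilon)$, whereas you pass to the exact sequence $1\to K\to\Gamma\to\Gamma/K\to 1$ with $K$ the (normal, cyclic) pointwise stabilizer of the line and $\Gamma/K$ embedded in the infinite dihedral group, and conclude amenability as a cyclic-by-(virtually cyclic) extension. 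Your version is marginally more robust, since it does not rely on recognizing the precise labels of the quotient graph of groups; both yield the required contradiction with non-amenability.
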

	
	\begin{proof} 
		$(1) \implies (2)$ remains valid without assuming that $\Gamma$ is amenable. To prove this implication, we show that any two non-trivial elliptic elements are commensurable. As any two non-trivial subgroups of $\mathbb{Z}$ are commensurable, an induction on the length $r$ of an edge path $E_1, ..., E_r$ of $\mathcal{A}$ shows that \[\bigcap_{i=1}^r\Stab(E_i) \neq \{1\} \text{ \ for every edge path $E_1, ..., E_r$ in $\mathcal{A}$.}\] 
		In particular, for any vertices $V, W \in \mathcal{V}(\mathcal{A})$, denoting by $E_1, ..., E_r$ the unique reduced edge path connecting $V$ to $W$ in $\mathcal{A}$, the group $\bigcap_{i=1}^r\Stab(E_i)$ is a non-trivial subgroup of both $\Stab(V)$ and $\Stab(W)$, so has finite index in both $\Stab(V)$ and $\Stab(W)$. This proves the first implication. 
		
		We argue by contradiction to prove $(2) \implies (1)$. If $g \in \Gamma$ is hyperbolic and commensurable to its conjugates, then $g$ and $\gamma g \gamma^{-1}$ share the same axis $L$ for any $\gamma \in \Gamma$, so $\Gamma$ stabilizes $L$. The quotient graph of groups $\Gamma \backslash L$ being reduced, the action of $\Gamma$ on $L$ has either one or two orbits of vertices. Thus, Bass-Serre theory tells us that $\Gamma$ is the fundamental group of a graph of groups defined either by a loop labeled $(1, \pm 1)$ if the action of $\Gamma$ on $L$ has a single orbit of vertices or by a segment labeled $(2, \pm 2)$ otherwise. Hence $\Gamma$ is 
		isomorphic to $\BS(1, \varepsilon)$ for some $\varepsilon \in \{1, -1\}$, so is amenable.  
	\end{proof}
	
	For any GBS group $\Gamma$, one can define the \textbf{modular homomorphism} $\Delta_{\Gamma}: \Gamma \to \mathbb{Q}^*$ as follows: fix a non-trivial elliptic element $a \in \Gamma$. For every $\gamma \in \Gamma$, there exist non-zero integers $m,n \in \mathbb{Z}$ such that $\gamma a^m \gamma^{-1} = a^n$. One defines $\Delta_{\Gamma}(\gamma) = \frac{m}{n}$; we easily check that it is well-defined and that it does not depend on the choice of the elliptic element $a$. The group $\Gamma$ is \textbf{unimodular} if $\image(\Delta_{\Gamma}) \subseteq \{1,-1\}$. This condition can be rephrased in terms of the labels of the graph $\mathcal{H}$: the group $\Gamma$ is unimodular if and only if for every cycle $c$, one has $|\prod_{e \in c}k_{e,\src}| = |\prod_{e \in c}k_{e,\trg}|$ (in particular, any GBS group which is defined by a tree of groups is unimodular). One has the following characterization of unimodularity (\textit{cf.} \cite[Section 2]{levitt}): a GBS group $\Gamma$ is unimodular if and only if one of the following equivalent conditions holds: \begin{itemize}
		\item $\image(\Delta_{\Gamma}) \subseteq \{1,-1\}$;
		\item $\Gamma$ contains an infinite cyclic normal subgroup;
		\item $\Gamma$ is virtually isomorphic to $\mathbb{F}_n \times \mathbb{Z}$ for some $n \geq 1$.  
	\end{itemize}
	
	\subsection{Space of subgroups and perfect kernel}
	
	Given a countable group $\Gamma$, we denote by $\Sub(\Gamma)$ its space of subgroups. By using indicator functions, one can embed $\Sub(\Gamma)$ in $\{0,1\}^{\Gamma}$. Equipped with the product topology, $\{0,1\}^{\Gamma}$ is a totally disconnected compact metrizable space of which $\Sub(\Gamma)$ is a closed subspace. An explicit basis of open sets is given by the following clopen sets: \[\mathcal{V}(O,I) = \{\Lambda \in \Sub(\Gamma), \Lambda \cap O = \emptyset \text{ \ and \ } I \subseteq \Lambda  \}\]
	where $O$ and $I$ are finite subsets of $\Gamma$. In other words, a sequence of subgroups $\Lambda_n$ converges to a subgroup $\Lambda$ if and only if \begin{itemize}
		\item for every $\lambda \in \Lambda$, there exists $n_0 \in \mathbb{N}$, for all $n \geq n_0$, $\lambda \in \Lambda_n$. 
		\item for every $\lambda \notin \Lambda$, there exists $n_0 \in \mathbb{N}$, for all $n \geq n_0$, $\lambda \notin \Lambda_n$.
	\end{itemize}
	
	In the formalism of Schreier graphs, given a finite generating set $S$ of $\Gamma$, a basis of neighborhoods of a pointed transitive right action $[\alpha,v]$: $(X,v) \curvearrowleft \Gamma$ is given by \[V_R = \{[\beta,v'], \Sch(\beta,v') \simeq_R \Sch(\alpha, v)\}.\]
	
	Applying Cantor-Bendixson theorem (see \cite[Chapter 1, Section 6]{Kechris} for instance) to the Polish space $\Sub(\Gamma)$ leads to a unique decomposition $\Sub(\Gamma) = K \sqcup D$ where $D$ is countable and $K$ is a closed subspace of $\Sub(\Gamma)$ without isolated points. The set $K$ is called the \textbf{perfect kernel} of $\Gamma$ and denoted by $\mathcal{K}(\Gamma)$. The perfect kernel of $\Gamma$ is the largest closed subset of $\Sub(\Gamma)$ without isolated points. It is also the set of subgroups all of whose neighborhoods are uncountable. \newline
	\newline
	From now we fix a GBS group $\Gamma$ defined by a reduced graph of groups $\mathcal{H}$ and a spanning tree $T$ in $\mathcal{H}$. For every edge $e \in \mathcal{E}\left(\mathcal{H}\right)$, we denote by $k_{e,\src}$ (\textit{resp.} $k_{e,\trg}$) the label of the half-edge of $e$ containing $\src(e)$ (\textit{resp.} $\trg(e)$). 
	A path $e_1,...,e_r$ in $\mathcal{H}$ is said to be \textbf{labeled} $(k_1,l_1),...,(k_r,l_r)$ if $k_{e_i,\src} = k_i$ and $k_{e_i,\trg} = l_i$ for any $i \in \llbracket 1,r \rrbracket$. For each vertex $s \in \mathcal{V}\left(\mathcal{H}\right)$, choose a generator $a_s$ of the vertex group $G_s$ and for every $e \notin T$ denote by $t_e$ the associated edge generator in the presentation \eqref{pres1} of $\Gamma$ defined by $T$. We denote by $\mathcal{A}$ the associated Bass-Serre tree, $\pi: \mathcal{A} \to \mathcal{H} = \Gamma \backslash \mathcal{A}$ the projection and a lifting of $T$ which we extend to a section $\sigma : \mathcal{E}\left(\mathcal{H}\right) \to \mathcal{E}\left(\mathcal{A}\right)$ satisfying
	\begin{itemize}
		\item $\sigma(\overline{e}) = \overline{\sigma(e)}$ for every $e \in \mathcal{E}^+\left(\mathcal{H}\right)$;
		\item $\trg\left(\sigma(e)\right) = \sigma\left(\trg(e)\right)$ for every $e \in \mathcal{E}^+\left(\mathcal{H}\right)$;
		\item $\Stab\left(\sigma(v)\right) = \langle a_v \rangle$ for every $v \in \mathcal{V}\left(\mathcal{H}\right)$;
		\item $\Stab\left(\sigma(e)\right) = \left\langle a_{\trg(e)}^{k_{e, \trg}} \right\rangle$ for every $e \in \mathcal{E}^+\left(\mathcal{H}\right)$.
	\end{itemize}
	
\section{\texorpdfstring{From an $\mathcal{H}$-preaction to an $\mathcal{H}$-graph}{From an H-preaction to an H-graph}}\label{corr}	
	The goal of this section is to introduce some tools to approximate a $\Gamma$-action by perturbing a graph associated to it far from the base point. To achieve this, we introduce the notion of \textbf{$\mathcal{H}$-preaction}, that takes the defining relations of $\Gamma$ into account. To any $\mathcal{H}$-preaction will be associated an \textbf{$\mathcal{H}$-graph} (that is related to its Schreier graph). It is a labeled graph whose labels and degrees satisfy some arithmetical conditions. The inspiration comes from the notion of core graphs in the case of free groups: to any subgroup $H$ of $\mathbb{F}_r$ is associated a pointed graph endowed with an immersion to the bouquet of $r$ circles. Perturbing this graph very far from the origin amounts to building a subgroup of $\mathbb{F}_r$ that is very close from $H$. The new notions we introduce are generalizations of the notions of preactions and $(m,n)$-graphs introduced in \cite{solitar1}. The authors of \cite{hightrans} also introduced a similar notion of Bass-Serre graphs and of preactions in the case of HNN extensions and amalgamated free products. Our definition of an $\mathcal{H}$-preaction extends to any graph of GBS group $\mathcal{H}$ and depends on the choice of the spanning tree $T$ of $\mathcal{H}$.

Before entering into technical details, let us motivate the notion of $\mathcal{H}$-preactions. If one wants to perturb the Schreier graph of an $\mathbb{F}_2$-action (with respect to a basis $\{a,b\}$ of $\mathbb{F}_2$), one has a lot of freedom: we only have to ensure that each vertex has exactly one incoming edge and one outgoing edge labeled $a$ (\textit{resp.} $b$). When the group we start with is not free, nothing similar can be done in general. In the case of the fundamental group of a graph of groups, we can adapt the naive strategy that worked for free groups, but the situation is significantly more subtle. For instance, in the case of Baumslag-Solitar groups, the defining relation forces the cardinals of the $\langle b \rangle$-orbits of any Schreier graph to satisfy some arithmetical relations: for example, one $\langle b \rangle$-orbit is infinite if and only if all $\langle b \rangle$-orbits are. The idea of introducing the notion of preaction is to get rid of this difficulty by imposing that the non-saturated actions we work with have saturated $\langle b \rangle$-orbits - and the defining relation imposes that, whenever $t$ (\textit{resp.} $t^{-1}$) is defined on some point $x$, then $t$ has to be defined on $x \cdot b^n$ (\textit{resp.} $x \cdot b^m$). In the case of an arbitrary graph of GBS group, this means that all the orbits of the vertex groups are saturated, and that all the relations of the group are ``memorized'' by the preaction. Let us make this definition more precise:
	
	\begin{definition}\label{preac}
		
		An \textbf{$\mathcal{H}$-preaction} of $\Gamma$ on a set $X$ is a set of partially defined bijections $\alpha_s : \dom(\alpha_s) \to \rng(\alpha_s)$ of $X$ (for each $s \in \mathcal{V}\left(\mathcal{H}\right)$) and $\tau_e : \dom(\tau_e) \to \rng(\tau_e)$ of $X$ (for each edge $e \notin T$) satisfying the following conditions:  \begin{enumerate}\label{hpreac}
			\item For every vertex $s$ of $\mathcal{H}$, $\dom(\alpha_s) = \rng(\alpha_s)$;
			\item For every edge $e$ of $T$, for every $x \in \dom \left(\alpha_{\src(e)}\right) \cap \dom \left(\alpha_{\trg(e)} \right)$, \[x \alpha_{\src(e)}^{k_{e,\src}} = x \alpha_{\trg(e)}^{k_{e,\trg}};\]
			\item For every edge $e \in \mathcal{E}\left(\mathcal{H}\right) \setminus \mathcal{E}\left(T\right)$ \begin{itemize}
				\item $\dom(\tau_e)$ is $\alpha_{\src(e)}^{k_{e,\src}}$-invariant;
				\item $\dom(\tau_e) = \rng(\tau_e^{-1})$ and $\tau_e = \tau_{\overline{e}}^{-1}$
			%	\item $\dom(\tau_e) \subseteq \dom(\alpha_{\src(e)})$
			\end{itemize}
			and for every $x \in \dom(\tau_e)$ \[x \alpha_{\src(e)}^{k_{e,\src}}\tau_e = x \tau_e \alpha_{\trg(e)}^{k_{e,\trg}};\]
			\item \label{chem1} For every edge $e \in \mathcal{E}\left(\mathcal{H}\right) \setminus \mathcal{E}\left(T\right)$, one has \[\dom(\tau_e) \subseteq \dom(\alpha_{\src(e)});\]
		%	\item \label{chem1} For every vertex $s \in \mathcal{V}\left(\mathcal{H}\right)$ and edge $e \in \mathcal{E}\left(\mathcal{H}\right) \setminus \mathcal{E}\left(T\right)$, denoting by $e_1,...,e_n$ the unique edge path in $T$ connecting $s$ to $\src(e)$, one has, for every $i \in \llbracket 1,n \rrbracket$ and $\var \in \{\src,\trg\}$: \[\dom(\alpha_s) \cap \dom(\tau_e) \subseteq \dom(\alpha_{{\var}(e_i)});\]
			\item \label{chem2} For every vertex $v,w \in \mathcal{V}\left(\mathcal{H}\right)$, denoting by $e_1,...,e_n$ the unique edge path in $T$ connecting $v$ to $w$, one has, for every $i \in \llbracket 1,n \rrbracket$ and $\var \in \{\src, \trg\}$: \[\dom(\alpha_v) \cap \dom(\alpha_w) \subseteq \dom(\alpha_{{\var}(e_i)}).\]
		\end{enumerate}
	\end{definition}

	Let $\alpha$ be an $\mathcal{H}$-preaction on a countable set $X$ defined by a set of partial bijections $\left\{\alpha_s, \tau_e | s \in \mathcal{V}\left(\mathcal{H}\right), e \in \mathcal{E}\left(\mathcal{H}\right) \setminus \mathcal{E}(T)\right\}$. An $\mathcal{H}$-preaction $\alpha'$ on a countable set $X'$ defined by partial bijections $\left\{\alpha'_s, \tau'_e | s \in \mathcal{V}\left(\mathcal{H}\right), e \in \mathcal{E}\left(\mathcal{H}\right) \setminus \mathcal{E}(T)\right\}$ \textbf{extends} the $\mathcal{H}$-preaction $\alpha$ if there exists an injection $\iota : X \hookrightarrow X'$ such that \begin{itemize}
		\item for every vertex $s \in \mathcal{V}\left(\mathcal{H}\right)$, $\iota(\dom(\alpha_s)) \subseteq \dom(\alpha'_s)$, and on $\dom(\alpha_s)$ one has $\alpha'_s \circ \iota = \iota \circ \alpha_s$;
		\item for every edge $e \in \mathcal{E}\left(\mathcal{H}\right) \setminus \mathcal{E}(T)$, $\iota(\dom(\tau_e)) \subseteq \dom(\tau'_e)$, and on $\dom(\tau_e)$ one has $\tau'_e \circ \iota = \iota \circ \tau_e$.
	\end{itemize}
	In that case we say that $\alpha$ is a \textbf{sub $\mathcal{H}$-preaction} of $\alpha'$.
	Given two $\mathcal{H}$-preactions $\alpha^{(i)} = \left\{\alpha^{(i)}_s, \tau^{(i)}_e \vert s \in \mathcal{V}\left(\mathcal{H}\right), e \in \mathcal{E}\left(\mathcal{H}\right) \setminus \mathcal{E}(T)\right\}$ (for $i = 1, 2$) on countable sets $X_1$ and $X_2$, respectively, one can define the \textbf{disjoint union} $$\alpha^{(1)} \sqcup \alpha^{(2)} = \left\{\alpha_s, \tau_e \vert s \in \mathcal{V}\left(\mathcal{H}\right), e \in \mathcal{E}\left(\mathcal{H}\right) \setminus \mathcal{E}(T)\right\}$$ as an $\mathcal{H}$-preaction on $X_1 \sqcup X_2$ by \begin{itemize}
		\item $\dom\left(\alpha_s\right) = \dom\left(\alpha^{(1)}_s\right) \sqcup \dom\left(\alpha^{(2)}_s\right)$ with $\alpha_s\Big|_{\dom\left(\alpha^{(1)}_s\right)} = \alpha^{(1)}_s$ and $\alpha_s\Big|_{\dom\left(\alpha^{(2)}_s\right)} = \alpha^{(2)}_s$ for every $s \in \mathcal{V}\left(\mathcal{H}\right)$;
		\item $\dom\left(\tau_e\right) = \dom\left(\tau^{(1)}_e\right) \sqcup \dom\left(\tau^{(2)}_e\right)$ with $\tau_e\Big|_{\dom\left(\tau^{(1)}_e\right)} = \tau^{(1)}_e$ and $\tau_e\Big|_{\dom\left(\tau^{(2)}_e\right)} = \tau^{(2)}_e$.
	\end{itemize}
	The $\mathcal{H}$-preaction $\alpha^{(1)} \sqcup \alpha^{(2)}$ extends both $\alpha^{(1)}$ and $\alpha^{(2)}$. 
	
	An $\mathcal{H}$-preaction $\alpha = \{\alpha_s, \tau_e \vert s \in \mathcal{V}\left(\mathcal{H}\right), e \in \mathcal{E}\left(\mathcal{H}\right)\setminus \mathcal{E}(T)\}$ on a countable set $X$ is called \textbf{transitive} if for any $x, y \in X$, there exist $\delta_1, ..., \delta_r \in \alpha$ such that $y = x \delta_1 \ldots \delta_r$. It is \textbf{saturated} if it comes from a genuine action of $\Gamma$ (\textit{i.e.} the domains of the partial bijections defining $\alpha$ are equal to $X$). If $\alpha$ is a transitive and saturated $\mathcal{H}$-preaction, we will simply call $\alpha$ an \textbf{$\mathcal{H}$-action}. 
	
	\begin{remark}
	The second and third conditions in the definition of an $\mathcal{H}$-preaction force the partial bijections to satisfy the relations in the presentation \eqref{pres1} of $\Gamma$. The last two conditions (which did not appear in the definition of a preaction in \cite{solitar1} because the spanning tree was reduced to a single vertex) are necessary to extend a given $\mathcal{H}$-preaction into a genuine action of $\Gamma$. For instance let $\Gamma$ be the fundamental group of the graph of groups defined in Figure \ref{figureexemple}.
	
	\begin{figure}[ht]
		\center
		\begin{tikzpicture}
			\node[draw,circle,fill=gray!50] (se) at (0,0) {};
			\node[draw,circle,fill=white] (te) at (2,-1) {};
			\node[draw,circle,fill=black] (tf) at (4,0) {};
			\draw[>=latex, directed] (se) -- (te) node[midway, below left]{$e$} node[very near start, below left]{2} node[very near end, below left]{3};
			\draw[>=latex, directed] (te) -- (tf) node[midway, below right]{$f$} node[very near start, below right]{3} node[very near end, below right]{2};
			\draw[red, >=latex, directed] (se) -- (tf) node[midway, above]{$g$} node[very near start, above]{2} node[very near end, above]{2};
		\end{tikzpicture}
		\caption{The graph $\mathcal{H}$.}
		\label{figureexemple}
	\end{figure}

	Let us chose the set of edges $\{e,f\}$ to define the spanning tree $T$.
	
	Let us consider the collection of partial bijections defined in Figure \ref{figurenonextarbre} (here, whenever some vertex $x$ has no outgoing edge labeled $\alpha_s$ (\textit{resp.} $\tau_e$), it means that $x \notin \dom(\alpha_s)$ (\textit{resp.} $x \notin \dom(\tau_e)$)). These partial bijections satisfy all conditions in the definition of an $\mathcal{H}$-preaction except the fourth one (because $\dom\left(\tau_{\overline{g}}\right) \subsetneq \dom\left(\alpha_{\trg(g)}\right)$), and cannot be extended into a genuine preaction because the relation $t_g^{-1} a_{\src(e)}^2 t_g = a_{\trg(f)}^2 = a_{\trg(e)}^3 = a_{\src(e)}^2$ cannot be satisfied (because $t_g^{-1}a_{\src(e)}^2t_g$ sends the black square vertex to the other $\langle  \alpha_{\src(e)}\rangle$-orbit).
	
		\begin{figure}[ht]
		\center
		\begin{tikzpicture}
			\node[draw, circle, fill=gray] (a) at (0,0) {};
			\node[draw,circle, fill=gray] (b) at (-1.5,1.5) {};
			\node[draw,circle, fill=gray] (c) at (-3,0) {};
			\draw[red, >=latex, directed] (c) to[out=240,in=150,looseness=20] node[left]{$\tau_g$} (c);
			\node[draw,circle,fill=gray] (d) at (-1.5,-1.5) {};
			\node[draw, fill=black] (a') at (1.5,0) {};
			\node[draw,circle,fill=gray] (e) at (3,1.5) {};
			\node[draw,circle,fill=gray] (f) at (4.5,0) {};
			\node[draw,circle,fill=gray] (g) at (3,-1.5) {};
			\draw[gray!60, >=latex, directed] (a) to[bend right] node[midway, below left]{$\alpha_{\src(e)}$} (b);
			\draw[gray!60, >=latex, directed] (b) to[bend right] (c);
			\draw[gray!60, >=latex, directed] (c) to[bend right] (d);
			\draw[gray!60, >=latex, directed] (d) to[bend right] (a);
			\draw[gray!60, >=latex, directed] (a') to[bend left] node[midway, below right]{$\alpha_{\src(e)}$} (e);
			\draw[gray!60, >=latex, directed] (e) to[bend left] (f);
			\draw[gray!60, >=latex, directed] (f) to[bend left] (g);
			\draw[gray!60, >=latex, directed] (g) to[bend left] (a');
			\draw[red, >=latex, directed] (a) -- (a') node[midway, above]{$\tau_g$};
		\end{tikzpicture}
		\caption{A collection of partial bijections which cannot be extended into a genuine $\Gamma$-action.}
		\label{figurenonextarbre}
	\end{figure}

	Likewise, let us consider the set of partial bijections defined in Figure \ref{figurenonext}.

	\begin{figure}[ht]
		\center
		\begin{tikzpicture}
			\node[draw, circle, fill=gray] (a) at (0,0) {};
			\node[draw,circle,fill=gray] (b) at (-1.5,1.5) {};
			\node[draw,circle,fill=gray] (c) at (-3,0) {};
			\node[draw,circle,fill=gray] (d) at (-1.5,-1.5) {};
			\node[draw,circle,fill=gray] (e) at (1.5,1.5) {};
			\node[draw,circle,fill=gray] (f) at (3,0) {};
			\node[draw,circle,fill=gray] (g) at (1.5,-1.5) {};
			\draw[gray!50, >=latex, directed] (a) to[bend right] node[midway, below left]{$\alpha_{\src(e)}$} (b);
			\draw[gray!50, >=latex, directed] (b) to[bend right] (c);
			\draw[gray!50, >=latex, directed] (c) to[bend right] (d);
			\draw[gray!50, >=latex, directed] (d) to[bend right] (a);
			\draw[black, >=latex, directed] (a) to[bend left] node[midway, below right]{$\alpha_{\trg(f)}$} (e);
			\draw[black, >=latex, directed] (e) to[bend left] (f);
			\draw[black, >=latex, directed] (f) to[bend left] (g);
			\draw[black, >=latex, directed] (g) to[bend left] (a);
		\end{tikzpicture}
		\caption{A collection of partial bijections which cannot be extended into a genuine $\Gamma$-action.}
		\label{figurenonext}
	\end{figure}
	
	Then, the first four conditions for being an $\mathcal{H}$-preaction are satisfied, but not the fifth one. This cannot be extended into a genuine action of $\Gamma$ because the relation $a_{\src(e)}^2 = a_{\trg(e)}^3 = a_{\trg(f)}^2$ cannot be satisfied.

	\end{remark}
	
	From Conditions \ref{chem1} and \ref{chem2} we get the following lemma, which will be useful to merge preactions:
	\begin{lemma}\label{defdisjoint}
	    Let $\alpha$ be an $\mathcal{H}$-preaction on a countable set $X$. Let $x,y \in X$ such that there exists an edge $e_0 \in \mathcal{E}(\mathcal{H})$ such that, denoting by $s_0, t_0 := \src(e_0), \trg(e_0)$, one has:\begin{itemize}
	        \item $x \in \dom\left(\alpha_{s_0}\right)$ and $x \notin \dom\left(\alpha_{t_0}\right)$;
	        \item $y \in \dom\left(\alpha_{t_0}\right)$ and $y \notin \dom\left(\alpha_{s_0}\right)$.
	    \end{itemize}
	    Then, the set of partial bijections defined on $x$ is disjoint from the set of partial bijections defined on $y$.
	\end{lemma}
	
	\begin{proof}
	Let us assume by contradiction the existence of a vertex $s \in \mathcal{V}\left(\mathcal{H}\right)$ such that $x \in \dom(\alpha_s)$ and $y \in \dom\left({\alpha}_s\right)$. By symmetry between $e_0$ and $\overline{e_0}$, one can assume that there exists a reduced edge path $e_0,e_1,...,e_r \in \mathcal{E}(T)$ with $\trg(e_r) = s$. Hence $x \in \dom(\alpha_{s_0}) \cap \dom(\alpha_s)$ which implies that $x \in \dom\left(\alpha_{t_0}\right)$ by Condition \ref{chem2} in Definition \ref{hpreac}. Hence we get a contradiction in this case. If there exists an edge $e \in \mathcal{E}\left(\mathcal{H}\right)$ such that $x \in \dom\left(\tau_e\right)$ and $y \in \dom\left(\tau_e\right)$, then Condition \ref{chem1} in Definition \ref{hpreac} implies that $x$ and $y$ belong to $\dom(\alpha_{\src(e)})$, which leads to a contradiction by the previous argumentation.
	\end{proof}
	
		Now we introduce the notion of $\mathcal{H}$-graph of a preaction, which will allow us to perturb a Schreier graph. Informally, the $\mathcal{H}$-graph of a preaction is its Schreier graph all of whose orbits of vertex groups have been shrunk. For every vertex $s \in \mathcal{V}\left(\mathcal{H}\right)$, we draw one vertex per $\left\langle \alpha_s \right\rangle$-orbit and label it $(s,N)$ where $N$ is the cardinality of the orbit. There is an edge labeled $e \in \mathcal{E}(\mathcal{H})$ between $x \langle \alpha_s \rangle$ and $x' \langle \alpha_{s'} \rangle$ if and only if $\src(e) = s$, $\trg(e) = s'$, and \begin{itemize}
		    \item either $e \in \mathcal{E}(T)$ and $x \langle \alpha_s \rangle \cap x' \langle \alpha_{s'} \rangle \neq \emptyset$;
		    \item or $e \notin \mathcal{E}(T)$ and $x \langle \alpha_s \rangle \tau_e \cap x' \langle \alpha_{s'} \rangle \neq \emptyset$.
		\end{itemize}
	
	More formally:
	
	\begin{definition}\label{graph}
		Let $\alpha$ be an $\mathcal{H}$-preaction of $\Gamma$ on a countable set $X$. The \textbf{$\mathcal{H}$-graph} $\mathcal{G}$ of $\alpha$ is defined by the following data: \begin{itemize}
			\item Its vertex set is \[\mathcal{V}\left(\mathcal{G}\right) = \bigsqcup_{s \in \mathcal{V}\left(\mathcal{H}\right)}\dom(\alpha_s) / \left\langle \alpha_s \right\rangle;\]
			\item Its edge set is $\mathcal{E}\left(\mathcal{G}\right) = \mathcal{E}^+(\mathcal{G}) \bigsqcup \mathcal{E}^-(\mathcal{G})$ where
			\begin{align*}\mathcal{E}^+&(\mathcal{G}) = \bigsqcup_{e \in \mathcal{E}^+(T)} \dom \left(\alpha_{\src(e)} \right) \cap \dom \left(\alpha_{\trg(e)} \right) / \left\langle \alpha_{\src(e)}^{k_{e,\src}} \right\rangle \\ &\bigsqcup \bigsqcup_{e \in \mathcal{E}^+(\mathcal{H}) \setminus \mathcal{E}^+(T)} \dom(\tau_e) / \left\langle \alpha_{\src(e)}^{k_{e,\src}} \right\rangle \end{align*}
			and 
			\begin{align*}\mathcal{E}^-&(\mathcal{G}) = \bigsqcup_{e \in \mathcal{E}^+(T)} \dom \left(\alpha_{\src(e)} \right) \cap \dom \left(\alpha_{\trg(e)} \right) / \left\langle \alpha_{\trg(e)}^{k_{e,\trg}} \right\rangle \\ &\bigsqcup \bigsqcup_{e \in \mathcal{E}^+(\mathcal{H}) \setminus \mathcal{E}^+(T)} \rng(\tau_e) / \left\langle \alpha_{\trg(e)}^{k_{e,\trg}} \right\rangle \end{align*}
			with source and target maps defined by 
			\begin{itemize}
				\item for every $e \in \mathcal{E}^+(T)$ and every $x \in \dom \left(\alpha_{\src(e)} \right) \cap \dom \left(\alpha_{\trg(e)}\right)$: \[\src \left(x \left\langle \alpha_{\src(e)}^{k_{e,\src}} \right\rangle \right) = x \left\langle \alpha_{\src(e)} \right\rangle\]
				and \[\trg \left(x \left\langle \alpha_{\src(e)}^{k_{e,\src}} \right\rangle \right) = x \left\langle \alpha_{\trg(e)} \right\rangle.\]
				Moreover the involution is given by \[\overline{x \left\langle \alpha_{\src(e)}^{k_{e,\src}} \right\rangle } = x \left\langle \alpha_{\trg(e)}^{k_{e,\trg}} \right\rangle;\]
				\item for every $e \in \mathcal{E}^+(\mathcal{H}) \setminus \mathcal{E}^+(T)$ and $x \in \dom(\tau_e)$ \[\src \left(x \left\langle \alpha_{\src(e)}^{k_{e,\src}} \right\rangle \right) = x \left\langle \alpha_{\src(e)} \right\rangle\] and \[\trg \left(x \left\langle \alpha_{\src(e)}^{k_{e,\src}} \right\rangle \right) = x \tau_e \left\langle \alpha_{\trg(e)} \right\rangle.\]
				Moreover \[\overline{x \left\langle \alpha_{\src(e)}^{k_{e,\src}} \right\rangle } = x \tau_e \left\langle \alpha_{\trg(e)}^{k_{e,\trg}} \right\rangle;\]
				\item Each vertex $x \left\langle \alpha_s \right\rangle$ is labeled $(s,L)$ where $L$ is the cardinality of the $\left\langle \alpha_s \right\rangle$-orbit $L = |x \left\langle \alpha_s \right\rangle|$;
				\item Each edge $x \left\langle \alpha_{\src(e)}^{k_{e,\src}} \right\rangle$ is labeled $e$;
				\item Each edge $x \left\langle \alpha_{\trg(e)}^{k_{e,\trg}} \right\rangle$ is labeled $\overline{e}$. 
			\end{itemize}
		\end{itemize}
	\end{definition}
	Notice that the $\mathcal{H}$-graph of an $\mathcal{H}$-preaction $\alpha$ is connected if and only if $\alpha$ is transitive.
	Given an $\mathcal{H}$-preaction $\alpha$ on a countable set $X$, a vertex $s \in \mathcal{V}\left(\mathcal{H}\right)$ and a vertex $V = x \langle \alpha_s \rangle$ of its $\mathcal{H}$-graph (for some $x \in \dom(\alpha_s)$), we say that the vertex $V$ \textbf{derives from} $x$. Likewise, given an edge $e \in \mathcal{E}\left(\mathcal{H}\right)$ and an edge $E = x \left\langle \alpha_{\src(e)}^{k_{e, \src}} \right\rangle$, we say that the edge $E$ \textbf{derives from} $x$. \\
	Observe that if an $\mathcal{H}$-preaction $\alpha'$ extends $\alpha$, then the $\mathcal{H}$-graph of $\alpha$ is a subgraph of the $\mathcal{H}$-graph of $\alpha'$. 
	
	\begin{definition}
		A vertex $V$ of an $\mathcal{H}$-graph $\mathcal{G}$ of an $\mathcal{H}$-preaction which is labeled $(s,N)$ is a vertex of \textbf{type} $s$. An edge $E$ of an $\mathcal{H}$-graph which is labeled $e$ is an edge of \textbf{type} $e$.
	\end{definition}

\begin{definition}
    The $\mathcal{H}$-preaction associated to a subgroup $\Lambda$ of $\Gamma$ is defined by the $\Gamma$-action $\Lambda \backslash \Gamma \curvearrowleft \Gamma$. The $\mathcal{H}$-graph of a subgroup $\Lambda$ of $\Gamma$ is the $\mathcal{H}$-graph of the associated $\Gamma$-action. 
\end{definition}

\begin{remark}
    Two conjugate subgroups of $\Gamma$ have isomorphic $\mathcal{H}$-graphs.
\end{remark}
	
	There is a correspondence between $\mathcal{H}$-graphs of subgroups and graphs of subgroups $\Lambda \backslash \mathcal{A}$. Keeping the same notations as in Section \ref{bassserre}, recall that the Bass-Serre tree is the quotient graph $Y / \mathbb{Z}$, thus the quotient graph $\Lambda \backslash (Y/\mathbb{Z})$ is precisely $\Lambda \backslash \mathcal{A}$. As left and right actions commute, this graph is also obtained by quotienting $Y$ by the $\Lambda$-(left) action, and then by the $\mathbb{Z}$-(right) action. The quotient $\left(\Lambda \backslash Y\right) / \mathbb{Z}$ is (up to labelling the vertices and edges) the $\mathcal{H}$-graph of $\Lambda$.  
	
	Each vertex $V = (\Lambda \gamma \left\langle a_s \right\rangle, s)$ of $\Lambda \backslash (Y / \mathbb{Z})$ is labeled $(s,N)$, where \begin{align*}
		N &= \left|\left\{\Lambda \gamma a_s^k \vert k \in \mathbb{Z}\right\}\right| \\
		&= \left[\left\langle a_s \right\rangle: \gamma^{-1}\Lambda\gamma \cap \left\langle a_s \right\rangle\right] \\
		&=\left[\Stab_{\Gamma}(\gamma \sigma{s}): \Stab_{\Lambda}(\gamma \sigma{s})\right] 
	\end{align*} (recall that $\sigma$ is the section defined in Section \ref{bassserre}).
	
	Hence the data of the label of $V$ (in the $\mathcal{H}$-graph of $\Lambda$) is equivalent to the data of the vertex group (in the graph of groups of $\Lambda$). 

\begin{propdef}\label{ggraph}
Let $\mathcal{G}$ be the $\mathcal{H}$-graph of an $\mathcal{H}$-preaction $\alpha$. Then it satisfies the following conditions: 
	\begin{enumerate}
			\item Each vertex $V$ of $\mathcal{G}$ is labeled $(s, N)$ for some $s \in \mathcal{V}\left(\mathcal{H}\right)$ and $N \in \mathbb{N} \cup \{\infty\}$. The vertex $s$ is the \textbf{type} of $V$.
			\item Each edge $E$ of $\mathcal{G}$ is labeled $e$ for some $e \in \mathcal{E}\left(\mathcal{H}\right)$ and the vertices $\src(E)$ and $\trg(E)$ are of type $\src(e)$ and $\trg(e)$, respectively. The edge $e$ is the \textbf{type} of $E$.
			\item (\textbf{Transfer Equation}) For every edge $E$ of $\mathcal{G}$ labeled $e$ with source labeled $(\src(e),N)$ and target labeled $(\trg(e), M)$, one has \begin{equation}\label{gbs1transfert} \frac{N}{N \wedge k_{e,\src}} = \frac{M}{M \wedge k_{e,\trg}}; \end{equation}
			\item For every vertex $V$ of $\mathcal{G}$ labeled $(\src(e),N)$, there are at most $N \wedge k_{e,\src}$ incident edges to $V$ labeled $e$ in $\mathcal{G}$;
			\item For every vertex $V$ of $\mathcal{G}$ labeled $(\trg(e),N)$, there are at most $N \wedge k_{e,\trg}$ incident edges to $V$ labeled $\overline{e}$ in $\mathcal{G}$.
		\end{enumerate}
		An abstract graph that satisfies the  previous conditions $(1)$ to $(5)$ is called an \textbf{$\mathcal{H}$-graph}. The $\mathcal{H}$-preaction $\alpha$ is saturated if and only if equality holds in the last two points.
\end{propdef}

	\begin{proof}
		The first two conditions are clear by definition of the $\mathcal{H}$-graph of an $\mathcal{H}$-preaction. Let us check the conditions $(3)$ to $(5)$. \\
		Let $E$ be an edge of $\mathcal{G}$ and let us denote by $e \in \mathcal{E}\left(\mathcal{H}\right)$ its label. Let us denote by $(\src(e),N)$ the label of $\src(E)$ and by $(\trg(e), M)$ the label of $\trg(E)$. By Definition \ref{graph}, the edge $E$ witnesses the existence of \begin{itemize}
		    \item an $\langle \alpha_{\src(e)} \rangle$-orbit $O_1$ of cardinal $N$;
		    \item an $\langle \alpha_{\trg(e)} \rangle$-orbit $O_2$ of cardinal $M$
		\end{itemize}
		such that \begin{itemize}
		    \item if $e \in T$, then there exists $x \in O_1 \cap O_2$;
		    \item if $e \notin T$, then there exists $x \in O_1$ such that $x \in \dom(\tau_e)$ and $x \tau_e \in O_2$.
		\end{itemize}

		Let us define $f = \tau_e$ if $e \notin T$ and $f=\id$ otherwise. One has: \begin{align*}
			N &= \left|x \left\langle \alpha_{\src(e)}\right\rangle\right|; \\
			M &= \left|x f \left\langle \alpha_{\trg(e)}\right\rangle\right|.
		\end{align*}
		Then, by the definition of an $\mathcal{H}$-preaction \ref{preac} (item 2 and 3), $f$ induces a bijection between $x \left\langle \alpha_{\src(e)}^{k_{e,\src}} \right\rangle$ and $xf \left\langle \alpha_{\trg(e)}^{k_{e,\trg}} \right\rangle$, which have cardinalities $\frac{N}{N \wedge k_{e,\src}}$ and $\frac{M}{M \wedge k_{e, \trg}}$, respectively (or $\infty$ if $N = \infty$). Hence we deduce the Transfer Equation \ref{gbs1transfert}. 
		
		The number of $\left\langle \alpha_{\src(e)}^{k_{e,\src}} \right\rangle$-orbits contained in $x \left\langle \alpha_{\src(e)}\right\rangle$ is $N \wedge k_{e, \src}$. As the number of incident edges to $\src(E)$ labeled $e$ is precisely the number of $\left\langle \alpha_{\src(e)}^{k_{e,\src}}\right\rangle$-orbits in $x \left\langle \alpha_{\src(e)}\right\rangle$ on which $f\alpha_{\trg(e)}$ is defined, one deduces the fourth item. Replacing $e$ by $\overline{e}$ leads to the fifth item. Furthermore, equality holds if and only if $f$ is defined on the entire $\left\langle \alpha_{\src(e)} \right\rangle$-orbit. Hence equality holds for every vertices and edges if and only if every partial bijection defining $\alpha$ is defined on the entire set $X$, that is to say, if and only if $\alpha$ is a $\Gamma$-action. 
	\end{proof}
	
	\begin{definition}
		Let $\mathcal{G}$ be an $\mathcal{H}$-graph and $V \in \mathcal{V}\left(\mathcal{G}\right)$ a vertex labeled $(s,N)$. The vertex $V$ is \textbf{saturated relatively to an edge} $e \in \mathcal{E}^+(\mathcal{H})$ with source $s$ if equality holds in the last two points of Proposition-Definition~\ref{ggraph}. It is said to be \textbf{saturated} if it is saturated relatively to every edge $e$ with source $s$. The graph $\mathcal{G}$ is called saturated if every vertex of $\mathcal{G}$ is saturated.
	\end{definition}
	Notice that the $\mathcal{H}$-graph of a transitive $\mathcal{H}$-preaction $\alpha$ is saturated if and only if $\alpha$ is saturated. 
	
\section{\texorpdfstring{From an $\mathcal{H}$-graph to an $\mathcal{H}$-preaction}{From an H-graph to an H-preaction}}

Our goal is now to build an $\mathcal{H}$-preaction given an $\mathcal{H}$-graph, the final goal being \begin{enumerate}
    \item the merging of $\mathcal{H}$-preactions (using Constructions \hyperlink{ext}{A} and \hyperlink{quot}{B});
    \item the extension of a non-saturated $\mathcal{H}$-preaction (using Constructions \hyperlink{ext'}{A'} and \hyperlink{quot'}{B'});
    \item using these procedures to obtain an $\mathcal{H}$-preaction from an $\mathcal{H}$-graph (Lemma \ref{arete} and Lemma \ref{retour}).
\end{enumerate}
To achieve this, we prove that suitable mergings (or extensions) of $\mathcal{H}$-graphs of $\mathcal{H}$-preactions lead to mergings (or extensions) of the $\mathcal{H}$-preactions. 

	 Now let us enter into the details of Constructions \hyperlink{ext}{A} and \hyperlink{quot}{B}. Given an $\mathcal{H}$-preaction $\alpha$ defined by a set of partial bijections $\{\alpha_s, \tau_e \vert (s,e) \in \mathcal{V}\left(\mathcal{H}\right) \times \mathcal{E}\left(\mathcal{H}\right) \setminus \mathcal{E}(T)\}$ on a countable set $X$, these two procedures will allows us \begin{itemize}
		\item either to extend $\alpha$ into another $\mathcal{H}$-preaction (Construction \hyperlink{ext}{A});
		\item or to identify some points in $X$ and to define the $\mathcal{H}$-preaction induced by $\alpha$ on the quotient set (Construction \hyperlink{quot}{B}).
	\end{itemize}
	The effect of these constructions is to add an edge between two non-saturated vertices in the $\mathcal{H}$-graph of $\alpha$.
	
	We start with a non-saturated preaction $\alpha = \{\alpha_s, \tau_e \vert (s,e) \in \mathcal{V}\left(\mathcal{H}\right) \times \mathcal{E}\left(\mathcal{H}\right) \setminus \mathcal{E}(T)\}$ on a countable set. We draw the attention of the reader to the fact that $\alpha$ need not be transitive. \\
	\\
	\textbf{Construction \hypertarget{ext}{A}:} \textbf{Extending $\mathbf{\alpha}$.}
	Let $e_0 \in \mathcal{E}\left(\mathcal{H}\right) \setminus \mathcal{E}(T)$ and $x, y \in X$ such that, denoting by $(s_0,t_0) = (\src(e_0), \trg(e_0))$ and $(k,l) = \left(k_{e_0, \src}, k_{e_0, \trg}\right)$: \begin{enumerate}
		\item $x \in \dom\left(\alpha_{s_0}\right)$ and $x \notin \dom\left(\tau_{e_0}\right)$;
		\item $y \in \dom\left(\alpha_{t_0}\right)$ and $y \notin \rng\left(\tau_{e_0}\right)$;
		\item denoting by $N = \left|x\left\langle \alpha_{s_0} \right\rangle\right|$ and $M = \left|y\left\langle \alpha_{t_0} \right\rangle\right|$, one has \[\frac{N}{N \wedge k} = \frac{M}{M \wedge l}.\]
	\end{enumerate}
	In terms of $\mathcal{H}$-graphs, the first two items are equivalent to the existence of two vertices $V$, $W$ of type $s_0$ and $t_0$, respectively, in $\mathcal{G}$ which are not saturated relatively to $e_0$ and $\overline{e_0}$, respectively. 
	
	Let us extend the definition of $\tau_{e_0}$ as follows: define $$\begin{array}{ccccc} \widetilde{\tau_{e_0}} &:& 
		\dom\left(\tau_{e_0}\right) \cup x\left\langle \alpha_{s_0}^k \right\rangle & \to & \rng\left(\tau_{e_0}\right) \cup y\left\langle \alpha_{t_0}^l \right\rangle \\
		& & p & \mapsto & p \tau_{e_0} \text{ \ if $p \in \dom\left(\tau_{e_0}\right)$} \\
		& & x\alpha_{s_0}^{kj} & \mapsto & y\alpha_{t_0}^{lj}. \\
	\end{array}$$
	which is a partial bijection thanks to the equality $\frac{N}{N \wedge k} = \frac{M}{M \wedge l}$. Define $\widetilde{\tau_{\overline{e_0}}} := \widetilde{\tau_{e_0}}^{-1}$. Then the $\mathcal{H}$-preaction $\widetilde{\alpha} = \left(\alpha \setminus \{\tau_{e_0}, \tau_{\overline{e_0}}\}\right) \cup \{\widetilde{\tau_{e_0}}, \widetilde{\tau_{\overline{e_0}}}\}$ extends $\alpha$. 
	This construction is represented in Figure \ref{figureconsta}. 
	\begin{figure}[ht]
		\center
		\begin{tikzpicture}
			\node[draw,circle,fill=orange] (a) at (-2,0) {$s$};
			\node[draw,circle,fill=blue] (b) at (1,0) {\color{white} $t$};
			\draw[purple, >=latex, directed] (a) -- (b) node[midway, below]{\color{black} $e \notin T$} node[near start, above]{\color{black} $k=2$} node[near end, above]{\color{black} $l=3$};
			\node[draw,circle] (A) at (3,0) {};
			\node[draw,circle] (B) at (4.5,1.5) {};
			\draw (B) node[above]{$x \notin \dom(\tau_e)$};
			\node[draw,circle] (C) at (6,0) {};
			\node[draw,circle] (D) at (4.5,-1.5) {};
			\node[draw,circle] (E) at (7.5,1.5) {};
			\draw (E) node[above]{$y \notin \rng(\tau_e)$};
			\draw (D) node[below]{($N=4$)};
			\node[draw,circle] (F) at (7.5,-1.5) {};
			\draw (F) node[below]{($M=2$)};
			\draw[orange,->,>=latex] (B) to[bend right] node[midway, above left]{$\alpha_s$} (A);
			\draw[orange,->,>=latex] (A) to[bend right] (D);
			\draw[orange,->,>=latex] (D) to[bend right] (C);
			\draw[orange,->,>=latex] (C) to[bend right] (B);
			\draw[blue,->,>=latex] (E) to[bend left] node[right]{$\alpha_t$} (F);
			\draw[blue,->,>=latex] (F) to[bend left] (E);
			\draw[->,>=latex] (-1.75,-4.5) to node[midway, above]{Construction A} (1.25,-4.5);
			\node[draw,circle] (A') at (3,-4.5) {};
			\node[draw,circle] (B') at (4.5,-3) {};
			\draw (B') node[above]{$x$};
			\node[draw,circle] (C') at (6,-4.5) {};
			\node[draw,circle] (D') at (4.5,-6) {};
			\node[draw,circle] (E') at (7.5,-3) {};
			\draw (E') node[above]{$y$};
			\node[draw,circle] (F') at (7.5,-6) {};
			\draw[orange,->,>=latex] (B') to[bend right] node[midway, above left]{$\alpha_s$} (A');
			\draw[orange,->,>=latex] (A') to[bend right] (D');
			\draw[orange,->,>=latex] (D') to[bend right] (C');
			\draw[orange,->,>=latex] (C') to[bend right] (B');
			\draw[blue,->,>=latex] (E') to[bend left] node[right]{$\alpha_t$} (F');
			\draw[blue,->,>=latex] (F') to[bend left] (E');
			\draw[purple,->,>=latex] (B') to[bend right=15] node[midway, above left]{$\tau_e$} (E');
			\draw[purple,->,>=latex] (D') to[bend left=15] (F');
		\end{tikzpicture}
		\caption{Construction A.}
		\label{figureconsta}
	\end{figure} 
	\text{ } 	In terms of $\mathcal{H}$-graphs, the effect of Construction \hyperlink{ext}{A} is to add an edge labeled $e_0$ with source $V$ and target $W$.\\
	\\
	\textbf{Construction \hypertarget{quot}{B}:} \textbf{Quotienting $\mathbf{X}$.}
	Let $e_0 \in \mathcal{E}\left(T\right)$ and $x, y \in X$ such that, denoting by $(s_0,t_0) = (\src(e_0), \trg(e_0))$ and $(k,l) = (k_{e_0, \src}, k_{e_0, \trg})$: \begin{enumerate}
		\item $x \in \dom\left(\alpha_{s_0}\right)$ and $x \notin \dom\left(\alpha_{t_0}\right)$;
		\item $y \in \dom\left(\alpha_{t_0}\right)$ and $y \notin \dom\left(\alpha_{s_0}\right)$;
		\item denoting by $N = \left|x\left\langle \alpha_{s_0} \right\rangle\right|$ and $M = \left|y\left\langle \alpha_{t_0} \right\rangle\right|$ one has \[\frac{N}{N \wedge k} = \frac{M}{M \wedge l}.\]
	\end{enumerate}
	In terms of $\mathcal{H}$-graphs, the first two items are equivalent to the existence of two vertices $V$, $W$, of type $s_0$ and $t_0$ in $\mathcal{G}$ which are not saturated relatively to $e_0$ and $\overline{e_0}$, respectively. Let us define the quotient \[X' = X/\left(x\alpha_{s_0}^{kj} = y\alpha_{t_0}^{lj} \ \forall j \in \mathbb{Z}\right).\]
	\begin{figure}[ht]
		\center
		\begin{tikzpicture}
			\node[draw,circle,fill=orange] (a) at (-2,0) {$s$};
			\node[draw,circle,fill=blue] (b) at (1,0) {\color{white} $t$};
			\draw[purple, >=latex, directed] (a) -- (b) node[midway, below]{\color{black} $e \in T$} node[near start, above]{\color{black} $k=2$} node[near end, above]{\color{black} $l=3$};
			\node[draw,circle] (A) at (3,0) {};
			\node[draw,circle] (B) at (4.5,1.5) {};
			\draw (B) node[above]{$x \notin \dom(\alpha_t)$};
			\node[draw,circle] (C) at (6,0) {};
			\node[draw,circle] (D) at (4.5,-1.5) {};
			\node[draw,circle] (E) at (7.5,1.5) {};
			\draw (E) node[above]{$y \notin \dom(\alpha_s)$};
			\draw (D) node[below]{($N=4$)};
			\node[draw,circle] (F) at (7.5,-1.5) {};
			\draw (F) node[below]{($M=2$)};
			\draw[orange,->,>=latex] (B) to[bend right] node[midway, above left]{$\alpha_s$} (A);
			\draw[orange,->,>=latex] (A) to[bend right] (D);
			\draw[orange,->,>=latex] (D) to[bend right] (C);
			\draw[orange,->,>=latex] (C) to[bend right] (B);
			\draw[blue,->,>=latex] (E) to[bend left] node[right]{$\alpha_t$} (F);
			\draw[blue,->,>=latex] (F) to[bend left] (E);
			\draw[->,>=latex] (-1.75,-4.5) to node[midway, above]{Construction B} (1.25,-4.5);
			\node[draw,circle] (A') at (3,-4.5) {};
			\node[draw,circle] (B') at (4.5,-3) {};
			\draw (B') node[above]{$x=y \in \dom(\alpha_t) \cap \dom(\alpha_s)$};
			\node[draw,circle] (C') at (6,-4.5) {};
			\node[draw,circle] (D') at (4.5,-6) {};
			\draw[orange,->,>=latex] (B') to[bend right] node[midway, above left]{$\alpha_s$} (A');
			\draw[orange,->,>=latex] (A') to[bend right] (D');
			\draw[orange,->,>=latex] (D') to[bend right] (C');
			\draw[orange,->,>=latex] (C') to[bend right] (B');
			\draw[blue,->,>=latex] (B') to[bend left] node[right]{$\alpha_t$} (D');
			\draw[blue,->,>=latex] (D') to[bend left] (B');
		\end{tikzpicture}
		\caption{Construction B.}
		\label{figureconstb}
	\end{figure} 
	
	Observe that both orbits $x\left\langle \alpha_{s_0} \right\rangle$ and $y\left\langle \alpha_{t_0} \right\rangle$ inject in $X'$ because of the equality \begin{align*}
		\left|x \left\langle \alpha_{s_0}^k \right\rangle\right| &= \frac{N}{N \wedge k} \\
		&= \frac{M}{M \wedge l} \\
		&= \left|y \left\langle \alpha_{t_0}^l \right\rangle\right|. 
	\end{align*}
	By Lemma \ref{defdisjoint}, the domains and ranges of all the previously defined partial bijections inject in $X'$. Thus, $\alpha$ defines a genuine preaction on $X'$. This construction is represented in Figure \ref{figureconstb}. In terms of $\mathcal{H}$-graphs, the effect of Construction \hyperlink{quot}{B} is to add an edge labeled $e_0$ with source $V$ and target $W$.
	
	\begin{remark}\label{nonextingeneral}
	    Applying Construction \hyperlink{ext}{A} to an $\mathcal{H}$-preaction $\alpha$ always leads to an extension of $\alpha$. In contrast, Construction \hyperlink{quot}{B} cannot lead to an extension of $\alpha$. As an example, let us consider the GBS group defined by the graph of GBS groups represented in Figure \ref{exnonextbgr} (where the spanning tree is defined by the edge $f$), and let us define the preaction $\alpha$ as in Figure \ref{defalpha}.
	    
	    \begin{figure}[ht]
		\center
		\begin{tikzpicture}
	    	\node[draw,circle,fill=orange] (a) at (-2,0) {$s$};
			\node[draw,circle,fill=blue] (b) at (1,0) {\color{white} $t$};
			\draw[>=latex, directed, color=purple] (a) to[bend left] node[midway, above]{$e$} node[near start, above]{\color{black} $2$} node[near end, above]{\color{black} $3$} (b);
			\draw[>=latex, directed] (a) to[bend right] node[midway, below]{$f$} node[near start, below]{\color{black} $2$} node[near end, below]{\color{black} $3$} (b);
	   \end{tikzpicture}
	   \caption{A graph of GBS group} 
	   \label{exnonextbgr}
	   \end{figure}
	    
	    	\begin{figure}[ht]
		\center
		\begin{tikzpicture}
			\node[draw,circle] (A') at (3,-4.5) {};
			\node[draw,circle] (B') at (4.5,-3) {};
			\draw (B') node[above]{$x$};
			\node[draw,circle] (C') at (6,-4.5) {};
			\node[draw,circle] (D') at (4.5,-6) {};
			\node[draw,circle] (E') at (7.5,-3) {};
			\draw (E') node[above]{$y$};
			\node[draw,circle] (F') at (7.5,-6) {};
			\draw[orange,->,>=latex] (B') to[bend right] node[midway, above left]{$\alpha_s$} (A');
			\draw[orange,->,>=latex] (A') to[bend right] (D');
			\draw[orange,->,>=latex] (D') to[bend right] (C');
			\draw[orange,->,>=latex] (C') to[bend right] (B');
			\draw[blue,->,>=latex] (E') to[bend left] node[right]{$\alpha_t$} (F');
			\draw[blue,->,>=latex] (F') to[bend left] (E');
			\draw[purple,->,>=latex] (B') to[bend right=15] node[midway, above left]{$\tau_e$} (E');
			\draw[purple,->,>=latex] (D') to[bend left=15] (F');
		\end{tikzpicture}
		\caption{The $\mathcal{H}$-preaction $\alpha$}
		\label{defalpha}
	\end{figure} 
	    Then, $x \notin \dom(\alpha_t)$ and $y \notin \dom(\alpha_s)$, so applying Construction \hyperlink{quot}{B} to the edge $f$ leads to the preaction $\beta$ defined in Figure \ref{resultpreac}, which is not an extension of $\alpha$. 
	    
	    	\begin{figure}[ht]
		\center
		\begin{tikzpicture}
			\node[draw,circle] (A') at (3,-4.5) {};
			\node[draw,circle] (B') at (4.5,-3) {};
			\draw (B') node[above]{$x=y$};
			\node[draw,circle] (C') at (6,-4.5) {};
			\node[draw,circle] (D') at (4.5,-6) {};
			\draw[orange,->,>=latex] (B') to[bend right] node[midway, above left]{$\alpha_s$} (A');
			\draw[orange,->,>=latex] (A') to[bend right] (D');
			\draw[orange,->,>=latex] (D') to[bend right] (C');
			\draw[orange,->,>=latex] (C') to[bend right] (B');
			\draw[blue,->,>=latex] (D') to[bend left] node[right]{$\alpha_t$} (B');
			\draw[blue,->,>=latex] (B') to[bend left] (D');
			\draw[purple,->,>=latex] (B') to [out=135,in=45,looseness=20] node[midway, above left]{$\tau_e$} (B');
			\draw[purple,->,>=latex] (D') to [out=-135,in=-45,looseness=20] (D');
		\end{tikzpicture}
		\caption{The $\mathcal{H}$-preaction $\beta$}
		\label{resultpreac}
	\end{figure} 

	    Nevertheless, in the case where $\alpha$ is not transitive, applying these constructions to points lying in disjoint sub-$\mathcal{H}$-preactions of $\alpha$ leads to an extension of both sub $\mathcal{H}$-preactions. 
	\end{remark}
	
	Despite the subtlety highlighted in Remark \ref{nonextingeneral}, these constructions allow to see an $\mathcal{H}$-graph as the $\mathcal{H}$-graph of an $\mathcal{H}$-preaction, as evidenced by the following lemma: 
	
		\begin{lemma}\label{arete}
		Let $\mathcal{G}$ be a finite and connected $\mathcal{H}$-graph. Then there exists an $\mathcal{H}$-preaction whose $\mathcal{H}$-graph is $\mathcal{G}$.
	\end{lemma}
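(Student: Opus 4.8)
The plan is to mimic the inductive construction used in the proof of Lemma \ref{retour}, building $\mathcal{G}$ as an increasing union of finite connected $\mathcal{H}$-graphs $\mathcal{G}_0 \subseteq \mathcal{G}_1 \subseteq \cdots \subseteq \mathcal{G}_m = \mathcal{G}$, together with a sequence of $\mathcal{H}$-preactions $\alpha_n$ on countable sets $X_n$ whose $\mathcal{H}$-graph is $\mathcal{G}_n$ and each of which extends the previous one. Concretely, I would first fix a spanning tree $T_{\mathcal{G}}$ of the abstract graph $\mathcal{G}$ and enumerate the edges so that $\mathcal{G}_0$ is a single vertex, the first steps add the edges of $T_{\mathcal{G}}$ (each to a fresh target vertex, thereby realizing every vertex of $\mathcal{G}$), and the remaining steps add the edges of $\mathcal{G}$ that close cycles. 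The base case is immediate: a vertex labeled $(s,N)$ is realized by the $\mathcal{H}$-preaction consisting of a single $\langle \alpha_s \rangle$-orbit of cardinality $N$ with all other partial bijections empty, for which conditions $(1)$--$(5)$ of Definition \ref{hpreac} hold vacuously.

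For a step attaching a new edge $E$ of type $e_0$ with $\src(E) = V \in \mathcal{G}_n$ and fresh target $W \notin \mathcal{G}_n$, write $(k,l) = (k_{e_0,\src}, k_{e_0,\trg})$. The transfer equation for $\mathcal{G}_{n+1}$ gives $\frac{N}{N \wedge k} = \frac{M}{M \wedge l}$, where $N, M$ are the prescribed orbit sizes of $V$ and $W$, and the fact that $\mathcal{G}_{n+1}$ is an $\mathcal{H}$-graph (so that $V$ is not saturated relatively to $e_0$ in $\mathcal{G}_n$) produces a point in the orbit $V$ on which the relevant bijection is undefined. I would then apply Construction \hyperlink{quot'}{B'} if $e_0 \in T$ and Construction \hyperlink{ext'}{A'} if $e_0 \notin T$ to create the fresh orbit $W$ of size $M$; this is verbatim the first case of the proof of Lemma \ref{retour}, and no cycle is created.

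The genuinely new situation is a step that closes a cycle, adding an edge $E$ of type $e_0$ between two vertices $V, W$ already present in $\mathcal{G}_n$. When $e_0 \notin T$ this is harmless: non-saturation of $V$ and $W$ relatively to $e_0$ and $\overline{e_0}$ yields $y \in V$ with $\tau_{e_0}$ undefined on $y$ and $y' \in W$ with $\tau_{e_0}^{-1}$ undefined on $y'$, and Construction \hyperlink{ext}{A} extends $\tau_{e_0}$ by sending the orbit of $y$ to that of $y'$ without altering any domain $\dom(\alpha_s)$, so conditions $(1)$--$(5)$ persist. The main obstacle is the case $e_0 \in T$: here one must apply Construction \hyperlink{quot}{B} to identify the $\langle \alpha_{\src(e_0)}^{k} \rangle$-orbit of a point $x \in V$ with $x \notin \dom(\alpha_{\trg(e_0)})$ with the $\langle \alpha_{\trg(e_0)}^{l} \rangle$-orbit of a point $y \in W$ with $y \notin \dom(\alpha_{\src(e_0)})$ (such points existing by non-saturation). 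Unlike in Lemma \ref{retour}, now $x$ and $y$ lie in the same connected component, so I must check that the identification still yields a valid $\mathcal{H}$-preaction.

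The observation that resolves this is that the disjointness argument given in Construction \hyperlink{quot}{B} never uses that $x$ and $y$ belong to distinct components: removing $e_0$ splits $T$ into the subtree containing $\src(e_0)$ and the subtree containing $\trg(e_0)$, and if some $\alpha_s$ (resp. $\tau_e$) were defined on both $x$ and $y$, then applying condition \ref{chem2} (resp. \ref{chem1}) of Definition \ref{hpreac} along the unique $T$-path would force $x \in \dom(\alpha_{\trg(e_0)})$ or $y \in \dom(\alpha_{\src(e_0)})$, a contradiction. This shows more precisely that every vertex-domain to which $x$ belongs lies in the $\src(e_0)$-subtree, and every one for $y$ in the $\trg(e_0)$-subtree; hence after identifying $x$ with $y$ the path conditions for the merged point follow by concatenating the two halves of any $T$-path across the bridge $e_0$, so conditions \ref{chem1} and \ref{chem2} survive. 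Since no domain other than those of the identified orbit is affected, the only new adjacency created is precisely the edge $E$ of type $e_0$, so the $\mathcal{H}$-graph of $\alpha_{n+1}$ is exactly $\mathcal{G}_{n+1}$. Finiteness of $\mathcal{G}$ guarantees that the process halts, and $\alpha := \alpha_m$ is then an $\mathcal{H}$-preaction whose $\mathcal{H}$-graph is $\mathcal{G}$.
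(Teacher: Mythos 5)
Your proof is correct and takes essentially the same route as the paper's: an induction on the edges of $\mathcal{G}$ that realizes each new edge via Constructions A/B (or A'/B'), the only cosmetic difference being that the paper initializes with \emph{all} vertices of $\mathcal{G}$ as disjoint $\langle \alpha_s \rangle$-orbits and then adds every edge, rather than growing along a spanning tree of $\mathcal{G}$ first. Your explicit verification that Construction B remains valid when $x$ and $y$ lie in the same connected component --- using conditions \ref{chem1} and \ref{chem2} and the fact that $e_0$ is a bridge of $T$ --- is exactly the point the paper leaves implicit, and your argument for it is the right one.
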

	
	\begin{proof}
		The proof works by induction on the number of edges of $\mathcal{G}$: write $\mathcal{G}$ as an increasing (finite) union of $\mathcal{G}$-graphs $\cup_{n=0}^N\mathcal{G}_n$ such that, for every $n \in \llbracket 0, N-1\rrbracket$, one has $\mathcal{G}_{n+1} = \mathcal{G}_n \cup \{E_n\}$ for some edge $E_n \in \mathcal{E}\left(\mathcal{G}\right)$ and $\mathcal{G}_0$ is the subgraph of $\mathcal{G}$ defined by its set of vertices $\mathcal{V}\left(\mathcal{G}\right)$ and no edge. For every $V \in \mathcal{V}\left(\mathcal{G}\right)$, let us denote by $(s_V, N_V)$ the label of the vertex $V$ and let us define the $\mathcal{H}$-preaction $\alpha_0 = \{\alpha_s \vert s \in \mathcal{V}\left(\mathcal{H}\right)\}$ as follows: \[X_0 = \bigsqcup_{V \in \mathcal{V}\left(\mathcal{G}\right)}\mathbb{Z}/N_V\mathbb{Z}\]
		and, for every $s \in \mathcal{V}\left(\mathcal{H}\right)$: \[\dom(\alpha_s) = \bigsqcup_{V \in \mathcal{V}\left(\mathcal{G}\right) \mid s_V = s}\mathbb{Z}/N_V\mathbb{Z}\] (where, by convention, $\mathbb{Z}/\infty\mathbb{Z} = \mathbb{Z}$) with $\alpha_s$ acting on $\mathbb{Z}/N_V\mathbb{Z}$ by translation by $1$, so that the $\mathcal{H}$-graph of $\alpha_0$ is $\mathcal{G}_0$. Using Constructions \hyperlink{ext}{A} or \hyperlink{quot}{B} according on whether the label of $E_n$ is in $T$ or not, we build inductively a sequence of $\mathcal{H}$-preactions $(\alpha_n)_{n \in \llbracket 0, N\rrbracket}$ on countable sets $(X_n)_{n \in \llbracket 0, N \rrbracket}$ such that the $\mathcal{H}$-graph of $\alpha_n$ is $\mathcal{G}_n$ for every $n \in \llbracket 0, N\rrbracket$, which leads to the conclusion. 
	\end{proof}
	
	Notice that, in the previous Lemma, $\alpha_{n+1}$ is not necessarily an extension of $\alpha_n$: because of the use of Construction \hyperlink{quot}{B}, the set $X_n$ need not be included in $X_{n+1}$. 
	
	The following constructions \hyperlink{ext'}{A'} and \hyperlink{quot'}{B'} furnish a tool to extend a non-saturated $\mathcal{H}$-preaction $\alpha$ defined on a countable set $X$: let $x \in X$ such that there exists an edge $e \in \mathcal{E}\left(\mathcal{H}\right)$ satisfying $x \in \dom\left(\alpha_{\src(e)}\right)$ and: \begin{itemize}
		\item either $e \notin \mathcal{E}(T)$ and  $x \notin \dom\left(\tau_e\right)$;
		\item or $e \in \mathcal{E}(T)$ and $x \notin \dom\left(\alpha_{\trg(e)}\right)$.
	\end{itemize}
	Denoting by $N = \left|x\left\langle \alpha_{\src(e)} \right\rangle \right|$, then for any $M \in \mathbb{N} \cup \{\infty\}$ satisfying $\frac{N}{N \wedge k_{e, \src}} = \frac{M}{M \wedge k_{e, \trg}}$, one can extend $\alpha$ by using one of the two following constructions:\\
	\\
	\textbf{Construction \hypertarget{ext'}{A'}}: if $e \notin \mathcal{E}(T)$, apply Construction \hyperlink{ext}{A} to the disjoint union of $\alpha$ and a new $\mathcal{H}$-preaction defined by a single $\left \langle \alpha_{\trg(e)} \right \rangle$-orbit of cardinality $M$ in order to define the image of $x$ under $\tau_e$ (and thus of $x \alpha_{\src(e)}^{k_{e, \src}j}$ for every $j \in \mathbb{Z}$) as an arbitrary element of this new orbit and the $\left \langle \alpha_{\trg(e)} \right \rangle$-orbit of $x \tau_e$ (which is the one of $x \alpha_{\src(e)}^{k_{e, \src}j}\tau_e$ for every $j \in \mathbb{Z}$) of cardinality $M$. \\
	\\
	\textbf{Construction \hypertarget{quot'}{B'}}: if $e \in \mathcal{E}(T)$, apply Construction \hyperlink{quot}{B} to the disjoint union of $\alpha$ and a new $\mathcal{H}$-preaction defined by a single $\left \langle \alpha_{\trg(e)} \right \rangle$-orbit of cardinality $M$ in order to define the $\left \langle \alpha_{\trg(e)} \right \rangle$-orbit of $x$ (and thus of $x \alpha_{\src(e)}^{k_{e, \src}j}$ for every $j \in \mathbb{Z}$) of cardinality $M$. \\
	\\
	The effect of these two constructions is to add an edge with source a non-saturated vertex of the $\mathcal{H}$-graph of $\alpha$ and target a new vertex.
	
	Iterating these constructions leads to the following lemma: 
	
	\begin{lemma}\label{retour}
		Let $({\alpha^{(i)}})_{i \in I}$ ($I$ being countable) be a collection of $\mathcal{H}$-preactions on countable sets $\left(X^{(i)}\right)_{i \in I}$ whose $\mathcal{H}$-graphs are connected. Let $\mathcal{G}^{(i)}$ be the $\mathcal{H}$-graph of $\alpha^{(i)}$ and let $\mathcal{G}$ be a connected $\mathcal{H}$-graph such that: \begin{itemize}
			\item $\bigsqcup_{i \in I}\mathcal{G}^{(i)} \subseteq \mathcal{G}$;
			\item The quotient $\mathcal{G}/\left(\mathcal{G}^{(i)}, i \in I\right)$ (see Section \ref{def}) is a tree.
		\end{itemize}
		Then there exists an $\mathcal{H}$-preaction $\alpha$ whose $\mathcal{H}$-graph is $\mathcal{G}$ such that $\alpha$ extends ${\alpha^{(i)}}$ for every $i \in I$.
	\end{lemma}
	
	\begin{proof}
		Let us write $\mathcal{G}$ as an increasing union of $\mathcal{H}$-graphs: $\mathcal{G} = \bigcup_{n \in \mathbb{N}}\mathcal{G}_n$ with $\mathcal{G}_0 = \mathcal{G}^{(i_0)}$ for some $i_0 \in I$ and such that  for every $n \in \mathbb{N}$ \begin{enumerate}
			\item\label{it: out} either $\mathcal{G}_{n+1} = \mathcal{G}_n \cup \{E\}$ for some edge $E$ with source $V := \src(E) \in \mathcal{G}_n$ and target $W := \trg(E) \notin \mathcal{G}^{(i)}$ for every $i \in I$;
			\item\label{it: in} or $\mathcal{G}_{n+1} = \mathcal{G}_n \cup \{E\} \cup \mathcal{G}^{(i)}$ for some edge $E$ with source $V := \src(E) \in \mathcal{G}_n$ and target $W := \trg(E) \in \mathcal{G}^{(i)}$.
		\end{enumerate}
		One defines a sequence of $\mathcal{H}$-preactions $\beta_n$ on countable sets $X_n$ such that \begin{itemize} 
			\item $\beta_0 = \alpha^{(i_0)}$ and $X_0 = X^{(i_0)}$; 
			\item for every $n \in \mathbb{N}$, the $\mathcal{H}$-graph of $\beta_n$ is $\mathcal{G}_n$;
			\item $X_n \subseteq X_{n+1}$ and $\beta_{n+1}$ extends $\beta_n$;
			\item for every $i \in I$ there exists $n \in \mathbb{N}$ such that $X^{(i)} \subseteq X_n$ and $\beta_n$ extends ${\alpha^{(i)}}$.
			
		\end{itemize}
		To achieve this, we proceed by induction on $n$. Let us assume that $(X_n,\beta_n)$ is defined and let us define $(X_{n+1},\beta_{n+1})$. We denote by $\{\alpha_s, \tau_e \vert s \in \mathcal{V}\left(\mathcal{H}\right), e \in \mathcal{E}\left(\mathcal{H}\right) \setminus \mathcal{E}(T)\}$ the set of partial bijections defining $\beta_n$. We denote by $E$ the edge provided by Item~\ref{it: out} or Item~\ref{it: in}, and by $e_0$ its label. We also introduce $(s_0,t_0) := (\src(e_0),\trg(e_0))$ and $(k,l) := (k_{e_0,\src}, k_{e_0,\trg})$. \\
		\\
		\textbf{1st case}: Assume first that $\mathcal{G}_{n+1} = \mathcal{G}_n \cup \{E\}$ with $\src(E) = V \in \mathcal{G}_n$ and $\trg(E) = W \notin \bigsqcup_{i \in I}\mathcal{G}^{(i)}$. Since we assumed that the quotient $\mathcal{G}/(\mathcal{G}^{(i)}; i \in I)$ is a tree, one has $W \notin \mathcal{G}_n$
	 One has $V = x\left\langle \alpha_{s_0} \right\rangle$ for some $x \in X_n$. Let $(s_0,N)$, $(t_0,M)$ be the labels of $V$ and $W$, respectively. By the definition of an $\mathcal{H}$-graph (see Proposition-Definition~\ref{ggraph}), we have \[\frac{N}{N \wedge k} = \frac{M}{M \wedge l}.\] 
		Assume first that $e_0 \in \mathcal{E}\left(T\right)$. As the existence of the edge $E$ in $\mathcal{G}_{n+1}$ implies that the vertex $V$ is not saturated relatively to $e_0$ in $\mathcal{G}_n$, there exists $y \in x\left\langle \alpha_{s_0} \right\rangle$ such that $y \notin \dom\left(\alpha_{t_0}\right)$. 
		We apply Construction \hyperlink{quot'}{B'} in order to define the $\left\langle \alpha_{t_0}\right\rangle$-orbit of $y$ by imposing its cardinality $M$. We obtain an $\mathcal{H}$-preaction $\beta_{n+1}$ defined on a countable set $X_{n+1}$ containing $X_n$ that extends $\beta_n$ and whose $\mathcal{H}$-graph is $\mathcal{G}_{n+1}$. 
		
		Now let us assume that $e_0 \notin \mathcal{E}\left(T\right)$. As the existence of the edge $E$ in $\mathcal{G}_{n+1}$ implies that the vertex $V$ is not saturated relatively to $e_0$ in $\mathcal{G}_n$, there exists $y \in x\left\langle \alpha_{s_0} \right\rangle$ such that $y \notin \dom\left(\tau_{e_0}\right)$. We use Construction \hyperlink{ext'}{A'} in order to define a new element $z$ which is the image of $y$ under $\tau_{e_0}$ and the $\left\langle \alpha_{t_0} \right\rangle$-orbit of $z$ of cardinality $M$. We get an $\mathcal{H}$-preaction $\beta_{n+1}$ that satisfies the required conditions. \\
		\\
		\textbf{2nd case}: $\mathcal{G}_{n+1} = \mathcal{G}_n \cup \{E\} \cup \mathcal{G}^{(i)}$ for some $i \in I$ with $\src(E) \in \mathcal{G}_n$ and $\trg(E) \in \mathcal{G}^{(i)}$. The vertex $V$ derives from an $\left\langle\alpha_{s_0}\right\rangle$-orbit $V = x \left\langle \alpha_{s_0} \right\rangle$ for some $x \in X_n$ and the vertex $W$ derives from an $\left\langle{\alpha^{(i)}_{t_0}}\right\rangle$-orbit $W = x' \left\langle {{\alpha}^{(i)}_{t_0}} \right\rangle \subseteq X^{(i)}$ for some $x' \in X^{(i)}$. Let $(k,l) = (k_{e_0, \src}, k_{e_0, \trg})$.  
		
		Assume first that $e_0 \notin \mathcal{E}\left(T\right)$. As the existence of $E$ in $\mathcal{G}_{n+1}$ implies that $V$ is not saturated relatively to $e_0$ in $\mathcal{G}_n$ and that $W$ is not saturated relatively to $\overline{e_0}$ in $\mathcal{G}^{(i)}$, there exists $y \in x \langle \alpha_{s_0} \rangle$ and $y' \in x'\left\langle \alpha^{(i)}_{t_0} \right\rangle$ such that $\tau_{e_0}$ is not defined on $y$ and ${\tau^{(i)}_{e_0}}^{-1}$ is not defined on $y'$. Thus we can apply Construction \hyperlink{ext}{A} to the $\mathcal{H}$-preaction $\beta_n \sqcup \alpha_i$ and to the elements $y, y'$ and the edge $e_0$ (which amounts to sending the $\left\langle \alpha_{s_0}^k \right\rangle$-orbit of $y$ to the $\left\langle {\alpha_{t_0}^{(i)}}^l \right\rangle$-orbit of $y'$ \textit{via} $t_{e_0}$). The resulting $\mathcal{H}$-preaction $\beta_{n+1}$ is defined on a countable set $X_{n+1}$ in which both $X_n$ and $X^{(i)}$ inject and extends both $\beta_n$ and $\alpha^{(i)}$. Its $\mathcal{H}$-graph is $\mathcal{G}_{n+1}$.
		
		Now let us assume that $e_0 \in \mathcal{E}\left(T\right)$. As $V,W$ are not saturated relatively to $e_0, \overline{e_0}$, respectively, there exists $y \in \dom(\alpha_{s_0}) \setminus \dom\left(\alpha_{t_0}\right)$ and $y' \in \dom\left({{\alpha_{t_0}^{(i)}}}\right) \setminus \dom\left({\alpha_{s_0}^{(i)}}\right)$. So we can apply Construction \hyperlink{ext}{B} to the $\mathcal{H}$-preaction $\beta_n  \sqcup \alpha^{(i)}$ and to the elements $y, y'$ and the edge $e_0$ (which amounts to identifying the $\left\langle \alpha_{s_0}^k\right\rangle$-orbit of $y$ with the $\left\langle {\alpha_{t_0}^{(i)}}^{l}\right\rangle$-orbit of $y'$) to get an $\mathcal{H}$-preaction $\beta_{n+1}$ that extends both $\beta_n$ and $\alpha^{(i)}$ and whose $\mathcal{H}$-graph is $\mathcal{G}_{n+1}$. \end{proof}

The following lemma is crucial, because it relates approximations in $\Sub(\Gamma)$ to approximations on the level of $\mathcal{H}$-graphs. It states that, under some assumptions, an $\mathcal{H}$-graph that contains the $\mathcal{H}$-graph of an $\mathcal{H}$-preaction $\alpha$ is the $\mathcal{H}$-graph of a saturated $\mathcal{H}$-preaction that extends $\alpha$.	

	\begin{lemma}\label{completion1}
		Let $\mathcal{G}$ be a finite, connected and non-saturated $\mathcal{H}$-graph. There exists a saturated $\mathcal{H}$-graph $\tilde{\mathcal{G}}$ such that \begin{itemize}
			\item $\mathcal{G}$ is a sub-$\mathcal{H}$-graph of $\tilde{\mathcal{G}}$;
			\item the quotient $\tilde{\mathcal{G}} / \mathcal{G}$ is an infinite tree.
		\end{itemize}
		Moreover, one can chose $\tilde{\mathcal{G}}$ such that, for every edge $E \in \mathcal{E}\left(\tilde{\mathcal{G}}\right) \setminus \mathcal{E}\left(\mathcal{G}\right)$ labeled $e$ whose target is labeled $(\trg(e), N)$, if the half graph of $E$ is in $\tilde{\mathcal{G}} \setminus \mathcal{G}$, then the integer $N$ is divisible by $k_{e, \trg}$.
	\end{lemma}
	
	\begin{proof}
		Let $V$ be a non-saturated vertex of $\mathcal{G}$ with label $(s,N)$. By hypothesis, there exists an edge $e$ of $\mathcal{H}$ such that $\src(e) = s$ and such that $V$ has strictly less than $N \wedge k_{e,\src}$ incident edges labeled $e$. Let us denote by $d$ the number of these edges and let us enlarge $\mathcal{G}$ by adding $N \wedge k_{e, \src} - d$ new incident edges labeled $e$, with $N \wedge k_{e, \src} - d$ new targets labeled $(\trg(e), M)$ where $M = \frac{N|k_{e, \trg}|}{N \wedge k_{e, \src}}$ so that the Transfer Equation is satisfied and $k_{e, \trg}$ divides $M$. We apply this construction to all the edges in $\mathcal{H}$ relatively to which $V$ is not saturated and to all the vertices of $\mathcal{G}$ which are not saturated and we obtain a new $\mathcal{H}$-graph $\mathcal{G}'$ which satisfies the following properties: 
		\begin{itemize}
			\item $\mathcal{G} \subseteq \mathcal{G}'$;
			\item all vertices of $\mathcal{G}$ are saturated in $\mathcal{G}'$; 
			\item the subgraph induced by the set of vertices of $\mathcal{G}' \setminus \mathcal{G}$ is a forest. 
		\end{itemize}
		By induction, we construct an increasing sequence of $\mathcal{H}$-graphs $\mathcal{G}_n$ such that \begin{itemize}
			\item all vertices of $\mathcal{G}_n$ are saturated in $\mathcal{G}_{n+1}$;
			\item the subgraph induced by the set of vertices of $\mathcal{G}_{n+1} \setminus \mathcal{G}$ is a forest.
		\end{itemize}
		The $\mathcal{H}$-graph defined by $\cup_{n \in \mathbb{N}}\mathcal{G}_n$ satisfies the required conditions. 
	\end{proof}
	
	The following lemma provides an example of construction of an $\mathcal{H}$-graph.
	
	\begin{lemma}\label{boucle}
		Assume that $\mathcal{H}$ is neither a loop one of whose labels is equal to $\pm 1$, nor a segment with both labels equal to $\pm 2$. Let $e \in \mathcal{E}\left(\mathcal{H}\right)$ and $N \in \mathbb{N} \cup \{\infty\}$ such that $k_{e, \src}$ divides $N$ if $\trg(e) \neq \src(e)$. Then there exists a finite $\mathcal{H}$-graph \begin{itemize}
			\item that is non-simply connected;
			\item that contains a non-saturated vertex $V$ labeled $(\src(e), N)$;
			\item that contains another vertex that is non-saturated.
		\end{itemize}
	\end{lemma}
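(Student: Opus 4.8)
The plan is to construct the required graph explicitly as a finite labelled graph satisfying conditions $(1)$--$(5)$ of Definition \ref{ggraph}; by Lemma \ref{arete} it is then automatically the $\mathcal{H}$-graph of an $\mathcal{H}$-preaction, although for the statement only the combinatorial object is needed. Write $s=\src(e)$, $t=\trg(e)$, $k=k_{e,\src}$, $l=k_{e,\trg}$, and set $d=\frac{N}{N\wedge k}$. The Transfer Equation says that across any edge of type $e'$ the ``flow'' $\frac{(\text{label})}{(\text{label})\wedge k_{e',\src}}=\frac{(\text{label})}{(\text{label})\wedge k_{e',\trg}}$ is well defined; the strategy is to build a short cycle whose flow through $e$ equals $d$, to hang $V$ off it through an edge of type $e$, and to read non-saturation off the incidence slots that the labels leave free. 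Since $\mathcal{H}$ is reduced, a non-loop edge satisfies $|k|,|l|\ge 2$, and for such an $e$ the hypothesis gives $k\mid N$, so that $N\wedge k=|k|$ and $d=N/|k|$.

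First I would produce the cycle. If $e$ is not a loop, I form a square over $e$ of flow $d$: vertices $X_1,X_2$ of type $s$ labelled $(s,N)$ and $Y_1,Y_2$ of type $t$ labelled $(t,d|l|)$, joined by the four type-$e$ edges $X_1\!\to\!Y_1$, $X_2\!\to\!Y_1$, $X_2\!\to\!Y_2$, $X_1\!\to\!Y_2$. On each edge both sides of the Transfer Equation equal $d$, each $X_i$ carries two edges labelled $e$ (and $2\le|k|$), each $Y_j$ two edges labelled $\overline{e}$ (and $2\le|l|$), and $X_1 Y_1 X_2 Y_2$ is a reduced $4$-cycle, so the graph is non simply connected. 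If $e$ is a loop ($s=t$), I use instead a bigon over $e$: two parallel type-$e$ edges from a vertex $A$ to a vertex $B$ of type $s$, whose labels $d'|k|$ and $d'|l|$ realise whichever flow $d'\in\{\frac{N}{N\wedge k},\frac{N}{N\wedge l}\}$ matches the port of $V$ used below; here $A$ always keeps a free incoming slot and $B$ a free outgoing slot, so the bigon already exhibits two non-saturated vertices.

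The second step is to incorporate $V$ and to guarantee the two required non-saturated vertices. In the non-loop case I take $V:=X_1$ when $X_1$ has a free slot (for instance as soon as $|k|\ge 3$, or $s$ carries another edge of $\mathcal{H}$); otherwise $|k|=2$ forces $|l|\ge 3$ and I attach $V$ as a pendant $V\to Y_1$ of type $e$, after which $V$ keeps a free outgoing slot ($1<|k|$) and $Y_2$ a free incoming slot ($2<|l|$). In the loop case I connect $V$ to the bigon by one type-$e$ edge chosen so that the flows at its two ends agree: if $|l|\ge 3$ I add $V\to B$ (outgoing port); if $|k|\ge 3$ I build the bigon with flow $\frac{N}{N\wedge l}$ and add an edge into $V$ (incoming port); and if $|k|=|l|=2$, the edge $V\to A$ already has matching flow since $\gcd(d|k|,l)=|k|$. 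In every case $V$ is non-saturated, either because it retains a free outgoing slot (as $N\wedge k\ge 2$) or because it has no incoming edge of type $e$ while $N\wedge l\ge 1$.

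The delicate point, and the only place where the hypothesis enters, is guaranteeing a second non-saturated vertex together with the flow-matched attachment. A cycle vertex over $e$ gets saturated only when its label caps its degree at the number of edges the cycle already imposes, which over $e$ alone happens exactly in the range $|k|,|l|\le 2$ with $s,t$ of $\mathcal{H}$-degree one, i.e. for the segment $(2,\pm2)$ and the loop $\BS(1,\pm1)$ --- precisely the graphs excluded by non-amenability. In the remaining genuinely tight admissible cases (for example the segment $(2,3)$) the square is what saves the day: its two $Y$-vertices each keep a free slot, one of which attaches $V$ and the other serves as the second non-saturated vertex. Whenever $e$ itself is too small (a loop carrying a label $\pm1$), the non-amenability of $\Gamma$ forces a second edge of $\mathcal{H}$ at $s$ or $t$, which provides both the free slot and, after realising a connecting path with labels solving the single Transfer Equation each new edge imposes, the route along which $V$ is grafted onto the cycle. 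Carrying out this bookkeeping --- matching flows along the connecting path while keeping one incidence slot free --- is the main technical burden of the proof.
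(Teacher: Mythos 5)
Your overall strategy is the same as the paper's: build an explicit small non-simply-connected labelled graph (a square over $e$ when $e$ is not a loop, a bigon when it is), check conditions $(1)$--$(5)$ of Definition \ref{ggraph} directly, and read non-saturation off the free incidence slots. The cases you actually carry out are correct: the square works whenever $|k|\ge 3$ (or $s$ carries a second edge of $\mathcal{H}$), your pendant fix works when $|k|=2$ and $|l|\ge 3$, and your three flow-matched attachments of $V$ to the bigon are fine when $e$ is a loop with $|k|,|l|\ge 2$ (your observation that $V$ is then always non-saturated relative to $e$ or to $\overline{e}$ is the right one).

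The gap is in the tight cases, which is exactly where the paper spends most of its proof. First, the claim ``otherwise $|k|=2$ forces $|l|\ge 3$'' is false: non-amenability only excludes $\mathcal{H}$ \emph{being} the segment $(2,\pm 2)$, not $\mathcal{H}$ \emph{containing} a non-loop edge $e$ with $|k|=|l|=2$ whose source has degree one (take $\mathcal{H}$ a segment labelled $(2,2)$ with an extra loop labelled $(3,5)$ at $t$). In that situation every vertex of your square is saturated relative to $e$ or $\overline{e}$, the type-$s$ vertices are fully saturated, and the pendant $V\to Y_1$ is forbidden because $Y_1$ already carries $|l|=2$ edges labelled $\overline{e}$. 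One then has to give $V$ a \emph{single} type-$e$ edge and manufacture the cycle over the auxiliary edge $f$, solving the Transfer Equations along a connecting path; this, together with the loop case where a label of $e$ is $\pm 1$, is precisely what you defer as ``the main technical burden''. The paper's proof consists almost entirely of these constructions (its Figures \ref{figuredeuxsegments}, \ref{figureuneboucleunsegment} and \ref{figuredeuxboucles}), each with concrete labels verifying the Transfer Equation and an identified non-saturated vertex, so the deferred bookkeeping is not routine overhead but the substance of the lemma. As written, the proposal proves the generic cases and sketches, but does not establish, the critical ones.
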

	
	\begin{proof}
		Let $e \in \mathcal{E}(\mathcal{H})$ and $N \in \mathbb{N} \cup \{\infty\}$ satisfying the assumptions of Lemma \ref{boucle}. Let us denote $(s, t) = \left(\src(e), \trg(e)\right)$ and $(k,l) = \left(k_{e, \src}, k_{e, \trg}\right)$. \\
		\\
		Assume first that $e$ is not a loop. 
		\begin{itemize}
			\item
			If $|k| > 2$ or $|l| > 2$ (assume for instance $|l| > 2$), one construct an edge labeled $e$ with source a vertex $V$ labeled $(s,N)$ and target a vertex $W$ labeled $\left(t, \frac{N|l|}{N \wedge k}\right)$. Then we build two new vertices $X, Y$ labeled $(t,\frac{N|k|}{N \wedge k})$ and two edges labeled $\overline{e}$ with source $W$ and targets $X$ and $Y$. Finally we construct a new vertex $Z$ labeled $(s, \frac{N|l|}{N \wedge k})$ and edges labeled $e$ with sources $X$ and $Y$ and target $Z$. As $|k| \geq 2$ divides $N$, the vertex $V$ is not saturated relatively to $e$, and as $|l|\geq 3$, the vertex $Z$ is not saturated relatively to $\overline{e}$ (\textit{cf.} Figure \ref{figurelosange}). \\
			\begin{figure}[ht]
				\center
				\begin{tikzpicture}
					\node[draw,circle,fill=gray!50] (a) at (0,0) {$N$};
					\node[draw,circle,fill=white] (b) at (3,0) {$\frac{N|l|}{N \wedge k}$};
					\node[draw,circle,fill=gray!50] (c) at (5,1) {$\frac{N|k|}{N \wedge k}$};
					\node[draw,circle,fill=gray!50] (d) at (5,-1) {$\frac{N|k|}{N \wedge k}$};
					\node[draw,circle,fill=white] (e) at (7,0) {$\frac{N|l|}{N \wedge k}$};
					\draw[>=latex, directed] (a) -- (b);
					\draw[>=latex, directed] (c) -- (b);
					\draw[>=latex, directed] (d) -- (b);
					\draw[>=latex, directed] (c) -- (e);
					\draw[>=latex, directed] (d) -- (e);
				\end{tikzpicture}
				\caption{Case where $e$ is not a loop and $\max\left(|k|, |l|\right) > 2$.}
				\label{figurelosange}
			\end{figure}
			
			The case where $|l|=2$ and $|k| \geq 3$ is similar (one can take four vertices $W = V, X, Y, Z$ labeled $(s,N), \left(t, \frac{N|k|}{N \wedge k}\right), \left(t, \frac{N|k|}{N \wedge k}\right)$ and $(s,N)$, respectively). 
	
			\item
			If $|k|=|l|=2$, then as $\Gamma$ is non-amenable, $\mathcal{H}$ contains another edge $f$, and one can assume that $\src(f) = s$. Let us denote $(k_{f, \src}, k_{e, \trg}) = (m,n)$ and $u = \trg(f)$.  
			\begin{itemize}
				\item If $f$ is not a loop, then $|n| \geq 2$. Hence we can construct an edge labeled $e$ with source a vertex $V$ labeled $(s, N)$ and target a vertex $W$ labeled $\left(t, \frac{2N}{N\wedge 2}\right)$, an edge labeled $f$ with source $V$ and target a vertex $X$ labeled $\left(u, \frac{N|n|}{N \wedge m} \right)$, a new vertex $Y$ labeled $(s, N)$, an edge labeled $e$ with source $Y$ and target $W$ and an edge labeled $f$ with source $Y$ and target $X$. 
				
				\begin{figure}[ht]
					\center
					\begin{tikzpicture}
						\node[draw,circle,fill=gray!50] (a) at (-5,1) {};
						\node[draw,circle,fill=white] (b) at (-3,0) {};
						\node[draw,circle,fill=black] (c) at (-1,1) {};
						\draw[gray, >=latex, directed] (b) -- (a) node[midway, below left]{$f$} node[very near start, below left]{$m$} node[very near end, below left]{$n$};
						\draw[>=latex, directed] (b) -- (c) node[midway, below right]{$e$} node[very near start, below right]{$k$} node[very near end, below right]{$l$};
						\node[draw,circle,fill=gray!50] (A) at (1,0) {$\frac{N|n|}{N \wedge m}$};
						\node[draw,circle,fill=white] (B) at (6,0) {$N$};
						\node[draw,circle,fill=white] (C) at (1,2) {$N$};
						\node[draw,circle,fill=black] (D) at (6,2) {\color{white} $\frac{2N}{N \wedge 2}$};
						\draw[gray, >=latex, directed] (B) -- (A);
						\draw[gray, >=latex, directed] (C) -- (A);
						\draw[>=latex, directed] (B) -- (D);
						\draw[>=latex, directed] (C) -- (D);
					\end{tikzpicture}
					\caption{Case where $e$ is not a loop, and $|k|=|l|=2$, and $f$ is not a loop.}
					\label{figuredeuxsegments}
				\end{figure}
				
				Moreover, as $2$ divides $N$, neither $V$ nor $Y$ is saturated relatively to $\overline{e}$ (\textit{cf.} Figure \ref{figuredeuxsegments}).
				\item If $f$ is a loop, we build an edge labeled $f$ between a vertex $V$ labeled $(s, N)$ and a vertex $U$ labeled $(s, M)$ with $M := \frac{N|n|}{N \wedge m}$. Then we build an edge labeled $e$ with source $U$ and target a new vertex $X$ labeled $\left(t, \frac{2M}{M \wedge 2}\right)$ and an edge labeled $e$ with source $V$ and target a vertex $W$ labeled $\left(t,\frac{2N}{N \wedge 2} \right)$. Finally, we create two vertices $Y$ and $Z$ labeled by $(s, N)$ and $(s, M)$ respectively, an edge labeled $f$ with source $Y$ and target $Z$, an edge labeled $e$ with source $Z$ and target $X$ and an edge labeled $e$ with source $Y$ and target $W$ (\textit{cf.} Figure \ref{figureuneboucleunsegment}). As $|k| \geq 2$ divides $N$, neither $V$ nor $Y$ is saturated relatively to $e$.
				\begin{figure}[ht]
					\center
					
					\begin{tikzpicture}
						\node[draw,circle,fill=gray!50] (a) at (-5,0) {};
						\node[draw,circle,fill=white] (b) at (-3,0) {};
						\draw[>=latex, directed] (a) -- (b) node[midway, below]{$e$} node[very near start, below]{$k$} node[very near end, below]{$l$};
						\draw[gray, >=latex, directed] (a) to [out=135,in=45,looseness=20] node[above]{$f$} node[very near start, left]{$m$} node[very near end, right]{$n$} (a);
						\node[draw,circle,fill=gray!50] (A) at (0,-1.5) {$N$};
						\node[draw,circle,fill=white] (B) at (3,-3) {$\frac{2N}{N \wedge 2}$};
						\node[draw,circle,fill=gray!50] (C) at (0,1.5) {$M$};
						\node[draw,circle,fill=white] (D) at (3,3) {$\frac{2M}{M \wedge 2}$};
						\node[draw,circle,fill=gray!50] (E) at (6,-1.5) {$N$};
						\node[draw,circle,fill=gray!50] (F) at (6,1.5) {$M$};
						\draw[gray, >=latex, directed] (A) to (C);
						\draw[gray, >=latex, directed] (E) to (F);
						\draw[>=latex, directed] (C) to (D);
						\draw[>=latex, directed] (F) to (D);
						\draw[>=latex, directed] (A) to (B);
						\draw[>=latex, directed] (E) to (B);
					\end{tikzpicture}
					\caption{Case where $e$ is not a loop, and $|k|=|l|=2$, and $f$ is a loop.}
					\label{figureuneboucleunsegment}
				\end{figure}
				
			\end{itemize}
			
		\end{itemize}
		Now assume that $e$ is a loop. 
		
		\begin{itemize}
			\item If $|k|\geq 2$ and $|l| \geq 2$, we start with a vertex $V$ labeled $(s, N)$ and create an edge labeled $e$ with source $V$ and target a new vertex $W$ labeled $(s,M)$ with $M = \frac{N|l|}{N \wedge k}$. Then we build an edge labeled $e$ with source $W$ and target a new vertex $Y$ labeled $\left(s, \frac{M|l|}{M \wedge k}\right)$ and an edge labeled $\overline{e}$ with source $W$ and target a new vertex $X$ labeled $\left(s, \frac{N|k|}{N \wedge k} \right)$. Finally we define a new vertex $Z$ labeled $(s,M)$, an edge labeled $e$ with source $X$ and target $Z$ and an edge labeled $\overline{e}$ with source $Y$ and target $Z$ (\textit{cf.} Figure \ref{figurebsmn}).
			
			\begin{figure}[ht]
				\center
				\begin{tikzpicture}
					\node[draw,circle,fill=gray!50] (a) at (-3,0) {};
					\draw[>=latex, directed] (a) to [out=135,in=45,looseness=20] node[above]{$e$} node[very near start, below left]{$k$} node[very near end, below right]{$l$} (a);
					\node[draw,circle,fill=gray!50] (A) at (0,2) {$N$};
					\node[draw,circle,fill=gray!50] (B) at (4,2) {$M$};
					\node[draw,circle,fill=gray!50] (C) at (8,2) {$\frac{N|k|}{N \wedge k}$};
					\node[draw,circle,fill=gray!50] (D) at (4,0) {$\frac{M|l|}{M \wedge k}$};
					\node[draw,circle,fill=gray!50] (E) at (8,0) {$M$};
					\draw[>=latex, directed] (A) to (B);
					\draw[>=latex, directed] (C) to (B);
					\draw[>=latex, directed] (B) to (D);
					\draw[>=latex, directed] (E) to (D);
					\draw[>=latex, directed] (C) to (E);
				\end{tikzpicture}
				\caption{Case where $e$ is a loop, and $\min\left(|k|,|l|\right)\geq 2$.}
				\label{figurebsmn}
			\end{figure}
			
			The vertex $Y$ is not saturated relatively to $e$, and, as $|k|\geq 2$ divides $N$, the vertex $V$ is not saturated relatively to $e$.
			\item If $|k|=1$ or $|l|=1$ (for instance $|k|=1$), the graph $\mathcal{H}$ contains another edge $f$ with source $s$. Denote $(k_{f, \src}, k_{f, \trg}) = (m,n)$. By the previous case, one can assume that $f$ is a loop and that $|m|=1$. One create a vertex $V$ labeled $(s,N)$, an edge labeled $e$ connecting $V$ to a new vertex $W$ labeled $(s, N|l|)$, an edge labeled $f$ connecting $V$ to a new vertex $X$ labeled $(s, N|n|)$, a new vertex $Y$ labeled $(s, N|ln|)$ and edges labeled $f$ and $e$ connecting $W$ to $Y$ and $X$ to $Y$ respectively (\textit{cf.} Figure \ref{figuredeuxboucles}). Then $V$ is not saturated relatively to $\overline{e}$, and $Y$ is not saturated relatively to $e$.
			\begin{figure}[ht]
				\center
				\begin{tikzpicture}
					\node[draw,circle,fill=gray!50] (a) at (-5,1) {};
					\draw [gray, >=latex, directed] (a) to [out=300,in=30,looseness=20] node[right]{$f$} node[very near start, below]{$m$} node[very near end, above]{$n$} (a);
					\draw [>=latex, directed] (a) to [out=240,in=150,looseness=20] node[left]{$e$} node[very near start, below]{$k$} node[very near end, above]{$l$} (a);
					\node[draw,circle,fill=gray!50] (A) at (0,2) {$N$};
					\node[draw,circle,fill=gray!50] (B) at (4,2) {$N|l|$};
					\node[draw,circle,fill=gray!50] (C) at (0,-1) {$N|n|$};
					\node[draw,circle,fill=gray!50] (D) at (4,-1) {$N|ln|$};
					\draw [>=latex, directed] (A) to (B);
					\draw [gray, >=latex, directed] (A) to (C);
					\draw [>=latex, directed] (C) to (D);
					\draw [gray, >=latex, directed] (B) to (D);
				\end{tikzpicture}
				\caption{Case where $e$ is a loop, and $\min\left(|k|, |l|\right) = 1$.}
				\label{figuredeuxboucles}
			\end{figure}
			
		\end{itemize}
		
	\end{proof}
	
	\section{Phenotype of a GBS group}
	
	In \cite{solitar1}, the \textbf{phenotype} of a subgroup $\Lambda$ of $\BS(m,n)$ is defined as follows: 
	\begin{itemize}
		\item if $\Lambda \cap \langle b \rangle = \{1\}$, then $\bm{\Ph}_{m,n}\left(\Lambda\right) = \infty$;
		\item otherwise, denoting by $\Lambda \cap \langle b \rangle =: \langle b^N \rangle$ (for $N \in \mathbb{N}^*$), then $\bm{\Ph}_{m,n}(\Lambda) = \Ph_{m,n}(N)$ where $\Ph_{m,n}(N) := \prod_{p \in \mathcal{P}, |m|_p = |n|_p \text{ \ and \ } |N|_p > |n|_p}p^{|N|_p}$.
	\end{itemize}
	
	The authors of \cite{solitar1} proved that this quantity is invariant under conjugation and that the set $\mathcal{Q}_{m,n}$ of integers satisfying ${\Ph}_{m,n}^{-1}(N) \neq \emptyset$ is infinite. Moreover, they proved the following result:
	
	\begin{theorem}
		Let $(m,n) \in \left(\mathbb{Z} \setminus \{0\}\right)^2$ such that $|m| \neq 1$ and $|n| \neq 1$. Then, for any $N \in \mathcal{Q}_{m,n} \cup \{\infty\}$: \begin{itemize}
			\item the action of $\BS(m,n)$ by conjugation on $\mathcal{K}(\BS(m,n)) \cap \bm{\Ph}_{m,n}^{-1}(N)$ is topologically transitive. In particular, $\mathcal{K}\left(\BS(m,n)\right) \cap \bm{\Ph}_{m,n}^{-1}(N)$ contains a dense $G_{\delta}$ set of elements whose orbits are dense;
			\item if $N \in \mathbb{N}$, the piece $\bm{\Ph}_{m,n}^{-1}(N)$ is open;
			\item it is also closed if and only if $|m|=|n|$;
			\item the piece $\bm{\Ph}_{m,n}^{-1}(\infty)$ is closed and non-empty. 
		\end{itemize}  
	\end{theorem}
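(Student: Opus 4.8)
The plan is to read everything through the $\mathcal{H}$-graph formalism of Section~\ref{corr}, specialized to the case where $\mathcal{H}$ is the single loop of $\BS(m,n)$: one vertex $v$ with $a_v = b$ and one non-tree edge labelled $(k_{e,\src},k_{e,\trg}) = (n,m)$. For a subgroup $\Lambda$, the vertices of its $\mathcal{H}$-graph carry the integers $N = [\langle b\rangle : \gamma^{-1}\Lambda\gamma \cap \langle b\rangle]$; the base vertex carries the label $N_0$ with $\Lambda \cap \langle b\rangle = \langle b^{N_0}\rangle$, and by definition $\bm{\Ph}_{m,n}(\Lambda) = \Ph_{m,n}(N_0)$. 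The arithmetic backbone I would take for granted (from \cite{solitar}) is twofold: $\Ph_{m,n}$ is constant along a connected $\mathcal{H}$-graph, i.e.\ invariant under the Transfer Equation $\frac{N}{N\wedge n} = \frac{M}{M\wedge m}$ of Definition~\ref{ggraph} (whence conjugation-invariance of $\bm{\Ph}_{m,n}$), and $\Lambda$ lies in $\mathcal{K}(\BS(m,n))$ exactly when its $\mathcal{H}$-graph is infinite.

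The topological bullets (second, third, fourth) reduce to one clopen computation. Finite phenotype is equivalent to the base label $N_0$ being finite, i.e.\ to the $b$-cycle through the base vertex being finite; and for fixed $N_0$ the condition $\Lambda \cap \langle b\rangle = \langle b^{N_0}\rangle$ is the basic clopen set $\mathcal{V}(\{b^j : 0<j<N_0\}, \{b^{N_0}\})$. Hence $\bm{\Ph}_{m,n}^{-1}(N) = \bigsqcup_{N_0 \in \Ph_{m,n}^{-1}(N)} \{\Lambda : \Lambda\cap\langle b\rangle = \langle b^{N_0}\rangle\}$ is a union of clopen sets, so open, and its complement $\bm{\Ph}_{m,n}^{-1}(\infty)$ is closed; non-emptiness of the latter is witnessed by $\{1\}$, or by $\langle t\rangle$ since $t$ is hyperbolic and $b$ elliptic. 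Since each finite piece is open, any sequence escaping $\bm{\Ph}_{m,n}^{-1}(N)$ can only limit into $\bm{\Ph}_{m,n}^{-1}(\infty)$, so closedness amounts to forbidding such escape. When $|m|=|n|$ every prime is balanced and a short $p$-adic argument shows $\Ph_{m,n}^{-1}(N)$ is a \emph{finite} set of integers $N_0$, so the union above is a finite union of clopen sets and the piece is closed. When $|m|\neq|n|$ an unbalanced prime $p$ leaves $|N_0|_p$ unconstrained by the phenotype, so $\Ph_{m,n}^{-1}(N)$ is infinite; the cyclic subgroups $\langle b^{N_0^{(0)}p^k}\rangle$ (with $N_0^{(0)}$ realizing $N$) all lie in $\bm{\Ph}_{m,n}^{-1}(N)$, and any convergent subsequence has trivial intersection with $\langle b\rangle$, limiting in $\bm{\Ph}_{m,n}^{-1}(\infty)$, so the piece is not closed.

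The heart is the first bullet. A neighbourhood basis of a pointed action is given by the $\simeq_R$ classes $V_R$ of Section~\ref{schreier}, which in the $\mathcal{H}$-graph picture prescribe the radius-$R$ ball around the base vertex. Given nonempty open $U, V \subseteq \mathcal{K}(\BS(m,n))\cap\bm{\Ph}_{m,n}^{-1}(N)$, I would pick $\Lambda_U\in U$, $\Lambda_V\in V$ and record the finite connected balls $B_U, B_V$ of their infinite saturated $\mathcal{H}$-graphs, all of whose labels have phenotype $N$. I then build one infinite saturated $\mathcal{H}$-graph $\mathcal{G}$ of phenotype $N$ containing disjoint copies of $B_U$ and $B_V$: join them by a finite transfer-compatible bridge (using that any two labels of phenotype $N$ are linked by a path of Transfer-Equation moves inside the class), realize the resulting finite connected $\mathcal{H}$-graph via Lemma~\ref{arete}, and saturate it into an infinite $\mathcal{H}$-graph by Lemma~\ref{completion}, whose divisibility clause together with transfer-invariance keeps every new vertex at phenotype $N$. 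By Proposition~\ref{arith} and the dictionary of Section~\ref{corr}, $\mathcal{G}$ is the $\mathcal{H}$-graph of a subgroup $\Lambda\in\mathcal{K}(\BS(m,n))\cap\bm{\Ph}_{m,n}^{-1}(N)$; based at the copy of $B_U$ it lies in $U$, and the base change to the copy of $B_V$ (which is conjugation) puts it in $V$, so for the corresponding $\gamma$ one has $\gamma\Lambda\gamma^{-1}\in \gamma U\gamma^{-1}\cap V$, giving topological transitivity. The dense $G_\delta$ statement is then automatic from Baire's theorem, the set of dense-orbit points being $\bigcap_W \bigcup_{\gamma}\gamma^{-1}W$ over a countable basis $W$, each term open and dense by transitivity.

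The step I expect to be the main obstacle is the phenotype bookkeeping inside the gluing: one must guarantee simultaneously that the bridge and the completion keep $\bm{\Ph}_{m,n}$ equal to $N$ at \emph{every} vertex (the arithmetic connectivity of a phenotype class under the Transfer Equation), that the completed graph is genuinely infinite so that $\Lambda$ lands in the perfect kernel — grafting, where needed, the non-simply-connected gadgets of Lemma~\ref{boucle} at a non-saturated vertex of the prescribed label — and that the degree constraints (4)–(5) of Definition~\ref{ggraph} are never violated when the bridge is inserted. Everything else is either the clopen computation above or a direct application of the realization lemmas.
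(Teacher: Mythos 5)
First, note that the paper does not prove this theorem directly: it is quoted from \cite{solitar} and then recovered as the single-loop special case of the paper's general results (Proposition \ref{topologiepieces}, Theorems \ref{kernel} and \ref{toptrans}). Your proposal is exactly that specialization, so the architecture matches the paper's. Your treatment of the last three bullets is complete and in fact more elementary than the paper's general argument: the clopen decomposition $\bm{\Ph}_{m,n}^{-1}(N)=\bigsqcup_{N_0\in\Ph_{m,n}^{-1}(N)}\mathcal{V}(\{b^j: 0<j<N_0\},\{b^{N_0}\})$, the finiteness of $\Ph_{m,n}^{-1}(N)$ when $|m|=|n|$, and the explicit witnesses $\langle b^{N_0p^k}\rangle$ accumulating on $\bm{\Ph}_{m,n}^{-1}(\infty)$ when $|m|\neq|n|$ all check out (the paper instead builds a perfect-kernel witness with the graph machinery, which is more than the third bullet requires).

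For the first bullet, however, there is a genuine gap at precisely the point you flag and then take for granted. The assertion that \emph{any two labels of phenotype $N$ are linked by a path of Transfer-Equation moves}, realizable as an actual bridge between two prescribed non-saturated vertices of $B_U$ and $B_V$ without violating constraints (4)--(5) of Definition \ref{ggraph} and without touching the $R$-balls, is not a formal bookkeeping step: it is the entire content of the base case of Lemma \ref{liaison} (the algorithm of \cite{hautetransitivite}). Concretely, one must exhibit the chain: starting from a label $N'$, iterate $t_e$ to apply $\varphi_{m,n}$ until all $p$-adic valuations with $|N'|_p\leq|m|_p$ or $|n|_p<|m|_p$ are killed, then apply $bt_e^{-1}$ repeatedly to kill the remaining unbalanced valuations, landing on the canonical label $\Ph_{m,n}(N')=N$ itself, from which the two sides can be matched by one last edge (this uses $|m|,|n|\geq 2$ to guarantee the needed non-saturation at each step). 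Without this explicit reduction your bridge is only asserted to exist, and the transitivity bullet is unproved. A second, smaller omission: to guarantee the glued graph agrees with $\mathcal{G}^{(U)}$ and $\mathcal{G}^{(V)}$ on the $R$-balls, the bridge must be attached \emph{outside} $B_U$ and $B_V$, which in the paper is arranged by first completing $K_i$ (Lemma \ref{completion}) and exiting the ball along a reduced word (Lemmas \ref{unesortie} and \ref{backtrack}); attaching directly at a boundary vertex of $B_U$ risks altering the $R$-ball of the Schreier graph. Everything else in your sketch (Lemmas \ref{arete}, \ref{completion}, \ref{retour}, Proposition \ref{arith}, and the Baire argument) is applied correctly.
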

	In \cite{solitar2}, the same authors even proved that the action of $\BS(m,n)$ by conjugation on $\mathcal{K}(\BS(m,n)) \cap \bm{\Ph}_{m,n}^{-1}(N)$ is highly topologically transitive if $\bm{\Ph}_{m,n}^{-1}(N) \neq \emptyset$.
	The goal of this section is to define a new arithmetical invariant for the action by conjugation of any GBS group on its space of subgroups. We need some arithmetical tools. 
	
	\begin{definition}\label{defphenotype}
		Let us define the following subset of prime numbers: \[\mathcal{C}_{\mathcal{H}} = \left\{p \in \mathcal{P} \mid \sum_{i=1}^{r}|k_i|_p = \sum_{i=1}^r |l_i|_p
		\text{ for every cycle labeled } (k_1,l_1),...,(k_r,l_r) \right\}\]	(where $\sum_{i=1}^{0}|l_i|_p=0$). Let $v$ be a vertex of $\mathcal{H}$ and, for every $N \in \mathbb{N}$, let \begin{align*}
		\mathcal{P}_{\mathcal{H}, v}(N) := &\bigg\{p \in \mathcal{C}_{\mathcal{H}} \mid  
		|N|_p > \sum_{i=1}^{r}|k_i|_p - \sum_{i=1}^{r-1}|l_i|_p \text{ for every edge path based at $v$} \\ &\text{ labeled } (k_1,l_1),...,(k_{r},l_{r}) \bigg\}.\end{align*} One defines
		\[ \Ph_{\mathcal{H}, v}(N) := \prod_{p \in \mathcal{P}_{\mathcal{H},v}(N)}p^{|N|_p}. \]
		If $N = \infty$, one defines \[ \Ph_{\mathcal{H},v}(\infty) = \infty. \]
		We call this quantity the \textbf{$\mathcal{H}$-phenotype of the integer $N$ relatively to the vertex $v$}.
	\end{definition}
	
	\begin{remark}
		If $\mathcal{H}$ consists of a loop (based at $v$) with source labeled $n$ and target labeled $m$, then one has $\Ph_{\mathcal{H}, v} = \Ph_{m,n}$ as defined in \cite{solitar1}. 
	\end{remark}
	
	\begin{remark}\label{simplification}Notice that, in Definition \ref{defphenotype}, one can replace ``for every edge path based at $v$'' by ``for every \textit{simple} edge path based at $v$ that either is not a cycle or is a loop'' and ``for every cycle'' by ``for every \textit{simple} cycle'' (which need not contain $v$!)
	\end{remark}
	
	\begin{remark}
		The set $\Ph_{\mathcal{H},v}^{-1}(M)$ is non-empty if and only if for all prime number $p$ dividing $M$, one has $p \in \mathcal{P}_{\mathcal{H},v}(M)$. In that case, one has $M = \Ph_{\mathcal{H},v}(M)$. We denote \[\mathcal{Q}_{\mathcal{H}, v} := \Ph_{\mathcal{H},v}\left(\mathbb{N}\right).\]
	\end{remark}
	
	\subsection{\texorpdfstring{Some arithmetical properties of the $\mathcal{H}$-phenotype}{Some arithmetical properties of the H-phenotype}}
	
	The goal of this subsection is to establish some arithmetical properties of the $\mathcal{H}$-phenotype of an integer in order to define the $\mathcal{H}$-phenotype of a transitive $\mathcal{H}$-preaction (or, equivalently, of a connected $\mathcal{H}$-graph). In particular, we show that the $\mathcal{H}$-phenotype relatively to a given vertex $s$ is constant on the set of vertices of type $s$ of a connected $\mathcal{H}$-graph (\textit{cf.} Proposition \ref{stab1}). So considering saturated $\mathcal{H}$-preactions allows us to define the $\mathcal{H}$-phenotype of a subgroup. 
	
	\begin{proposition}\label{pgcd}
		Let $k_1,l_1,...,k_r,l_r$ be integers.
		Let $N_1,...,N_{r+1} \in \mathbb{N}$ satisfying \[\frac{N_i}{N_i \wedge k_i} = \frac{N_{i+1}}{N_{i+1} \wedge l_i} \text{ \ for every $i \in \llbracket 1,r \rrbracket$}.\]
		Then, for every prime number $p$ satisfying $|N_1|_p > \max_{i \in \llbracket 1,r \rrbracket}\left(\sum_{j=1}^{i}|k_j|_p - \sum_{j=1}^{i-1}|l_j|_p\right)$ one has \[|N_{r+1}|_p = |N_1|_p - \sum_{j=1}^{r}|k_j|_p + \sum_{j=1}^{r}|l_j|_p.\]
	\end{proposition}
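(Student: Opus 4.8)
The plan is to reduce the Transfer Equation to an additive recursion on $p$-adic valuations. Fix a prime $p$ satisfying the hypothesis and abbreviate $a_i = |N_i|_p$, $\kappa_i = |k_i|_p$ and $\lambda_i = |l_i|_p$. Since the $p$-adic valuation of a greatest common divisor is the minimum of the valuations, one has $|N_i \wedge k_i|_p = \min(a_i, \kappa_i)$ and $|N_{i+1} \wedge l_i|_p = \min(a_{i+1}, \lambda_i)$. Applying $|\cdot|_p$ to the hypothesis $\frac{N_i}{N_i \wedge k_i} = \frac{N_{i+1}}{N_{i+1}\wedge l_i}$ and using the elementary identity $a - \min(a,b) = \max(a-b,0)$, I obtain, for every $i \in \llbracket 1, r\rrbracket$,
\[
\max(a_i - \kappa_i,\, 0) = \max(a_{i+1} - \lambda_i,\, 0).
\]

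I would then prove by induction on $i \in \llbracket 1, r+1\rrbracket$ the formula $a_i = a_1 - \sum_{j=1}^{i-1}\kappa_j + \sum_{j=1}^{i-1}\lambda_j$, carrying along the auxiliary inequality $a_i > \kappa_i$ for indices $i \le r$. The base case $i=1$ is the trivial identity, and $a_1 > \kappa_1$ is the hypothesis specialized to the index $i=1$, whose right-hand term equals $\kappa_1$. For the inductive step, the induction formula turns the desired inequality $a_i > \kappa_i$ into $a_1 > \sum_{j=1}^{i}\kappa_j - \sum_{j=1}^{i-1}\lambda_j$, which is precisely the $i$-th term bounded by the hypothesis; hence $a_i > \kappa_i$, so the left-hand side of the displayed equation equals $a_i - \kappa_i > 0$. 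A strictly positive maximum must be attained at its first argument, so $a_{i+1} - \lambda_i = a_i - \kappa_i$, that is $a_{i+1} = a_i - \kappa_i + \lambda_i$; substituting the induction formula for $a_i$ gives the formula for $a_{i+1}$. Evaluating at $i = r+1$ yields the asserted equality.

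The only delicate point is the propagation of the inequality $a_i > \kappa_i$: this is exactly what ensures that each minimum appearing in the Transfer Equation is attained at $\kappa_i$ (resp. $\lambda_i$) rather than at $a_i$, so that the recursion linearizes and telescopes. The hypothesis $|N_1|_p > \max_{i}\big(\sum_{j=1}^{i}|k_j|_p - \sum_{j=1}^{i-1}|l_j|_p\big)$ is calibrated precisely so that this strict inequality survives at every index $i \le r$, and beyond this bookkeeping I expect no substantial obstacle.
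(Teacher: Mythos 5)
Your proof is correct and follows essentially the same route as the paper's: an induction on $i$ in which the hypothesis on $|N_1|_p$ is used at each step to guarantee $|N_i|_p > |k_i|_p$, forcing the minimum in the valuation of the Transfer Equation to be attained at $|l_i|_p$ so that the recursion linearizes to $|N_{i+1}|_p = |N_i|_p - |k_i|_p + |l_i|_p$ and telescopes. The only cosmetic difference is that you phrase the identity via $\max(a-b,0)$ rather than $a - \min(a,b)$, which changes nothing.
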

	
	\begin{proof}
		
		By induction on $i \in \llbracket 2,r+1 \rrbracket$, let us show property $P_i$: ``$|N_i|_p = |N_1|_p - \sum_{j=1}^{i-1}|k_j|_p + \sum_{j=1}^{i-1}|l_j|_p$.'' 
		\paragraph{Base case} Given the fact that $|N_1|_p > |k_1|_p$, one has \begin{align*}|N_1|_p - |k_1|_p &= |N_2|_p - \min\left(|N_2|_p,|l_1|_p\right) \\
			&> 0
		\end{align*}
		hence necessarily $\min\left(|N_2|_p,|l_1|_p\right) = |l_1|_p$
		so \[|N_2|_p = |N_1|_p - |k_1|_p + |l_1|_p.\]
		Thus $P_2$ is true. 
		\paragraph{Induction step} Let us assume that $P_i$ is true, one has \begin{align*}
			|N_i|_p &= |N_1|_p - \sum_{j=1}^{i-1}|k_j|_p + \sum_{j=1}^{i-1}|l_j|_p \\
			&> |k_i|_p \text{ \ by the assumption made on $p$.}
		\end{align*} 
		We deduce that: \begin{align*}
			|N_{i+1}|_p - \min\left(|l_i|_p,|N_{i+1}|_p\right) &= |N_i|_p - \min\left(|N_i|_p,|k_i|_p\right) \\
			&= |N_i|_p - |k_i|_p \\
			&> 0
		\end{align*} hence necessarily $ \min\left(|l_i|_p,|N_{i+1}|_p\right) = |l_i|_p$ and \[|N_{i+1}|_p = |N_i|_p - |k_i|_p + |l_i|_p.\]
		The induction is proved.  
		
	\end{proof}
	
	\begin{proposition}\label{stab1}
		Let $\mathcal{G}$ be a connected $\mathcal{H}$-graph. Let $V$ and $W$ be two vertices of $\mathcal{G}$ with labels $(s, N)$ and $(s, M)$ respectively. One has $\Ph_{\mathcal{H},s}(N) = \Ph_{\mathcal{H}, s}(M)$. 
	\end{proposition}
	
	\begin{proof}
		Let $E_1,...,E_r \in \mathcal{E}\left(\mathcal{G}\right)$ be an edge path labeled $e_1,...,e_r \in \mathcal{E}\left(\mathcal{H}\right)$ such that $\src(E_1) = V$ and $\trg(E_r) = W$. Let us denote $V_i = \src(E_i)$ for every $i$ and $V_{r+1} = W$.   \\
		First notice that the edges $e_1,...,e_r$ form a cycle based at $s$ in $\mathcal{H}$. For every $i \in \llbracket 1,r \rrbracket$, we denote $s_i = \src(e_i)$ and $(k_i, l_i) = (k_{e_i, \src}, k_{e_i, \trg})$. \\
		For every $i \in \llbracket 1,r \rrbracket$, let $(s_i,N_i)$ be the label of the vertex $v_i$ in $\mathcal{H}$. One has: \[\frac{N_i}{N_i \wedge k_i} = \frac{N_{i+1}}{N_{i+1} \wedge l_i}.\]
		Let $p$ be a prime number such that $|N|_p > \max_{i \in \llbracket 1,r \rrbracket}\left(\sum_{j=1}^{i}|k_j|_p - \sum_{j=1}^{i-1}|l_j|_p\right)$. By Proposition \ref{pgcd}, one has \[|M|_p = |N|_p + \sum_{i=1}^r|k_i|_p - \sum_{i=1}^r|l_i|_p.\]
		Thus, for every $p \in \mathcal{P}_{\mathcal{H},s}(N)$: \[|M|_p = |N|_p.\] 
		We deduce that \[\Ph_{\mathcal{H},s}(N) \text{ \ divides } \Ph_{\mathcal{H},s}(M)\]
		and by symmetry (if the edge path $E_1,...,E_r$ connects $V$ to $W$, then the edge path $\overline{E_r},...,\overline{E_1}$ connects $W$ to $V$ in $\mathcal{G}$), we finally get \[\Ph_{\mathcal{H},s}(N) = \Ph_{\mathcal{H},s}(M).\]
	\end{proof}
	
	If $\alpha$ is a transitive $\mathcal{H}$-preaction, we proved that, for every $s \in \mathcal{V}\left(\mathcal{H}\right)$ and every vertices $V$, $W$ of $\mathcal{G}$ with labels $(s,N)$, $(s,M)$, respectively, one has \[\Ph_{\mathcal{H},s}(N) = \Ph_{\mathcal{H},s}(M).\]
	\begin{definition}
	One defines the \textbf{$\mathcal{H}$-phenotype of $\alpha$ relatively to the vertex $s$} as this quantity, and we denote it by $\bm{\Ph}_{\mathcal{H}, s}(\alpha)$. 
	\end{definition}
	In particular, given a subgroup $\Lambda$ of $\Gamma$, applying this observation to the (saturated) $\mathcal{H}$-preaction associated to $\Lambda$, we get, for every $\gamma \in \Gamma$: \[\Ph_{\mathcal{H},s}\left(\left[ \left\langle \alpha_s \right\rangle: \Lambda \cap \left\langle \alpha_s \right\rangle \right]\right) = \Ph_{\mathcal{H}, s}\left(\left[ \gamma \left\langle \alpha_s \right\rangle \gamma^{-1}: \Lambda \cap \gamma \left\langle \alpha_s \right\rangle \gamma^{-1}\right]\right).\]
	
	\begin{definition}
		We denote by \[\bm{\Ph}_{\mathcal{H},s}(\Lambda) := \Ph_{\mathcal{H},s}\left(\left[\left\langle a_s \right\rangle : \Lambda \cap \left\langle a_s \right\rangle \right]\right)\] this quantity and we call it \textbf{$\mathcal{H}$-phenotype of the subgroup $\Lambda$ of $\Gamma$ relatively to $s$}. The partition \[\Sub(\Gamma) = \bigsqcup_{N \in \mathcal{Q}_{\mathcal{H}, s} \cup \left\{ \infty \right\}} \bm{\Ph}_{\mathcal{H},s}^{-1}(N) \] is called the \textbf{phenotypical decomposition of $\Gamma$ relatively to the vertex $s$}. The action by conjugation preserves $\bm{\Ph}_{\mathcal{H},s}^{-1}(N)$ for every $N \in \mathcal{Q}_{\mathcal{H}, s} \cup \{\infty\}$.
	\end{definition}
	
	Though the $\mathcal{H}$-phenotype a priori depends on the choice of a given vertex, the property of sharing the same $\mathcal{H}$-phenotype does not depend on the choice of the vertex: 
	
	\begin{proposition}\label{decphen}
		Let $\mathcal{G}_1$ and $\mathcal{G}_2$ be connected $\mathcal{H}$-graphs and $V$, $W$ be vertices of $\mathcal{G}_1$, $\mathcal{G}_2$ with labels $(s,M)$, $(s,N)$ respectively such that \[\Ph_{\mathcal{H}, s}(M) = \Ph_{\mathcal{H},s}(N).\]
		For every vertices $V'$, $W'$ of $\mathcal{G}_1$, $\mathcal{G}_2$ respectively with labels $(s', M')$, $(s', N')$ one has \[\Ph_{\mathcal{H}, s'}\left(M'\right) = \Ph_{\mathcal{H}, s'}\left(N'\right).\]
	\end{proposition}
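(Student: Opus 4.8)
The plan is to prove a stronger, graph-independent statement: inside \emph{any} connected $\mathcal{H}$-graph, the $\mathcal{H}$-phenotype at a vertex of type $s'$ is completely determined by the $\mathcal{H}$-phenotype at a vertex of type $s$ (together with $\mathcal{H}$, $s$, $s'$, and nothing else). Granting this, the proposition is immediate: applying it inside $\mathcal{G}_1$ expresses $\Ph_{\mathcal{H},s'}(M')$ as a fixed function of $\Ph_{\mathcal{H},s}(M)$, applying it inside $\mathcal{G}_2$ expresses $\Ph_{\mathcal{H},s'}(N')$ as the same fixed function of $\Ph_{\mathcal{H},s}(N)$, and the hypothesis $\Ph_{\mathcal{H},s}(M)=\Ph_{\mathcal{H},s}(N)$ then forces the outputs to agree. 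The case where $M$ (equivalently $N$) is infinite I would dispose of first: by the Transfer Equation an infinite label propagates across every edge of a connected $\mathcal{H}$-graph, so all labels of $\mathcal{G}_1$ and of $\mathcal{G}_2$ are infinite and all four phenotypes equal $\infty$. Thus I may assume $M,N,M',N'$ are all finite.

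The heart of the matter is a reformulation of $\mathcal{P}_{\mathcal{H},s}(N)$. Since the cycle condition of Definition \ref{defphenotype} involves neither $N$ nor $s$, every $p\in\mathcal{P}_{\mathcal{H},s}(N)$ is \emph{balanced}, i.e.\ $\sum_i|k_i|_p=\sum_i|l_i|_p$ for every (simple, by Remark \ref{simplification}) cycle of $\mathcal{H}$. For a balanced $p$ the net change $\sum_i|l_i|_p-\sum_i|k_i|_p$ along a path depends only on its endpoints, so there is a height function $h_p:\mathcal{V}(\mathcal{H})\to\mathbb{Z}$, unique up to an additive constant, with $h_p(\trg(e))-h_p(\src(e))=|k_{e,\trg}|_p-|k_{e,\src}|_p$ for every edge $e$. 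A telescoping computation then shows that for a path based at $s$ labeled $(k_1,l_1),\dots,(k_r,l_r)$ with last edge $e_r$, the deficit appearing in Definition \ref{defphenotype},
\[ \sum_{i=1}^{r}|k_i|_p-\sum_{i=1}^{r-1}|l_i|_p \;=\; h_p(s)+|k_{e_r,\src}|_p-h_p(\src(e_r)), \]
depends only on the last edge. As $\mathcal{H}$ is connected, every edge occurs as the last edge of some path based at $s$, so the first condition of Definition \ref{defphenotype} reduces to $|N|_p-h_p(s)>c_p$, where
\[ c_p:=\max_{e\in\mathcal{E}(\mathcal{H})}\bigl(|k_{e,\src}|_p-h_p(\src(e))\bigr) \]
depends only on $\mathcal{H}$ and $p$, \emph{not} on $s$. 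Hence, for balanced $p$, one has $p\in\mathcal{P}_{\mathcal{H},s}(N)$ if and only if $|N|_p-h_p(s)>c_p$; and taking a single edge at $s$ shows $|N|_p\geq 1$ for such $p$, so $\mathcal{P}_{\mathcal{H},s}(N)$ is exactly the set of prime divisors of $\Ph_{\mathcal{H},s}(N)$.

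Next I would transport this along a connected $\mathcal{H}$-graph, mimicking the proof of Proposition \ref{stab}. Given vertices of types $s,s'$ with finite labels $Q,Q'$, pick an edge path between them, project it to $\mathcal{H}$; the Transfer Equations supply the hypotheses of Proposition \ref{pgcd}, whose conclusion combined with the characterization above gives, for every balanced $p$, the equivalence $p\in\mathcal{P}_{\mathcal{H},s}(Q)\iff p\in\mathcal{P}_{\mathcal{H},s'}(Q')$, and in that case $|Q'|_p-h_p(s')=|Q|_p-h_p(s)$ (the normalized valuation is preserved). Writing $\mathcal{P}_0:=\mathcal{P}_{\mathcal{H},s}(M)=\mathcal{P}_{\mathcal{H},s}(N)$, which coincide with the prime divisors of the common value $\Ph_{\mathcal{H},s}(M)=\Ph_{\mathcal{H},s}(N)$ and on which $|M|_p=|N|_p$, and applying the transport step in $\mathcal{G}_1$ and in $\mathcal{G}_2$, I obtain $\mathcal{P}_{\mathcal{H},s'}(M')=\mathcal{P}_0=\mathcal{P}_{\mathcal{H},s'}(N')$ and, for each $p\in\mathcal{P}_0$, $|M'|_p=|M|_p+h_p(s')-h_p(s)=|N|_p+h_p(s')-h_p(s)=|N'|_p$. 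Therefore $\Ph_{\mathcal{H},s'}(M')=\prod_{p\in\mathcal{P}_0}p^{|M'|_p}=\prod_{p\in\mathcal{P}_0}p^{|N'|_p}=\Ph_{\mathcal{H},s'}(N')$, as required.

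The main obstacle is the second paragraph, and specifically the fact that the threshold $c_p$ governing membership in $\mathcal{P}_{\mathcal{H},s}(N)$ is independent of the vertex $s$; this is precisely what lets the set of phenotype primes be carried unchanged from one vertex type to another. It rests on two points that must be verified carefully for balanced primes, namely the path-independence of the height $h_p$ and the reduction of the path-deficit to its last edge. Once these are secured, the remainder is a routine invocation of Proposition \ref{pgcd} and the bookkeeping above.
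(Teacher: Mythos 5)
Your proof is correct, and its skeleton is the same as the paper's: both arguments rest on the observation that $\mathcal{P}_{\mathcal{H},s}(n)$ is exactly the set of prime divisors of $\Ph_{\mathcal{H},s}(n)$, and on transporting $p$-adic valuations along edge paths of $\mathcal{G}_1$ and $\mathcal{G}_2$ via Proposition \ref{pgcd}. Where you differ is in the packaging: the paper fixes $p\in\mathcal{P}_{\mathcal{H},s'}(M')$ and verifies by hand that concatenating the reversed connecting path with an arbitrary path based at $s$ shifts the deficit by a constant, then checks that the two shifts (in $\mathcal{G}_1$ and $\mathcal{G}_2$) cancel using the cycle condition; you instead isolate the reason this works into the height function $h_p$ and the vertex-independent threshold $c_p$, so that membership in $\mathcal{P}_{\mathcal{H},s}(N)$ becomes the single inequality $|N|_p-h_p(s)>c_p$ and the transported quantity $|Q|_p-h_p(\cdot)$ is manifestly an invariant of the connected $\mathcal{H}$-graph. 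Your telescoping identity for the path deficit (reduction to the last edge) is exactly the computation hidden inside the paper's concatenation step, so nothing new is being assumed; what your version buys is a cleaner, genuinely stronger intermediate statement --- the phenotype at $s'$ is a fixed function of the phenotype at $s$, independent of the ambient graph --- from which the proposition follows formally, at the modest cost of having to check that $h_p$ is well defined for balanced primes and that every edge of $\mathcal{H}$ arises as the last edge of a path based at $s$ (both of which you address, the latter using connectedness of $\mathcal{H}$). The handling of the infinite case via propagation of $\infty$ through the Transfer Equation is also sound and is implicit in the paper.
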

	
	\begin{proof}
		First, notice that, for every integer $n$, \[\mathcal{P}_{\mathcal{H},s}(n) = \left\{p \in \mathcal{P} \mid |\Ph_{\mathcal{H},s}(n)|_p > 0\right\}.\]
		In particular, the equality $\Ph_{\mathcal{H}, s}(M) = \Ph_{\mathcal{H},s}(N)$ implies that \[\mathcal{P}_{\mathcal{H}, s}(M) = \mathcal{P}_{\mathcal{H}, s}(N). \]
		Let $E_1,...,E_r \in \mathcal{E}\left(\mathcal{G}_1\right)$ (\textit{resp.} $F_1,..., F_u \in \mathcal{E}\left(\mathcal{G}_2\right)$) be an edge path labeled $e_1,...,e_r \in \mathcal{E}\left(\mathcal{H}\right)$ (\textit{resp.} $f_1,...,f_u \in \mathcal{E}\left(\mathcal{H}\right)$) which connects $V$ to $V'$ (\textit{resp.} $W'$ to $W$). We denote $\src(E_i) = V_i$ for every $i \in \llbracket 1, r \rrbracket$ (\textit{resp.} $\src(F_i) = W_i$ for every $i \in \llbracket 1, u \rrbracket$) and $V_{r+1} = V'$ (\textit{resp.} $W_{u+1} = W$). One denotes $(s_i,M_i)$ (\textit{resp.} $(t_i,N_i)$) the label of $V_i$ (\textit{resp.} $W_i$) for every $i \in \llbracket 1,r+1 \rrbracket$ (\textit{resp.} $i \in \llbracket 1,u+1 \rrbracket$). In particular, $s_1 = t_{u+1} = s$ and $s_{r+1} = t_1 = s'$. We also denote $(k_i,l_i) = (k_{e_i, \src}, k_{e_i, \trg})$ and $(m_i, n_i) = (k_{f_i, \src}, k_{f_i, \trg})$. Hence we have \[\frac{M_i}{M_i \wedge k_i} = \frac{M_{i+1}}{M_{i+1} \wedge l_i} \text{ \ for every $i \in \llbracket 1,r+1 \rrbracket$ }\]
		and
		\[\frac{N_i}{N_i \wedge m_i} = \frac{N_{i+1}}{N_{i+1} \wedge n_i} \text{ \ for every $i \in \llbracket 1,u+1 \rrbracket$ }.\]
		Let us fix a prime number $p \in \mathcal{P}_{\mathcal{H},s'}(M')$. In particular one has $\sum_{i=1}^r|k_i|_p = \sum_{i=1}^r|l_i|_p$ for every cycle labeled $(k_1, l_1), ..., (k_r, l_r)$ (this condition is independent of $s)$. Let us prove that $p \in \mathcal{P}_{\mathcal{H}, s}(M)$: let $g_1, ..., g_t$ be an edge path of $\mathcal{H}$ which is based at $s$ and labeled $(a_1, b_1), ..., (a_t, b_t)$. Notice that the path $\overline{e_r},...,\overline{e_1},g_1,...,g_t$ is based at $s'$ and labeled $(l_r,k_r),...,(l_1,k_1),(a_1,b_1),...,(a_t,b_t)$. Hence, as $p \in \mathcal{P}_{\mathcal{H},s'}(M')$, one has \[|M'|_p > \sum_{i=1}^r |l_i|_p - \sum_{i=1}^r|k_i|_p + \sum_{i=1}^t|a_i|_p - \sum_{i=1}^{t-1}|b_i|_p. \]
		By Proposition \ref{pgcd} applied to the path $\overline{E_r},...\overline{E_1}$, one has 
		\[|M|_p = |M'|_p + \sum_{i=1}^r |k_i|_p - \sum_{i=1}^r|l_i|_p. \]
		Hence \[|M|_p > \sum_{i=1}^t|a_i|_p - \sum_{i=1}^{t-1}|b_i|_p\]
		which proves that \[p \in \mathcal{P}_{\mathcal{H},s}(M).\]
		Thus, $p \in \mathcal{P}_{\mathcal{H},s}(N)$ and by Proposition \ref{pgcd}, 
		\[|N'|_p = |N|_p + \sum_{i=1}^u |m_i|_p - \sum_{i=1}^u|n_i|_p.\]
		As $\Ph_{\mathcal{H}, s}(M) = \Ph_{\mathcal{H},s}(N)$ and $p \in \mathcal{P}_{\mathcal{H},s}(M)$, we have $|N|_p = |M|_p$ so that \[|N'|_p = |M'|_p + \left(\sum_{i=1}^r|k_i|_p - \sum_{i=1}^r|l_i|_p \right) + \left(\sum_{i=1}^u|m_i|_p - \sum_{i=1}^u|n_i|_p \right). \]
		Moreover, the edge path $e_1,...,e_r,f_1,...,f_u$ is labeled $(k_1,l_1),...,(k_r,l_r),(m_1,n_1),...,(m_u,n_u)$ and is a cycle. We deduce that \[|N'|_p = |M'|_p \text{ \ for every $p \in \mathcal{P}_{\mathcal{H}, s'}\left(M'\right)$ }\]
		and by symmetry \[|N'|_p = |M'|_p \text{ \ for every $p \in \mathcal{P}_{\mathcal{H}, s'}\left(N'\right)$ }\]
		so finally \[\Ph_{\mathcal{H},s'}\left(N'\right) = \Ph_{\mathcal{H},s'}\left(M'\right).\]
		
	\end{proof}

	As a consequence of this result, we get the following corollary: 
	
	\begin{corollary} The phenotypical decomposition does not depend on the choice of the vertex.
	\end{corollary}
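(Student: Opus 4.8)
The plan is to reduce the statement to a direct application of Proposition~\ref{decphen}, using that a partition of $\Sub(\Gamma)$ is completely determined by the equivalence relation it induces. Thus it suffices to prove that the relation ``having the same $\mathcal{H}$-phenotype'' is independent of the chosen vertex: for any two vertices $s, s' \in \mathcal{V}(\mathcal{H})$ and any two subgroups $\Lambda_1, \Lambda_2$ of $\Gamma$, I would establish
\[ \bm{\Ph}_{\mathcal{H},s}(\Lambda_1) = \bm{\Ph}_{\mathcal{H},s}(\Lambda_2) \Longleftrightarrow \bm{\Ph}_{\mathcal{H},s'}(\Lambda_1) = \bm{\Ph}_{\mathcal{H},s'}(\Lambda_2). \]
Once this biconditional holds, the pieces $\bm{\Ph}_{\mathcal{H},s}^{-1}(N)$ and the pieces $\bm{\Ph}_{\mathcal{H},s'}^{-1}(N')$ are the classes of one and the same relation, so the two decompositions coincide as partitions (only the labels attached to the pieces change).

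First I would translate everything into $\mathcal{H}$-graphs. Let $\mathcal{G}_i$ be the $\mathcal{H}$-graph of $\Lambda_i$; since it arises from a transitive action it is connected, and since that action is saturated every $\dom(\alpha_s)$ is the whole underlying set, so $\mathcal{G}_i$ contains at least one vertex of type $s$ and one of type $s'$. I pick such vertices $V_i$ and $V_i'$, with labels $(s, M_i)$ and $(s', M_i')$. By Proposition~\ref{stab} the quantities $\Ph_{\mathcal{H},s}(M_i)$ and $\Ph_{\mathcal{H},s'}(M_i')$ do not depend on which vertex of the given type is chosen, and by definition they equal $\bm{\Ph}_{\mathcal{H},s}(\Lambda_i)$ and $\bm{\Ph}_{\mathcal{H},s'}(\Lambda_i)$ respectively.

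The heart of the argument is then immediate: assuming $\Ph_{\mathcal{H},s}(M_1) = \Ph_{\mathcal{H},s}(M_2)$, I apply Proposition~\ref{decphen} to the connected graphs $\mathcal{G}_1, \mathcal{G}_2$ with $V_1, V_2$ in the roles of $V, W$ and $V_1', V_2'$ in the roles of $V', W'$, obtaining $\Ph_{\mathcal{H},s'}(M_1') = \Ph_{\mathcal{H},s'}(M_2')$; the converse implication follows by exchanging $s$ and $s'$. The only bookkeeping I would single out is the infinite-phenotype case: a label $\infty$ at one vertex forces, through the Transfer Equation of Definition~\ref{ggraph} along a connecting edge path, all labels of a connected $\mathcal{H}$-graph to be $\infty$, and $\Ph_{\mathcal{H},v}(N)=\infty$ exactly when $N=\infty$; hence $\bm{\Ph}_{\mathcal{H},s}(\Lambda)=\infty \iff \bm{\Ph}_{\mathcal{H},s'}(\Lambda)=\infty$, which is the piece indexed by $\infty$. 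Since the real content is packaged in Proposition~\ref{decphen}, the expected difficulty here is essentially nil; the main (minor) obstacle is simply to make sure that both $\mathcal{H}$-graphs genuinely carry vertices of both types so that the proposition is applicable.
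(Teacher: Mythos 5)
Your proposal is correct and follows the paper's intended route: the paper states this corollary as an immediate consequence of Proposition~\ref{decphen}, and your argument — identifying the decomposition with the equivalence relation ``same $\mathcal{H}$-phenotype'' and invoking Proposition~\ref{decphen} to show that relation is vertex-independent — is precisely that deduction, with the existence of vertices of each type (via saturation) and the $\infty$-phenotype case correctly checked along the way.
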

	
	\begin{definition}
	    We say that two $\mathcal{H}$-preactions $\alpha, \beta$ share the same phenotype if for some (equivalently, for every) vertex $s \in \mathcal{V}(\mathcal{H})$, one has $\bm{\Ph}_{\mathcal{H}, s}(\alpha) = \bm{\Ph}_{\mathcal{H}, s}(\beta)$.
	\end{definition}
	
	\subsection{\texorpdfstring{Merging preactions of a given $\mathcal{H}$-phenotype}{Merging preactions of a given H-phenotype}}

	To obtain topological transitivity, the goal of this subsection is to extend two preactions sharing the same phenotype into a single transitive preaction (in fact, we prove a stronger statement to obtain \textit{high} topological transitivity (\textit{cf.} Lemma \ref{liaison})). \\
	We first need the following lemma that establishes a link between the $\mathcal{H}$-phenotype and the $\mathcal{H}'$-phenotype for some given subgraph $\mathcal{H}'$ of $\mathcal{H}$: 
	
	\begin{lemma}\label{subhgraph}
		Let us assume that $\mathcal{H}$ consists of a connected subgraph $\mathcal{H}'$ and an edge $e$ with source in $\mathcal{H}'$. Then for any $N \in \mathbb{N}$: \begin{enumerate}
			\item \label{inclusion1} $\Ph_{\mathcal{H}, \src(e)} \circ \Ph_{\mathcal{H}', \src(e)}(N) = \Ph_{\mathcal{H}, \src(e)}(N)$;
			\item \label{feuille} if $\trg(e) \notin \mathcal{V}\left(\mathcal{H}'\right)$, denoting by $(l,k)$ the label of $e$: \[\Ph_{\mathcal{H}', \src(e)}\left(\frac{N|l|}{N \wedge k}\right) = \Ph_{\mathcal{H}', \src(e)}\left(\frac{\Ph_{\mathcal{H}, \trg(e)}(N)|l|}{\Ph_{\mathcal{H}, \trg(e)}(N) \wedge k}\right); \]
			\item \label{pasfeuille} otherwise, denoting by $C$ a cycle of edges $e, e_1, ..., e_r$ (with $e_i \in \mathcal{E}\left(\mathcal{H}'\right)$ for every $i \in \llbracket 1, r \rrbracket$), if $|N|_p = 0$ for every $p \notin \mathcal{P}_{C, \src(e)}(N)$, then: \[\Ph_{\mathcal{H}, \src(e)}(N) = \Ph_{\mathcal{H}', \src(e)}(N).\]
		\end{enumerate}
	\end{lemma}
	
	\begin{proof}
		As $\mathcal{H}'$ is a subgraph of $\mathcal{H}$ containing $\src(e)$, one has $\mathcal{P}_{\mathcal{H}, \src(e)}(M) \subseteq \mathcal{P}_{\mathcal{H}', \src(e)}(M)$ for every integer $M$.
		Let us show that $\mathcal{P}_{\mathcal{H}, \src(e)}(\Ph_{\mathcal{H}', \src(e)}(N)) = \mathcal{P}_{\mathcal{H}, \src(e)}(N)$. For every prime number $p \in \mathcal{P}_{\mathcal{H}, \src(e)}(N)$, one has $|\Ph_{\mathcal{H}', \src(e)}(N)|_p = |N|_p$. Hence $p \in \mathcal{P}_{\mathcal{H}, \src(e)}\left(\Ph_{\mathcal{H}', \src(e)}(N)\right)$. 
		Conversely, if $p \in \mathcal{P}_{\mathcal{H}, \src(e)}\left(\Ph_{\mathcal{H}', \src(e)}(N)\right)$, then one has $p \in \mathcal{P}_{\mathcal{H}', \src(e)}\left(\Ph_{\mathcal{H}', \src(e)}(N)\right)$ by the previous observation which implies that $|\Ph_{\mathcal{H}', \src(e)}(N)|_p = |N|_p$.
		Thus $p \in \mathcal{P}_{\mathcal{H}, \src(e)}(N)$. We deduce the following equality:
		\begin{align*}
			\Ph_{\mathcal{H}, \src(e)}\left(\Ph_{\mathcal{H}', \src(e)}(N)\right) &= \prod_{p \in \mathcal{P}_{\mathcal{H}, \src(e)}\left(\Ph_{\mathcal{H}', \src(e)}(N)\right)}p^{|\Ph_{\mathcal{H}', \src(e)}(N)|_p} \\
			&= \prod_{p \in \mathcal{P}_{\mathcal{H}, \src(e)}(N)}p^{|\Ph_{\mathcal{H}', \src(e)}(N)|_p} \\
			&= \prod_{p \in \mathcal{P}_{\mathcal{H}, \src(e)}(N)}p^{|N|_p} \\
			&= \Ph_{\mathcal{H}, \src(e)}(N).
		\end{align*} 
		We deduce Item~\ref{inclusion1}. Notice that we only used the fact that $\mathcal{H}'$ is a subgraph of $\mathcal{H}$ containing $\src(e)$. \\
		\\
		Let us assume that $\trg(e) \notin \mathcal{V}\left(\mathcal{H}'\right)$. Let us denote $P = \Ph_{\mathcal{H}, \trg(e)}(N)$. If an edge path $c$ of $\mathcal{H}'$ labeled $(m_1,n_1),...,(m_r,n_r)$ is based at $\src(e)$ then the path $\overline{e}*c$, labeled $(k,l), (m_1,n_1),...,(m_r,n_r)$, is based at $\trg(e)$. As moreover the reduced cycles of $\mathcal{H}$ are in $\mathcal{H}'$ as soon as $\trg(e) \notin \mathcal{V}\left(\mathcal{H}'\right)$ we deduce the equality: \[\mathcal{P}_{\mathcal{H}',\src(e)}\left(\frac{N|l|}{N \wedge k}\right) = \mathcal{P}_{\mathcal{H},\trg(e)}\left(N\right) \bigsqcup \left\{p \in \mathcal{P}_{\mathcal{H}',\src(e)}(l) \mid |N|_p \leq |k|_p \right\}. \]
		Likewise
		\[\mathcal{P}_{\mathcal{H}',\src(e)}\left(\frac{P|l|}{P \wedge k}\right) = \mathcal{P}_{\mathcal{H},\trg(e)}\left(P\right) \bigsqcup \left\{p \in \mathcal{P}_{\mathcal{H}',\src(e)}(l) \mid |P|_p \leq |k|_p \right\}.\]
		On the one hand, one has $\mathcal{P}_{\mathcal{H}, \trg(e)}\left(N\right) = \mathcal{P}_{\mathcal{H}, \trg(e)}\left(P\right)$. \\
		On the other hand, if $|P|_p \leq |k|_p$ and $p \in \mathcal{P}_{\mathcal{H}',\src(e)}(l)$, then $p \notin \mathcal{P}_{\mathcal{H},\trg(e)}\left(N\right)$ so there exists an edge path of $\mathcal{H}'$ labeled $(m_1,n_1),...,(m_r,n_r)$ based at $\src(e)$ such that \[\left|N\right|_p \leq |k|_p - |l|_p + \sum_{i=1}^r|m_i|_p-\sum_{i=1}^{r-1}|n_i|_p.\]
		As we also have $p \in \mathcal{P}_{\mathcal{H}',\src(e)}(l)$, we deduce \[\left|N\right|_p \leq |k|_p.\]
		Conversely, if $\left|N\right|_p \leq |k|_p$, then $p \notin \mathcal{P}_{\mathcal{H},\trg(e)}\left(N\right)$ so $\left|P\right|_p = 0$. We deduce \begin{align*}
			\mathcal{P}_{\mathcal{H}',\src(e)}\left(\frac{N|l|}{N \wedge k}\right) &= \mathcal{P}_{\mathcal{H}, \trg(e)}\left(P\right) \bigsqcup \left\{p \in \mathcal{P}_{\mathcal{H}',\src(e)}(l) \mid \left|N\right|_p \leq |k|_p \right\}\\
			&= \mathcal{P}_{\mathcal{H}',\src(e)}\left(\frac{P|l|}{P \wedge k}\right).
		\end{align*}    
		Therefore: 
		\begin{align*}
			\Ph_{\mathcal{H}',\src(e)}\left(\frac{N|l|}{N \wedge k}\right) &= \prod_{p \in \mathcal{P}_{\mathcal{H},\trg(e)}\left(N\right)} p^{\left|N\right|_p + |l|_p - \min\left(\left|N\right|_p,|k|_p\right)} \\ & \text{\ \ \ \ \ \ \ \ } \times \prod_{p \in \mathcal{P}_{\mathcal{H}',\src(e)}(l), \left|N\right|_p \leq |k|_p} p^{\left|N\right|_p + |l|_p - \min \left(\left|N\right|_p,|k|_p \right)} \\
			&=  \prod_{p \in \mathcal{P}_{\mathcal{H},\trg(e)}\left(P\right)} p^{|N|_p + |l|_p - |k|_p}  \prod_{p \in \mathcal{P}_{\mathcal{H}',\src(e)}(l), |N|_p \leq |k|_p} p^{|l|_p} \\
			&= \Ph_{\mathcal{H}',\src(e)}\left(\frac{P|l|}{P \wedge k}\right).
		\end{align*}
		This proves Item~\ref{feuille}. \\
		\\
		Let us assume that $\trg(e) \in \mathcal{V}\left(\mathcal{H}\right)$ and that there exists a cycle of edges $C = e, e_1, ..., e_r$ labeled $(l,k), (l_1, k_1), ..., (l_r, k_r)$ (with $e_i \in \mathcal{E}\left(\mathcal{H}'\right)$ for every $i \in \llbracket 1, r \rrbracket$) satisfying: $|N|_p = 0$ for every $p \notin \mathcal{P}_{C, \src(e)}(N)$. Let us show that $\mathcal{P}_{\mathcal{H}, \src(e)}(N) = \mathcal{P}_{\mathcal{H}', \src(e)}(N)$. As $\mathcal{H}'$ is a subgraph of $\mathcal{H}$, one has $\mathcal{P}_{\mathcal{H}, \src(e)}(N) \subseteq \mathcal{P}_{\mathcal{H}', \src(e)}(N)$. Conversely, let $p \in \mathcal{P}_{\mathcal{H}', \src(e)}(N)$. In particular, one has $|N|_p \neq 0$ hence $p \in \mathcal{P}_{C,\src(e)}(N)$ which implies: \begin{equation}\label{cyclenul}|k|_p + \sum_{i=1}^r|k_i|_p = |l|_p + \sum_{i=1}^r|l_i|_p.\end{equation}
		Using Remark \ref{simplification}, let $C' = f_1, ..., f_s$ be a simple cycle of $\mathcal{H}$ labeled $(n_1, m_1), ..., (n_s, m_s)$. If $e$ does not belong to $C'$, then $C'$ lies in $\mathcal{H}'$ so $\sum_{i=1}^s|n_i|_p = \sum_{i=1}^s|m_i|_p$ as $p \in \mathcal{P}_{\mathcal{H}', \src(e)}(N)$. Otherwise, up to changing the base point of $C'$, one can assume that $f_s = e$, hence $(n_s, m_s) = (l,k)$. Hence, the edge path $e_1, ..., e_r, \overline{f_{s-1}}, ..., \overline{f_1}$ is a cycle in $\mathcal{H}'$ which is labeled $(l_1, k_1), ..., (l_r, k_r), (m_{s-1}, n_{s-1}), ..., (m_1, n_1)$ so as $p \in \mathcal{P}_{\mathcal{H}', \src(e)}(N)$ we get:
		\[\sum_{i=1}^r|l_i|_p - |k_i|_p = \sum_{i=1}^{s-1}|n_i|_p - |m_i|_p.\]
		Combining this with Equation \eqref{cyclenul} leads to the following fact: 
		\[\sum_{i=1}^s|m_i|_p = \sum_{i=1}^s|n_i|_p \text{ \ for any cycle of $\mathcal{H}$ labeled $(m_1, n_1),..., (m_s, n_s)$}.\]
		Now (using again Remark \ref{simplification}) let $C'' = g_1, ..., g_t$ be a simple path of $\mathcal{H}$ based at $\src(e)$ and labeled $(i_1, j_1)$, ..., $(i_t, j_t)$ which is either a loop or not a cycle. If $C''$ lies in $\mathcal{H}'$, then, as $p \in \mathcal{P}_{\mathcal{H}', \src(e)}(N)$, one has: \[|N|_p > \left(\sum_{k=1}^{t-1}|i_k|_p - |j_k|_p\right) + |i_t|_p.\] Otherwise, if moreover $t \geq 2$, one necessarily has $g_1 = e$ (hence $(i_1, j_1) = (l,k)$). The edge path $\overline{e_r}, ..., \overline{e_1}, g_2, ..., g_t$ lies in $\mathcal{H}'$, is based at $\src(e)$ and labeled $(k_r, l_r)$, ..., $(k_1, l_1)$, $(i_2, j_2)$, ..., $(i_t, j_t)$. Thus, as $p \in \mathcal{P}_{\mathcal{H}', \src(e)}(N)$:
		\[|N|_p > \left(\sum_{i=1}^{r}|k_i|_p - |l_i|_p\right) + \left(\sum_{k=2}^{t-1}|i_k|_p - |j_k|_p\right) + |i_t|_p\]
		Combining this inequality with Equation \eqref{cyclenul} leads to:
		\[|N|_p > \left(\sum_{k=1}^{t-1}|i_k|_p - |j_k|_p\right) + |i_t|_p.\]
		If $t=1$ (\textit{i.e.} $e$ is not a loop and $C'' = e$ labeled $(l,k)$ or $e$ is a loop and $C'' \in \{e, \overline{e}\}$ labeled $(l,k)$ or $(k,l)$), as $p \in \mathcal{P}_{C, \src(e)}(N)$ we already know that $|N|_p > |l|_p$ and that $|k|_p = |l|_p$ if $e$ is a loop. 
		Thus, we proved that $p \in \mathcal{P}_{\mathcal{H}, \src(e)}(N)$ which immediately implies Item~\ref{pasfeuille}.
		
	\end{proof}
		
	To alleviate notations, given an $\mathcal{H}$-preaction $\alpha$ defined by a set of partial bijections $\{\alpha_s, \tau_e \vert (s, e) \in \mathcal{V}(\mathcal{H})\times \mathcal{E}\left(\mathcal{H}\right) \setminus \mathcal{E}(T) \}$ on a countable set $X$, we will simply denote by $a_s$ the partially defined bijection $\alpha_s$ and by $t_e$ the partially defined bijection $\tau_e$.

	\begin{lemma}\label{liaison}
		Let us assume that $\mathcal{H}$ is reduced and different from a loop with at least one label equal to $\pm 1$.
		Let $\alpha_1,...,\alpha_{\Sigma}, \beta_1, ..., \beta_{\Sigma}$ be collections of transitive $\mathcal{H}$-preactions defined on countable pointed sets $(X_1, x_{1,0})$, ..., $(X_{\Sigma},x_{\Sigma,0})$, $(Y_1,y_{1,0})$,..., $(Y_{\Sigma},y_{\Sigma,0})$ such that there exist an edge $e_0 \in \mathcal{E}(\mathcal{H})$ and words $\mathfrak{m}, \mathfrak{m}'$ in the vertex and edge generators of $\Gamma$ (defined by $\mathcal{H}$)  satisfying: \begin{itemize}
			\item  $x_i := x_{i,0} \cdot \mathfrak{m} \in \dom\left(a_{\src(e_0)}\right)$ and $y_i := y_{i,0} \cdot \mathfrak{m}' \in \dom\left(a_{\src(e_0)}\right)$; 
			\item if $e_0 \in \mathcal{E}(T)$, then $x_i, y_i \notin \dom\left(a_{\trg(e_0)}\right)$, and if $e_0 \notin \mathcal{E}(T)$, then $x_i, y_i \notin \dom\left(t_{e_0}\right)$;
			\item $\alpha_i$ and $\beta_i$ share pairwise the same $\mathcal{H}$-phenotype.
		\end{itemize} 
		Then there exists a word $\mathfrak{M}$ in the vertex and edge generators and, for every $i \in \llbracket 1, \Sigma \rrbracket$, there exist $\mathcal{H}$-preactions $\gamma_i$ on countable sets $Z_i$ containing $X_i \cup Y_i$ extending $\alpha_i$ and $\beta_i$ such that $x_{i,0} \cdot \mathfrak{M} = y_{i,0}$.
		
		Moreover, for every $i \in \llbracket 1, \Sigma \rrbracket$, denoting by $\mathcal{F}^{(i)}$ the $\mathcal{H}$-graph of $\alpha_i$ and by $\mathcal{G}^{(i)}$ the one of $\beta_i$, the resulting $\mathcal{H}$-graph $\mathcal{H}^{(i)}$ contains both $\mathcal{F}^{(i)}$ and $\mathcal{G}^{(i)}$ as disjoint sub-$\mathcal{H}$-graphs and the quotient $\mathcal{H}^{(i)} / \left(\mathcal{F}^{(i)} \sqcup \mathcal{G}^{(i)}\right)$ is a tree.
	\end{lemma}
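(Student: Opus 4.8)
The plan is to build, for each $i$, the preaction $\gamma_i$ by attaching to the disjoint union $\alpha_i \sqcup \beta_i$ a \emph{bridge}: a finite simple path of $\left\langle a_\bullet\right\rangle$-orbits joining the vertex $V_i$ of $\mathcal{F}^{(i)}$ deriving from $x_i$ to the vertex $W_i$ of $\mathcal{G}^{(i)}$ deriving from $y_i$. Since $\mathfrak{m}$ and $\mathfrak{m}'$ are already prescribed and uniform, it suffices to produce a single word $w$, independent of $i$, with $x_i\cdot w=y_i$ in each $\gamma_i$; then $\mathfrak{M}:=\mathfrak{m}\,w\,(\mathfrak{m}')^{-1}$ does the job. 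The word $w$ will be the word associated to a fixed edge-path type $c$ in $\mathcal{H}$ based at $\src(e_0)$ and beginning with $e_0$, together with a fixed choice of powers of the vertex generators; the whole difficulty is to choose $c$ (and the powers) \emph{once and for all} so that, for every one of the finitely many indices $i$, a bridge of type $c$ can be labelled so as to join $V_i$ to $W_i$.

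First I would record the arithmetic meaning of the hypothesis. Writing $N_i=\left|x_i\left\langle a_{\src(e_0)}\right\rangle\right|$ and $M_i=\left|y_i\left\langle a_{\src(e_0)}\right\rangle\right|$, both vertices $V_i,W_i$ are of type $\src(e_0)$ and non-saturated relatively to $e_0$, and the equality of $\mathcal{H}$-phenotypes gives $\Ph_{\mathcal{H},\src(e_0)}(N_i)=\Ph_{\mathcal{H},\src(e_0)}(M_i)$. A bridge of type $c=g_1,\dots,g_L$ joining a label $N$ to a label $M$ is exactly a solution $N=A_0,A_1,\dots,A_L=M$ of the Transfer Equations
\[
\frac{A_{j-1}}{A_{j-1}\wedge k_{g_j,\src}}=\frac{A_j}{A_j\wedge k_{g_j,\trg}}\qquad(1\le j\le L),
\]
and, realising such a label-path as a finite connected $\mathcal{H}$-graph via Lemma \ref{arete}, Proposition \ref{stab} forces $\Ph_{\mathcal{H},\src(e_0)}(N)=\Ph_{\mathcal{H},\src(e_0)}(M)$. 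The key converse I must establish is that equality of phenotypes is \emph{sufficient} for the existence of such a label-path, and that the type $c$ may be taken common to the finitely many pairs.

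For the existence of a common type I would argue prime by prime, using Proposition \ref{pgcd} and Lemma \ref{subhgraph}. For a prime $p\in\mathcal{P}_{\mathcal{H},\src(e_0)}(N_i)$ one has $|N_i|_p=|M_i|_p$ automatically, and Proposition \ref{pgcd} shows this valuation is preserved along any cycle; the primes that must be adjusted are those outside $\mathcal{P}_{\mathcal{H},\src(e_0)}$, which are precisely the ones for which either some cycle of $\mathcal{H}$ is non-unimodular at $p$, or some edge-path makes the $p$-valuation drop below threshold. Each such prime can then be modified by traversing the corresponding cycle of $\mathcal{H}$ (for the non-unimodular ones) or a suitable there-and-back excursion (for the threshold ones), and Lemma \ref{subhgraph} lets me carry this out inductively on the edges of $\mathcal{H}$, peeling off a leaf edge or a cycle edge and reducing to the $\mathcal{H}'$-phenotype, with base case the single loop, i.e. $\BS(m,n)$ as treated in \cite{hautetransitivite}. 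Crucially, $\Sigma$ is finite, so I may take $c$ to be one path running through \emph{all} the cycles of $\mathcal{H}$ (to clear the non-unimodular primes of every pair) followed by enough there-and-back excursions (to adjust the threshold primes of every pair); for each fixed $i$ the remaining freedom in the intermediate labels $A_j$ then realizes precisely the transition from $N_i$ to $M_i$.

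Finally I would assemble the preactions. Given the common type $c$, for each $i$ I build the bridge as a simple path of orbits carrying the labels $A_0=N_i,\dots,A_L=M_i$ chosen above, and realize $\gamma_i$ by Lemma \ref{retour} applied to $\mathcal{F}^{(i)}\sqcup\mathcal{G}^{(i)}$ together with this path; since contracting $\mathcal{F}^{(i)}$ and $\mathcal{G}^{(i)}$ leaves a segment, hence a tree, the last assertion follows. The first and last edges are chosen of type $e_0$: the non-saturation of $x_i$ and $y_i$ relatively to $e_0$, together with the hypothesis that $t_{e_0}$ (resp. $a_{\trg(e_0)}$) is defined neither on $x_i$ nor on $y_i$, lets me glue the bridge so that $w$ leaves exactly from $x_i$ and arrives exactly at $y_i$, the intermediate powers of $w$ being absorbed by the freedom of Constructions A and B in choosing entry points inside each orbit. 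The main obstacle is exactly this uniformity of $\mathfrak{M}$: a single element of $\Gamma$ must link the $\Sigma$ pairs whose phenotypes may differ from one pair to the next, and it is the finiteness of $\Sigma$ combined with the fact that the phenotype is the \emph{exact} obstruction to label-connectivity that makes a common type $c$ possible.
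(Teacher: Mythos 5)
Your strategy is essentially the paper's: reduce everything to producing one word $w$ with $x_i\cdot w=y_i$ for every $i$, realize the connecting piece as a tree of freshly created orbits glued between $\mathcal{F}^{(i)}$ and $\mathcal{G}^{(i)}$ via Constructions A/B and Lemma \ref{retour}, clear the primes outside $\mathcal{P}_{\mathcal{H},\src(e_0)}$ by iterating around cycles and by backward excursions, use the finiteness of $\Sigma$ to make the iteration counts uniform, and organize the whole thing as a d\'evissage induction on the edges of $\mathcal{H}$ through Lemma \ref{subhgraph}. The necessity direction you include (via Lemma \ref{arete} and Proposition \ref{stab}) is correct but not needed for the statement.

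As an induction, however, your plan has two concrete holes. First, the base case is not only the single loop $\BS(m,n)$: when $\mathcal{H}$ is a tree, peeling off leaf edges terminates at a single non-loop edge, i.e.\ the amalgam $\left\langle a,b\mid a^{k}=b^{l}\right\rangle$ with $|k|,|l|\geq 2$, which is not a Baumslag--Solitar group and is not covered by \cite{hautetransitivite}; it needs its own (short) argument. Second, and more seriously, the peeling step can produce a subgraph $\mathcal{H}'$ to which neither the induction hypothesis nor the lemma itself applies, namely a loop with one label equal to $\pm 1$. This happens exactly when $\mathcal{H}$ is a wedge of two loops each carrying a label $\pm 1$, e.g.\ $\left\langle a,s,t\mid sas^{-1}=a^{k},\ tat^{-1}=a^{l}\right\rangle$, which is a perfectly legitimate reduced, non-amenable GBS group falling under the lemma. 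The exclusion of unit-label loops from the hypotheses is not cosmetic: the gluing steps repeatedly use that the relevant labels have absolute value at least $2$ (together with divisibility of the orbit sizes) to shift by a vertex generator into a coset on which the next partial bijection is still undefined, and this fails when a label is $\pm 1$. So in that configuration one cannot ``reduce to the $\mathcal{H}'$-phenotype'' and a separate direct construction is required; without it the induction does not close.
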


	\begin{proof}
		We proceed by induction on the number of positive edges of $\mathcal{H}$. The idea is to use a technique of ``dévissage'': writing $\mathcal{H}$ as $\mathcal{H} = \mathcal{H}' \cup \{e'\}$ for some subgraph $\mathcal{H}'$ and some edge $e'$ with source in $\mathcal{V}\left(\mathcal{H}'\right)$, we will build words $\mathfrak{m}_0$, $\mathfrak{m}'_0$ in the vertex and edge generators of $\Gamma$ such that there exists an edge $e'_0 \in \mathcal{E}(\mathcal{H}')$ satisfying the following conditions: \begin{itemize}
			\item for every $i \in \llbracket 1, \Sigma \rrbracket$, the elements $x_{i, 0} \cdot \mathfrak{m}_0$ and $y_{i, 0} \cdot \mathfrak{m}_0'$ belong to $\dom\left(a_{\src\left(e'_0\right)}\right)$;
			\item if $e'_0 \in \mathcal{E}(T)$, then neither $x_{i, 0} \cdot \mathfrak{m}_0$ nor $y_{i, 0} \cdot \mathfrak{m}'_0$ belongs to $\dom\left(a_{\trg\left(e'_0\right)}\right)$, and if $e'_0 \notin \mathcal{E}(T)$, then neither $x_{i, 0} \cdot \mathfrak{m}_0$ nor $y_{i, 0} \cdot \mathfrak{m}'_0$ belongs to $\dom\left(t_{e_0'}\right)$;
			\item for every $i \in \llbracket 1, \Sigma \rrbracket$, the $\mathcal{H}$-preactions defined by the single $\left\langle a_{\src\left(e'_0\right)} \right\rangle$-orbits of $x_{i, 0} \cdot \mathfrak{m}_0$ and $y_{i, 0} \cdot \mathfrak{m}'_0$, respectively, share the same $\mathcal{H}'$-phenotype $\Ph_{\mathcal{H}', \src\left(e'_0\right)}$.
		\end{itemize}
		This will allow us to apply the induction hypothesis to the collection of $\mathcal{H}'$-preactions defined by the $\left\langle a_{\src\left(e'_0\right)} \right\rangle$-orbits of $x_{i, 0} \cdot \mathfrak{m}_0$ and $y_{i, 0} \cdot \mathfrak{m}'_0$ and to conclude, because an $\mathcal{H}'$-preaction is a particular case of an $\mathcal{H}$-preaction. 
		
		Let us prove the proposition $P_r$: ``for every reduced graph of groups $\mathcal{H}$ that has $r$ positive edges and that is different from a loop with at least one label equal to $\pm 1$, the conclusions of Lemma \ref{liaison} holds.''   
		\paragraph{Base case:} If $r=1$, then $\mathcal{H}$ consists of two edges $e, \overline{e}$. \\
		\\
		\textbf{Case 1: $\mathcal{H}$ has a single vertex.} If $\mathcal{H}$ has a single vertex, $\Gamma = \BS(m,n)$ for some integers $m,n$ which are both different from $\pm 1$. We adapt the algorithm introduced in \cite[Theorem 4.17]{solitar1} to obtain high topological transitivity.  Let us denote $(k_{e, \src}, k_{e, \trg}) = (m,n)$. Up to exchanging $e$ and $\overline{e}$, we can assume that $e_0 = e$. Denote by $b$ the vertex generator and let $s = \src(e_0)$. The idea is to get rid off all ``bad'' primer numbers in the cardinalities of the $\langle b \rangle$-orbits: following first the edge $e$ a sufficiently large amount of time, we will define inductively the point $x_i \cdot t_e^k$ and its $\langle b \rangle$-orbit in such a way that we delete all the prime numbers $p$ in the cardinal $N^{(i)}_k$ of $x_i \cdot t_e^k \langle b \rangle$ that satisfy $|n|_p < |m|_p$ or $\left|N^{(i)}_k\right|_p \leq |m|_p$ for some $i$. Then, we follow the edge $\overline{e}$ a sufficiently large amount of time; namely, we define inductively the point $x_i \cdot t_e^kbt_e^{-l}$ in such a way that we delete all the prime numbers $p$ in the cardinal $M^{(i)}_l$ of $x_i \cdot t_e^kbt_e^{-l} \langle b \rangle$ that satisfy $|n|_p > |m|_p$ or $\left|N^{(i)}_k\right|_p \leq |n|_p$ for some $i$. We do the same starting from $y_i$, and we connect the resulting $\langle b \rangle$-orbits.
		
		Let $N^{(i)}$ be the cardinality of the $\langle b \rangle$-orbit of $x_i$. Following Construction \hyperlink{ext'}{A'} we define by induction on $k$ the point $x_i \cdot t_e^k$ by imposing the cardinality $N^{(i)}_k$ of its $\langle b \rangle$-orbit: $N^{(i)}_0 = N^{(i)}$ and 
		\begin{equation*}
			\left|N^{(i)}_{k+1}\right|_p=
			\begin{cases}
				\left|N^{(i)}_k\right|_p + |n|_p - |m|_p & \text{if } \left|N^{(i)}_k\right|_p > |m|_p \\
				0  & \text{otherwise} 
			\end{cases} 
		\end{equation*} (legit because the edge $e$ and the integers $N^{(i)}_{k+1}$ and $N^{(i)}_k$ satisfy the Transfer Equation in~\ref{ggraph})
		and we repeat the process until $ \left|N^{(i)}_{k}\right|_p=0$ for every $i \in \llbracket 1, \Sigma \rrbracket$ such that $\alpha_i$ has a finite phenotype and every prime number $p$ satisfying \begin{itemize}
			\item either $\left|N^{(i)}\right|_p \leq |m|_p$ for some $i \in \llbracket 1, \Sigma \rrbracket$;
			\item or $|n|_p < |m|_p$.
		\end{itemize}
		If $\alpha_i$ has infinite phenotype, we still have $N_k^{(i)} = \infty$.
		We fix the $k \in \mathbb{N}$ obtained at the end of this process. Let $M^{(i)} := N_k^{(i)}$ be the cardinality of the $\langle b \rangle$-orbit of $x_i \cdot t_e^k$. Following Construction \hyperlink{ext'}{A'}, we define its image under $t_e$ by creating a new point whose $\langle b \rangle$-orbit has cardinality $\frac{M^{(i)}|n|}{M^{(i)} \wedge m}$ (legit, because $M^{(i)}$, $\frac{M^{(i)}|n|}{M^{(i)} \wedge m}$ and the edge $e$ satisfy the Transfer Equation). Likewise, since $|n| \geq 2$, we can define the image of $x_i \cdot t_e^{k+1} b$ under $t_e^{-1}$ by letting the cardinality $M^{(i)}_1$ of its $\langle b \rangle$-orbit to be equal to $M^{(i)}$. \\
		Then we apply successively Construction \hyperlink{ext'}{A'} in order to define recursively $x_i \cdot t_e^{k+1} b t_e^{-l}$ by imposing the cardinality of its $\langle b \rangle$-orbit: $M^{(i)}_1 = M^{(i)}$ and 
		\begin{equation*}
			\left|M^{(i)}_{l+1}\right|_p=
			\begin{cases}
				\left|M^{(i)}_l\right|_p + |m|_p - |n|_p & \text{if } \left|M^{(i)}_l\right|_p > |n|_p \\
				0  & \text{otherwise} 
			\end{cases} 
		\end{equation*}
		and we repeat the process until $ \left|M^{(i)}_{l}\right|_p=0$ for every $i \in \llbracket 1, \Sigma \rrbracket$ such that $\alpha_i$ has a finite phenotype and every $p$ satisfying \begin{itemize}
			\item either $\left|M^{(i)}_l\right|_p \leq |n|_p$ for some $i \in \llbracket 1, \Sigma \rrbracket$;
			\item or $|m|_p < |n|_p$.
		\end{itemize}      
		Hence, the cardinality of the $\langle b \rangle$-orbit of $x_i t_e^{k+1} b t_e^{-l}$ is equal to the $\mathcal{H}$-phenotype $P^{(i)} := \Ph_{\mathcal{H},s}\left(N^{(i)}\right)$. 
		Denote by $W$ the word $t_e^{k+1}bt_e^{-l}$ and $W'$ the word obtained by applying the same algorithm to the family of points $\left(y_i\right)_{i \in \llbracket 1, \Sigma \rrbracket}$. 
		As neither $x_i \cdot W$ nor $y_i \cdot W'$ belongs to $\dom(t_e^{-1})$, we can apply Construction \hyperlink{ext'}{A'} in order to define a new point $x_i \cdot Wt_e^{-1}$ and build its $\langle b \rangle$-orbit of cardinality $\frac{P^{(i)}|m|}{P^{(i)} \wedge n}$ and as $|m| \geq 2$, one can impose $x_i \cdot Wt_e^{-1}b = y_i \cdot W't_e^{-1}$ (hence the $\langle b \rangle$-orbit of $y_i \cdot W't_e^{-1}$ has cardinality $\frac{P^{(i)}|m|}{P^{(i)} \wedge n}$). Thus the word $\mathfrak{M} = \mathfrak{m}Wt_e^{-1}b(W't_e^{-1})^{-1}\mathfrak{m}'^{-1}$ is suitable. \\
		\\
		\textbf{Case 2: $\mathcal{H}$ has two vertices.} Otherwise, $\mathcal{H}$ consists of two edges $e, \overline{e}$ such that $\src(e) \neq  \trg(e)$. We denote $k_{e,\src} = k$ and $k_{e, \trg} = l$, so that $\Gamma \simeq \left\langle a, b | a^k = b^l \right\rangle$ with $|k|, |l| \geq 2$. Without loss of generality let us assume that $e_0 = e$ and let us denote $s = \src\left(e_0\right)$. 
		Let $N^{(i)}$ (\textit{resp.} $M^{(i)}$) be the cardinality of the $\langle a \rangle$-orbit of $x_i$ (\textit{resp.} $y_i$). Following Construction \hyperlink{quot'}{B'}, we can define the $\langle b \rangle$-orbit of $x_i$ (\textit{resp.} $y_i$) by its cardinality $\frac{N^{(i)}|l|}{N^{(i)} \wedge k}$ (\textit{resp.} $\frac{M^{(i)}|l|}{M^{(i)} \wedge k}$) because the integers $N^{(i)}$ and $\frac{N^{(i)}|l|}{N^{(i)} \wedge k}$ and the edge $e$ satisfy the Transfer Equation so that, as $|l| \geq 2$, the image of $x_i \cdot b$ under $a$ is not defined. Let us apply Construction \hyperlink{quot'}{B'} in order to define the $\langle a \rangle$-orbit of $x_i \cdot b$ by its cardinality $\frac{N^{(i)}|k|}{N^{(i)} \wedge k}$ (\textit{resp.} $\frac{M^{(i)}|k|}{M^{(i)} \wedge k}$).
		We define \begin{align*}
			P^{(i)} &= \Ph_{\mathcal{H},s}\left(N^{(i)}\right) \\
			&= \prod_{p \in \mathcal{P}, \left|N^{(i)}\right|_p > |k|_p}p^{\left|N^{(i)}\right|_p} \\
			&(= \Ph_{\mathcal{H},s}\left(M^{(i)}\right)).
		\end{align*}  Notice that \[\frac{N^{(i)}}{N^{(i)} \wedge k} = \frac{P^{(i)}}{P^{(i)} \wedge k}.\]
		Hence applying Construction \hyperlink{quot'}{B'} we can define the $\langle b \rangle$-orbit of both $x_i \cdot ba$ and $y_i \cdot ba$ (because $|k| \geq 2$) by its common cardinality $\frac{P^{(i)}|l|}{P^{(i)} \wedge k}$ and define $y_i \cdot ba = x_i \cdot bab$ (because $|l| \geq 2$). Hence the word $\mathfrak{M} = \mathfrak{m}baba^{-1}b^{-1}\mathfrak{m}'^{-1}$ is suitable. 
		The construction we did in terms of $\mathcal{H}$-graphs is represented in Figure \ref{figureinitialisation}. 
		
		\begin{figure}[ht]
			\center
			\begin{tikzpicture}
				\node at (0,0) {$\mathcal{G}^{(i)}$};
				\node at (0,6) {$\mathcal{F}^{(i)}$};
				\draw (0,0) circle (2);
				\draw (0,6) circle (2);
				\node[draw,circle,fill=gray!50] (A) at (2,6) {$N^{(i)}$};
				\node[draw,circle,] (B) at (5,6) {$\frac{N^{(i)}|l|}{N^{(i)} \wedge k}$};
				\node[draw,circle,fill=gray!50] (C) at (8,6) {$\frac{N^{(i)}|k|}{N^{(i)} \wedge k}$};
				\node[draw,circle,fill=gray!50] (D) at (2,0) {$M^{(i)}$};
				\node[draw,circle] (E) at (5,0) {$\frac{M^{(i)}|l|}{M^{(i)} \wedge k}$};
				\node[draw,circle,fill=gray!50] (F) at (8,0) {$\frac{M^{(i)}|k|}{M^{(i)} \wedge k}$};
				\node[draw,circle] (G) at (10,3) {$\frac{P^{(i)}|l|}{P^{(i)} \wedge k}$};
				\draw[>=latex, directed] (A) to (B);
				\draw[>=latex, directed] (C) to (B);
				\draw[>=latex, directed] (D) to (E);
				\draw[>=latex, directed] (F) to (E);
				\draw[>=latex, directed] (F) to (G);
				\draw[>=latex, directed] (C) to (G);
				\node at (3,-3) {$\mathcal{H} =$};
				\node[draw,circle,fill=gray!50] (a) at (4,-3) {};
				\node[draw,circle] (b) at (6,-3) {};
				\draw[>=latex, directed] (a) to node[very near start, below]{$k$} node[very near end, below]{$l$} (b);
			\end{tikzpicture}
			\caption{Base case (case of a segment).}
			\label{figureinitialisation}
		\end{figure}
		\text{ } \\
		Hence the theorem is proved for $r=1$. 
		\paragraph{Induction step:} Let us assume that $P_r$ is true and let us consider a GBS group $\Gamma$ which is defined by a reduced graph of groups $\mathcal{H}$ which consists of $r+1$ positive edges. The idea will be to decompose $\mathcal{H}$ as \begin{itemize}
		    \item a connected subgraph $\mathcal{H}'$;
		    \item an edge $e$ with source in $\mathcal{H}'$.
		\end{itemize}
		Then, using the same kind of constructions as in the base case, we will build a word $W$ (\textit{resp.} $W'$) of $\Gamma$ in such a way that, denoting by $K^{(i)}$ (\textit{resp.} $K'^{(i)}$) the cardinality of the $\langle b \rangle$-orbit of $x_i \cdot W$ (\textit{resp.} $y_i \cdot W'$), one has $\bm{\Ph}_{\mathcal{H}', \src(e)}\left(K^{(i)}\right) = \bm{\Ph}_{\mathcal{H}', \src(e)}(K'^{(i)})$. This equality will be obtained using Lemma~\ref{subhgraph}. Then, the induction hypothesis will lead to a word that connects $x_i \cdot W$ and $y_i \cdot W'$.
		
		We distinguish two cases: \\
		\\
		\textbf{Case 1: $\mathcal{H}$ is a tree.} In particular, $T = \mathcal{H}$ and, denoting by $f$ any leaf of $\mathcal{H}$, the subgraph of groups $\mathcal{H}'$ induced by the vertices of $\mathcal{V}\left(\mathcal{H}\right) \setminus \{f\}$ satisfies the induction hypothesis. Let us denote by $G$ the fundamental group of $\mathcal{H}'$. In this case, denoting by $a$ the vertex generator associated to $f$, $b$ the vertex generator associated to the neighbour $g$ of $f$ and $e$ the edge which connects $g$ to $f$ (labeled $k$ at $f$ and $l$ at $g$), one has \begin{align*}\Gamma &\simeq \left\langle G, a | \mathcal{R}_G, b^l = a^k \right\rangle \\
			&= G *_{b^l = a^k} \left\langle a \right\rangle \end{align*}
		(where $\mathcal{R}_G$ is a set of relations defining $G$). \\
		\\
		\textbf{Step 1.1: Reducing to the case where $\mathbf{e_0 = \overline{e}}$}. We first prove that one can assume that the edge $e_0$ of the statement of the lemma can be assumed to be the edge $\overline{e}$ with source the leaf $f$. We define $s := \src(e_0)$ and we denote by $e_1,...,e_r$ the unique (possibly empty) reduced edge path of $\mathcal{E}\left(T\right)$ which connects $\trg(e_0)$ to $f$ (so $e_r = e$). Let us denote $s_i = \src(e_i)$ and $(k_i,l_i) = (k_{e_i,\src}, k_{e_i,\trg})$ (hence $(k_r, l_r) = (l,k)$). Let $N^{(i)}$ (\textit{resp.} $M^{(i)}$) be the cardinality of the $\langle a_s \rangle$-orbit of $x_i$ (\textit{resp.} $y_i$). 
		Let us apply Construction \hyperlink{quot'}{B'} and let $\kappa = a_{\trg(e_0)}$. We define the $\langle a_{\trg(e_0)} \rangle$-orbit of $x_i$ (\textit{resp.} $y_i$) by its cardinality $\left| \frac{N^{(i)} l_0}{N^{(i)} \wedge k_0}\right|$ (\textit{resp.} $\left| \frac{M^{(i)} l_0}{M^{(i)} \wedge k_0}\right|$), and we let $x_i' = x_i \cdot \kappa$ (\textit{resp.} $y_i' = y_i \cdot \kappa$). In that case, as $e_0$ is not a loop, one has $x_i \neq x_i'$ (\textit{resp.} $y_i \neq y_i'$) because no label of $e_0$ is equal to $\pm 1$, so the only generator whose domain contains $x_i'$ (\textit{resp.} $y_i'$) is $a_{\trg(e_0)}$. Applying successively Construction \hyperlink{quot'}{B'}, we define the $\langle a_{s_j} \rangle$-orbit of $x_i'$ (\textit{resp.} $y_i'$) by induction on $j$ by its cardinality \[N^{(i)}_{j+1} = \left| \frac{N^{(i)}_j l_j}{N^{(i)}_j \wedge k_j}\right|,\] 
		with $s_{r+1} = f$ (\textit{resp.} $M^{(i)}_{j+1} = \left| \frac{M^{(i)}_j l_j}{M^{(i)}_j \wedge k_j}\right|$). Denote $N'^{(i)}$ (\textit{resp.} $M'^{(i)}$) the cardinality of the $\langle a \rangle$-orbit of $x_i'$ (\textit{resp.} of $y_i'$). By Proposition \ref{decphen}, $\Ph_{\mathcal{H},f}\left(N'^{(i)}\right) = \Ph_{\mathcal{H},f}\left(M'^{(i)}\right) =: P^{(i)}$. \\
		\\
		\textbf{Step 1.2: Applying the induction hypothesis.} Notice that $\Ph_{\mathcal{H}',g}\left(\frac{N'^{(i)}|l|}{N'^{(i)} \wedge k}\right) = \Ph_{\mathcal{H}',g}\left(\frac{P^{(i)}|l|}{P^{(i)} \wedge k}\right)$ by the point \ref{feuille} of Lemma \ref{subhgraph}.
		Hence the numbers $N''^{(i)} := \frac{N'^{(i)}|l|}{N'^{(i)} \wedge k}$ and $M''^{(i)} := \frac{M'^{(i)}|l|}{M'^{(i)} \wedge k}$ share the same $\mathcal{H}'$-phenotype (relatively to the vertex $g$). So the induction hypothesis applied to \begin{itemize}
		    \item the graph $\mathcal{H}'$ and any edge of $\mathcal{H}'$ with source $g$;
		    \item a collection of $\mathcal{H}'$-preactions $O_1^{(1)}, \ldots, O_1^{(\Sigma)}, O_2^{(1)}, \ldots, O_2^{(\Sigma)}$ such that $O_1^{(i)}$ consists of a single $\langle b \rangle$-orbit of cardinal $N''^{(i)}$ and $O_2^{(i)}$ consists of a single $\langle b \rangle$-orbit of cardinal $M''^{(i)}$;
		    \item arbitrary points $\omega_1^{(1)}, \ldots, \omega_1^{(\Sigma)}, \omega_2^{(1)}, \ldots, \omega_2^{(\Sigma)}$ of $O_1^{(1)}, \ldots, O_1^{(\Sigma)}, O_2^{(1)}, \ldots, O_2^{(\Sigma)}$, respectively,
		\end{itemize}
		allows us to produce a family of $\Sigma$ auxiliary $\mathcal{H}'$-preactions containing points $\omega_1^{(i)}$, $\omega_2^{(i)}$ whose $\langle b \rangle$-orbits $O_1^{(i)}$ and $O_2^{(i)}$ are disjoint and of cardinals $N''^{(i)}$ and $M''^{(i)}$, respectively, and a word $\mathfrak{M}'$ in the generators of $G$ satisfying ${\omega_1}^{(i)} \cdot \mathfrak{M}' = {\omega_2}^{(i)}$. As $|l_r| \geq 2$ and neither $x_i' \cdot a$ nor $y_i' \cdot a$ belongs to the domain of any generator of $\mathcal{H}'$, one can apply Construction \hyperlink{quot}{B} to the element $x_i' \cdot a$ and ${\omega_1}^{(i)}$ and to the element $y_i' \cdot a$ and ${\omega_2}^{(i)}$, respectively, and the edge $e$. With this construction, the $\langle b \rangle$-orbit of $x_i' \cdot a$ (\textit{resp.} $y_i' \cdot a$) has cardinality $\frac{N'^{(i)}|l|}{N'{(i)} \wedge k}$ (\textit{resp.} $\frac{M'^{(i)}|l|}{M'{(i)} \wedge k}$) and we get $x_i' \cdot a \mathfrak{M}' = y_i' \cdot a$. Hence the word $\mathfrak{M}$ defined by $\mathfrak{m}\kappa a \mathfrak{M}'a^{-1}\kappa^{-1}\mathfrak{m}'^{-1}$ is suitable. In terms of $\mathcal{H}$-graphs, the construction we did is represented in Figure \ref{figureinductiontree}.
		\begin{figure}[ht]
			\center
			\begin{tikzpicture}
				\node at (0,0) {$\mathcal{G}^{(i)}$};
				\node at (0,4) {$\mathcal{F}^{(i)}$};
				\node at (9.5,2) {induction hypothesis};
				\node[draw,circle,fill=black] (t) at (1.5,0) {\color{white} $M^{(i)}$};
				\node[draw,circle,fill=black] (s) at (1.5,4) {\color{white} $N^{(i)}$};
				\draw (0,0) circle (1.5);
				\draw (0,4) circle (1.5);
				\draw[dashed] (9.5,2) ellipse (3cm and 1.5cm);
				\node[draw,circle,fill=gray!50] (A) at (4,4) {$N'^{(i)}$};
				\node[draw,circle] (B) at (6,3) {$\frac{N'^{(i)}|l|}{N'^{(i)} \wedge k}$};
				\node[draw,circle,fill=gray!50] (C) at (4,0) {$M'^{(i)}$};
				\node[draw,circle] (D) at (6,1) {$\frac{M'^{(i)}|l|}{M'^{(i)} \wedge k}$};
				\draw[>=latex, directed] (B) to (A);
				\draw[>=latex, directed] (D) to (C);
				\node at (3,-3) {$\mathcal{H} =$};
				\node[draw,circle,fill=gray!50] (a) at (4,-3) {};
				\node[draw,circle] (b) at (6,-3) {};
				\node[draw,circle, fill=black] (c) at (7,-3) {};
				\draw[>=latex, directed] (b) to node[midway, below]{$e$} node[very near start, below]{$l$} node[very near end, below]{$k$} (a);
				\draw[dashed] (7,-3) ellipse (1cm and 0.5cm);
				\draw[>=latex, directed, dashed] (t) to (C);
				\draw[>=latex, directed, dashed] (s) to (A);
			\end{tikzpicture}
			\caption{Applying the induction hypothesis (1st case).}
			\label{figureinductiontree}
		\end{figure}
		\text{ } \\
		\\
		\textbf{Case 2: $\mathcal{H}$ is not a tree}. Let $T$ be a spanning tree and $e \in \mathcal{E}(\mathcal{H}) \setminus \mathcal{E}(T)$. One has \[\Gamma \simeq \left\langle G, t | ta^nt^{-1} = b^m \right\rangle \]
		where $G$ is the GBS group defined by the subgraph of groups $\mathcal{H}'$ whose set of vertices is $\mathcal{V}\left(\mathcal{H}\right)$ and whose set of edges is $\mathcal{E}\left(\mathcal{H}\right) \setminus \{e, \overline{e}\}$, and where we denote $(a, b) = \left(a_{\src(e)}, a_{\trg(e)}\right)$ and $(n,m) = \left(k_{e, \src}, k_{e, \trg}\right)$. \\
		\\
		\textbf{Case 2.A: Assume first that we can choose $e$ so that $\mathcal{H}'$ satisfies the induction hypothesis.} This means that $\mathcal{H}'$ is not a loop one of whose label is equal to $\pm 1$. Let us denote $(s_a,s_b) := (\src(e), \trg(e))$. As in Step 1.1 of the previous Case 1, we extend the $\mathcal{H}$-preactions $\alpha_i$ and $\beta_i$ to get a vertex or edge generator $\kappa$ such that \begin{itemize}
			\item both $x_i' := x_i \cdot \kappa \in \dom(a)$ and $y_i' := y_i \cdot \kappa$ belong to $\dom(a)$;
			\item neither $x_i'$ nor $y_i'$ belongs to $\dom(t_e)$.
		\end{itemize} 
		Let us denote by $N'^{(i)}$ (\textit{resp.} $M'^{(i)}$) the cardinality of the $\langle a \rangle$-orbit of $x_i'$ (\textit{resp.} of $y_i'$). One has $\Ph_{\mathcal{H},s_a}\left(N'^{(i)}\right) = \Ph_{\mathcal{H},s_a}\left(M'^{(i)}\right)$. \\
		\\
		\textbf{Case 2.A.a: Let us first assume that $e$ is not a loop} (\textit{i.e.} $s_a \neq s_b$). Let $e_1,...,e_r$ be the edge path connecting $s_a$ to $s_b$ in $T$ and $(k_1,l_1),...,(k_r,l_r)$ be the labels of its edges. For every $j \in \llbracket 1,r \rrbracket$, we denote $s_j = \src(e_j)$ and $s_{r+1} = s_b$. Let us define the cycle $C := e, \overline{e_r}, ..., \overline{e_1}$. For every prime number $p$, let us denote \[C_p = \max \left(|n|_p, \max_{s \in \llbracket 0, r \rrbracket}\left(|n|_p - |m|_p + |l_s|_p + \sum_{j=s+1}^r|l_j|_p - |k_j|_p\right)\right)\] and \[\overline{C}_p = \max\left(|m|_p + \sum_{j=1}^r|k_j|_p - |l_j|_p, \max_{s \in \llbracket 0, r \rrbracket}\left(|k_{s+1}|_p + \sum_{m=1}^s|k_m|_p - |l_m|_p\right)\right),\] so that $p \in \mathcal{P}_{C, s_a}(N)$ (\textit{cf.} Definition \ref{defphenotype}) if and only if $|N|_p > \max\left(C_p, \overline{C}_p\right)$ and $\sum_{i=1}^r|k_i|_p-|l_i|_p + |m|_p - |n|_p = 0$ (notice that this last equality implies that $C_p = \overline{C}_p$). \\
		\\
		To apply the induction hypothesis, the idea of our algorithm is to follow $C$ a certain amount of time to define recursively the cardinality $N_j^{(i)}$ of the $\langle a \rangle$-orbit of $x_i' \cdot t_e^j$ so that, for every $i$ such that $\alpha_i$ has a finite phenotype, the $p$-adic valuation of $N_j^{(i)}$ is strictly decreasing with $j$ if $\left(\sum_{j=1}^r|k_j|_p-|l_j|_p\right) + |m|_p - |n|_p  < 0$ (Step 2.A.a.1). When it reaches $0$ for every $i$ such that $\alpha_i$ has finite phenotype, we apply $b$ and then we follow $\overline{C}$ to define recursively the cardinality $K_l^{(i)}$ of the $\langle a \rangle$-orbit of $x_i' \cdot t_e^{j+1}bt_e^{-l}$ so that its $p$-adic valuation strictly decreases with $l$ if $\left(\sum_{j=1}^r|k_j|_p-|l_j|_p\right) + |m|_p - |n|_p  > 0$ (Step 2.A.a.2). Notice that in infinite phenotype, the cardinals of all orbits remain infinite. At the end, for any $i$ such that $\alpha_i$ has finite phenotype, we will get $|K_l^{(i)}|_p = 0$ as soon as $p \notin \Ph_{C, s_a}\left(K_l^{(i)}\right)$ so we will be able to apply the point \ref{pasfeuille} of \ref{subhgraph} (Step 3). \\
		\\
		\textbf{Step 2.A.a.1: Removing the prime numbers satisfying $\mathbf{\left(\sum_{i=1}^r|k_i|_p-|l_i|_p\right) + |m|_p -}$ $\mathbf{|n|_p < 0}$.} For any integers $m, n$ we define the function $\varphi_{m,n} : \mathbb{N} \cup \{\infty\} \to \mathbb{N} \cup \{\infty\}$ by its $p$-adic valuation for every prime number $p$: for every $N \in \mathbb{N}$, we define  \[|\varphi_{m,n}(N)|_p= \begin{cases}
			\left|N\right|_p + |m|_p - |n|_p & \text{if } \left|N\right|_p > |n|_p \\
			0  & \text{otherwise} 
		\end{cases}\] and $\varphi_{m,n}(\infty) = \infty$.
		We define $M^{(i)}_{r+1} := \varphi_{m,n}\left(N'^{(i)}\right)$, 
		so that $\frac{N'^{(i)}}{N'^{(i)} \wedge n} = \frac{M^{(i)}_{r+1}}{M^{(i)}_{r+1} \wedge m}$. Hence, using Construction \hyperlink{ext'}{A'}, we can define a new point $x_i' \cdot t_e$ whose $\langle b \rangle$-orbit has cardinality $M^{(i)}_{r+1}$. Then we define recursively (with $j$ decreasing): \[M^{(i)}_j := \varphi_{k_j,l_j}\left(M^{(i)}_{j+1}\right)\] and, applying successively Construction \hyperlink{quot'}{B'}, we define the $\langle a_{s_j} \rangle$-orbits of $x_i' \cdot t_e$ by their cardinals $M^{(i)}_j$. Eventually, the $\langle a_{s_1} \rangle$-orbit of $x_i' \cdot t_e$ has cardinality $M^{(i)}_1$ where
		\begin{equation*}
			\left|M^{(i)}_{1}\right|_p=
			\begin{cases}
				\left|N'^{(i)}\right|_p + |m|_p - |n|_p + \sum_{j=1}^r|k_j|_p - |l_j|_p &\text{if } \left|N'^{(i)}\right|_p > C_p \\
				0  &\text{otherwise}. 
			\end{cases} 
		\end{equation*}
		It follows that, for every prime number $p$ satisfying the condition \eqref{eq: cond1} defined as follows: \begin{equation}\label{eq: cond1}\left(\sum_{j=1}^r|k_j|_p-|l_j|_p\right) + |m|_p - |n|_p  < 0 \text{ \ or \ } |N'^{(i)}|_p \leq C_p, \tag{$1$} \end{equation} one has $\left|M^{(i)}_1\right|_p < \left|N'^{(i)}\right|_p$ if $\alpha_i$ has a finite phenotype (otherwise, $M^{(i)}_1 = \infty$) and $\left|N'^{(i)}\right|_p \neq 0$. One also has $\left|M^{(i)}_1\right|_p = 0$ if $\left|N'^{(i)}\right|_p = 0$. We denote $N^{(i)}_1 = M^{(i)}_1$ and we repeat the process to build inductively on $j$ the new point $x_i' \cdot t_e^j$ and its $\langle a \rangle$-orbit of cardinality $N_j^{(i)}$. As $\left|N_{j+1}^{(i)}\right|_p < \left|N_j^{(i)}\right|_p$ for every prime number $p$ satisfying Condition~\eqref{eq: cond1} and such that $\left|N^{(i)}_j\right|_p \neq 0$ and every $i$ such that $\alpha_i$ has finite phenotype by construction, and as $\left|N^{(i)}_{j+1}\right|_p = 0$ if $\left|N^{(i)}_j\right|_p = 0$, there exists a $j \in \mathbb{N}$ such that: $\left|N^{(i)}_j\right|_p = 0$ for every $i \in \llbracket 1, \Sigma\rrbracket$ such that $\alpha_i$ has a finite phenotype and for every prime number $p$ satisfying one of the two following conditions: \begin{itemize} \item $\left(\sum_{j=1}^r|k_j|_p-|l_j|_p\right) + |m|_p - |n|_p  < 0$;
			\item $\left|N^{(i)}_j\right|_p \leq C_p$ for some $i \in \llbracket 1,\Sigma \rrbracket$.
		\end{itemize} 
		If $\alpha_i$ has infinite phenotype, we will simply get $N^{(i)}_j = \infty$. \\
		\\
		\textbf{Step 2.A.a.2: Removing the prime numbers satisfying $\mathbf{\left(\sum_{i=1}^r|k_i|_p-|l_i|_p\right) + |m|_p -}$ $\mathbf{|n|_p > 0}$.} Notice that the image of $x_i' \cdot t_e^j$ under $t_e$ is not yet defined. Hence, applying Construction \hyperlink{ext'}{A'}, we define a new point $x_i' \cdot t_e^{j+1}$ and its $\langle b \rangle$-orbit by its cardinality $\frac{N^{(i)}_j|m|}{N^{(i)}_j \wedge n}$. As $|m| \geq 2$, one has $x_i' \cdot t_e^{j+1} b \notin \dom(t_e^{-1})$. Thus, using Construction \hyperlink{ext'}{A'}, we can define a new point $x_i' \cdot t_e^{j+1} b t_e^{-1} =: x_i''$ and its $\langle a \rangle$-orbit by its cardinality $N^{(i)}_j$. Furthermore,$t_e$ and $a^{\pm 1}$ are the only generators whose domains contain $x_i''$. Let us denote $L^{(i)}_1 = N^{(i)}_j$. 
		
		By induction we define $L^{(i)}_{J+1} := \varphi_{l_J,k_J}\left(L^{(i)}_J\right)$ for every $J < r$, then $L^{(i)}_{r+1} = \varphi_{n,m}\left(L^{(i)}_r\right)$ and we use several times Construction \hyperlink{quot'}{B'} in order to define the $\langle a_{s_J} \rangle$-orbit of $x_i''$ by its cardinality $L^{(i)}_J$, then the image of $x_i''$ under $t_e^{-1}$ and its $\langle a \rangle$-orbit by its cardinality $L^{(i)}_{r+1}$. By construction one has:
		\begin{equation*}
			\left|L^{(i)}_{r+1}\right|_p=
			\begin{cases}
				\left|L^{(i)}_1\right|_p + |n|_p - |m|_p + \sum_{j=1}^r|l_j|_p - |k_j|_p &\text{if } \left|L^{(i)}_1\right|_p > \overline{C}_p \\
				0  &\text{otherwise}. 
			\end{cases} 
		\end{equation*}
		It follows that, for every prime number $p$ satisfying the condition \eqref{eq: cond2} defined as follows:  \begin{equation}\label{eq: cond2}\left(\sum_{j=1}^r|k_j|_p-|l_j|_p\right) + |m|_p - |n|_p  > 0 \text{ \ or \ } \left|L^{(i)}_1\right|_p \leq \overline{C}_p,\tag{$2$}\end{equation} we have $\left|L^{(i)}_{r+1}\right|_p < \left|L^{(i)}_1\right|_p$ if $\alpha_i$ has a finite phenotype and $\left|L^{(i)}_1\right|_p \neq 0$. One also has $\left|L^{(i)}_{r+1}\right|_p = 0$ if $\left|L^{(i)}_1\right|_p = 0$. We define $K^{(i)}_1 = L^{(i)}_{r+1}$ and we repeat the process in order to define the points $x_i'' \cdot t_e^{-l}$ and their $\langle a \rangle$-orbits of cardinality $K^{(i)}_l$. As for every prime number $p$ satisfying Condition~\eqref{eq: cond2} one has $\left|K^{(i)}_{l+1}\right|_p < \left|K^{(i)}_{l}\right|_p$ as soon as $\alpha_i$ has finite phenotype and $\left|K^{(i)}_l\right|_p \neq 0$, and as $\left|K^{(i)}_{l+1}\right|_p=0$ if $\left|K^{(i)}_l\right|_p = 0$, there exists an integer $l$ such that $\left|K^{(i)}_l\right|_p = 0$ for every $i \in \llbracket 1, \Sigma \rrbracket$ such that $\alpha_i$ has finite phenotype and for every $p$ such that: \begin{itemize} \item either $\left(\sum_{j=1}^r|k_j|_p-|l_j|_p\right) + |m|_p - |n|_p  > 0$;
			\item or $\left|K^{(i)}_l\right|_p \leq \overline{C}_p$ for some $i \in \llbracket 1,\Sigma \rrbracket$.
		\end{itemize} 
		In particular, if $\alpha_i$ has a finite phenotype, the cardinality $K^{(i)}_l =: K^{(i)}$ of the $\langle a \rangle$-orbit of the last created point $x_i'' \cdot t_e^{-l}$ satisfies the condition~\eqref{star} defined as follows:
		\begin{equation}\label{star} 
\left|K^{(i)}\right|_p = 0 \text{ \ for every prime $p$ such that} \ p \notin \mathcal{P}_{C, s_a}\left(K^{(i_0)}\right) \text{ for some $i_0 \in \llbracket 1,\Sigma \rrbracket$}.\tag{$\ast$}
\end{equation}

		We fix such an $l$. Notice that if we reapply the same algorithm in order to define $x_i'' \cdot t_e^{-(l+1)}$ and its $\langle a \rangle$-orbit of cardinality $K_{l+1}^{(i)}$, we get $K_{l+1}^{(i)} = K_l^{(i)}$. \\
		\\
		\textbf{Step 2.A.a.3: Applying the induction hypothesis}. Denoting by $x_i \cdot W := x_i'' \cdot t_e^{-l}$ (with $W := \kappa t_e^{j+1}bt_e^{-{l}}$), the only generators whose domains contain $x_i \cdot W$ are $t_e$ and $a$. Likewise we create new points in $Y_i$ and a word $W'$ such that $y_i \cdot W'$ has a $\langle a \rangle$-orbit of cardinality $K'^{(i)}$ satisfying Condition~\eqref{star} and such that only generators whose domains contain $y_i \cdot W'$ are $t_e$ and $a$. Let us reapply one last time the same algorithm to define the $\langle a_{s_J} \rangle$-orbits of $x_i \cdot W$ and of $y_i \cdot W'$ (the images of both elements under $t_e^{-1}$ are not defined yet).  \\
		Notice that, by the point \ref{pasfeuille} of Lemma \ref{subhgraph}, the equality $\Ph_{\mathcal{H},s_1}\left(K^{(i)}\right) = \Ph_{\mathcal{H},s_1}\left(K'^{(i)}\right)$ and condition~\eqref{star} imply that, for every $i$ such that $\alpha_i$ has finite phenotype: \[\Ph_{\mathcal{H}',s_1}\left(K^{(i)}\right) = \Ph_{\mathcal{H}',s_1}\left(K'^{(i)}\right).\]
		The equality is trivial in the case of an infinite phenotype, as every orbit has cardinality $\infty$.
		Thus, the induction hypothesis delivers a collection of $\Sigma$ auxiliary $\mathcal{H}'$-preactions containing points $\omega_1^{(i)}$, $\omega_2^{(i)}$ whose $\langle a \rangle$-orbits $O_1^{(i)}$ and $O_2^{(i)}$ are disjoint and of cardinals $K^{(i)}$ and $K'^{(i)}$, respectively, and a word $W''$ in the generators of $G$ satisfying $\omega^{(i)}_1 \cdot W'' = \omega^{(i)}_2$. Applying Construction \hyperlink{ext}{A} to $x_i \cdot W$ and $\omega^{(i)}_1$ and to $y_i \cdot W'$ and $\omega^{(i)}_2$, respectively (and the edge $\overline{e}$), allows us to define the images of $x_i \cdot W$ and of $y_i \cdot W'$ under $t_e^{-1}$ and delivers an extension of the $\mathcal{H}$-preactions $\alpha_i$, $\beta_i$ satisfying $x_i \cdot W t_e^{-1}W'' = y_i \cdot W't_e^{-1}$. Hence the word $\mathfrak{M} = \mathfrak{m}Wt_e^{-1}W''t_eW'^{-1}\mathfrak{m}'^{-1}$ is suitable which proves the result in the case where $e$ is not a loop. \\
		\\
		\textbf{Case 2.A.b: Let us assume that $e$ is a loop.} 
		
		\textbf{Case 2.A.b.1: Let us assume that $|m|, |n| \neq 1$.} Applying the algorithm of the Case 1 of the base case leads to the existence of an extension of the $\mathcal{H}$-preactions and a word $W$ (\textit{resp.} $W'$) in $\left\langle a, t_e \right\rangle$ such that the $\langle a \rangle$-orbit of $x_i' \cdot W$ (\textit{resp.} $y_i' \cdot W'$) has cardinality \begin{align*}
			P^{(i)} &= \Ph_{m,n}\left(N'^{(i)}\right) \\
			&= \prod_{p \in \mathcal{P}, \left|N'^{(i)}\right|_p > |m|_p = |n|_p}p^{\left|N'^{(i)}\right|_p}
		\end{align*}
		(\textit{resp.} $P'^{(i)} = \Ph_{m,n}\left(M'^{(i)}\right)$) and such that $x_i' \cdot W, y_i' \cdot W' \notin \dom\left(t_e^{-1}\right)$. One has \[\Ph_{\mathcal{H}',s}\left(P^{(i)}\right) = \Ph_{\mathcal{H}',s}\left(P'^{(i)}\right)\]
		by the point \ref{pasfeuille} of Lemma \ref{subhgraph}. So by induction hypothesis, there exists a collection of $\Sigma$ auxiliary $\mathcal{H}'$-preactions containing points $\omega_1^{(i)}$, $\omega_2^{(i)}$ whose $\langle a \rangle$-orbits $O_1^{(i)}$ and $O_2^{(i)}$ are disjoint and of cardinals $P^{(i)}$ and $P'^{(i)}$, respectively, and a word $W''$ in the generators of $G$ such that $\omega^{(i)}_1 \cdot W'' = \omega^{(i)}_2$. As $x_i' \cdot W, y_i' \cdot W' \notin \dom\left(t_e^{-1}\right)$, and no element of $O_1$ or of $O_2$ belongs to $\dom(t_e)$, and $\frac{P^{(i)}}{P^{(i)} \wedge n} = \frac{P^{(i)}}{P^{(i)} \wedge m}$, we can apply Construction \hyperlink{ext}{A} to $x_i' \cdot W$ and $\omega^{(i)}_1$ and to $y_i' \cdot W'$ and $\omega^{(i)}_2$ (and the edge $\overline{e}$). We obtain extensions of the $\mathcal{H}$-preactions such that $x_i' \cdot W t_e^{-1} \cdot W'' = y_i' \cdot W' t_e^{-1}$, which leads to the conclusion with $\mathfrak{M} = \mathfrak{m}\kappa Wt_e^{-1}W''t_eW'^{-1}\kappa^{-1}\mathfrak{m}'^{-1}$. Hence we deduce the result in the case where $|m|, |n| \neq 1$. 
		
		\textbf{Case 2.A.b.2: Assume that $|m|=1$ or $|n|=1$}. Up to replacing $e$ by $\overline{e}$, one can assume without loss of generality that $|m|=1$. Let us define $N^{(i)}_1$ by 
		\[N^{(i)}_1 = \frac{N'^{(i)}}{N'^{(i)} \wedge n}.\]
		It follows that $\left|N^{(i)}_1\right|_p < \left|N'^{(i)}\right|_p$ for every prime number $p$ satisfying $0 < \left|N'^{(i)}\right|_p \leq |n|_p$ and every $i$ such that $\alpha_i$ has finite phenotype, and $\left|N^{(i)}_1\right|_p = 0$ if $\left|N'^{(i)}\right|_p = 0$. Then we apply Construction \hyperlink{ext'}{A'} to construct the image of $x_i'$ under $t_e$ and define the cardinality of its $\langle a \rangle$-orbit by $N^{(i)}_1$. We repeat the process and build the points $x'_i \cdot t_e^l$ and their $\langle a \rangle$-orbits of cardinals $N^{(i)}_l$ until \[N^{(i)}_l = \prod_{p \in \mathcal{P}, \left|N'^{(i)}\right|_p > |n|_p = 0}p^{\left|N'^{(i)}\right|_p} \] for every $i \in \llbracket 1, \Sigma \rrbracket$ such that $\alpha_i$ has a finite phenotype (hence $N_{l+1}^{(i)} = N_l^{(i)}$).
		Likewise, we create points $y_i' \cdot t_e^k$ in $Y_i$ whose $\langle a_s \rangle$-orbits have cardinals $M^{(i)}_k$ until  \[M^{(i)}_k = \prod_{p \in \mathcal{P}, \left|M'^{(i)}\right|_p > |n|_p = 0}p^{\left|M'^{(i)}\right|_p} \] for every $i \in \llbracket 1, \Sigma \rrbracket$ such that $\alpha_i$ has a finite phenotype. As $\Ph_{\mathcal{H}', s}\left(N^{(i)}_l\right) = \Ph_{\mathcal{H}', s}\left(M^{(i)}_k\right)$ by the point \ref{pasfeuille} of Lemma \ref{subhgraph} if $\alpha_i$ has finite phenotype (the equality being trivial if all orbits are infinite), the induction hypothesis delivers a collection of $\Sigma$ auxiliary transitive $\mathcal{H}'$-preactions containing points $\omega^{(i)}_1$, $\omega^{(i)}_2$ whose $\langle b \rangle$-orbits $O_1^{(i)}$ and $O_2^{(i)}$ are disjoint and of cardinals $N_l^{(i)}$ and $M_k^{(i)}$, respectively, and a word $W$ in the generators of $G$ such that $\omega^{(i)}_1 \cdot W = \omega^{(i)}_2$. Applying Construction \hyperlink{ext}{A} to $x'_i \cdot t_e^l$ and any point of $O_1$ and to $y_i' \cdot t_e^k$ and any point of $O_2$ (and to the edge $e$) gives rise to an extension of the actions such that $x_i' \cdot t_e^{l+1} W = y_i' \cdot t_e^{k+1}$. Hence the word $\mathfrak{M} = \mathfrak{m}\kappa t_e^{l+1}Wt_e^{-(k+1)}\kappa^{-1}\mathfrak{m}'^{-1}$ is suitable. The conclusion follows in this case. \\
		\\
		\textbf{Case 2.B: Let us assume that $e$ cannot be chosen so that $\mathcal{H}'$ satisfies the induction hypothesis} (\textit{i.e.} so that $\mathcal{H}'$ is not a loop with one label equal to $\pm 1$). This is equivalent to $\mathcal{H}$ consisting of two loops $e_0, f$ based at a single vertex $v$ and labeled $(k, \pm 1)$ (or $(\pm 1, k)$) and $(l, \pm 1)$ for some integers $k,l \in \mathbb{Z} \setminus \{0\}$, respectively. In that case, the $\mathcal{H}$-phenotype is defined by \[\bm{\Ph}_{\mathcal{H},v}(N) = \prod_{p \in \mathcal{P} \mid |k|_p = |l|_p = 0}p^{|N|_p}.\] Let $e \in \{e_0, \overline{e_0}\}$ be labeled $(k, \pm 1)$. Let us denote by $s$ the edge generator associated to $e$, by $t$ the edge generator associated to $f$ and by $b=a_v$ the vertex generator. As previously, we first build the image of $x_i$ (\textit{resp.} $y_i$) under the edge generator $t_{e_0}$ (which is either $t$ or $t^{-1}$), and we denote by $N'^{(i)}$ (\textit{resp.} $M'^{(i)}$) its cardinality. Then, we inductively build the image $x_i \cdot t_{e_0} t^m$ (\textit{resp.} $y_i \cdot t_{e_0} t^m$) and its $\langle b \rangle$-orbit of cardinality $N^{(i)}_m$ (\textit{resp.} $M^{(i)}_m$), where $N^{(i)}_0 = N'^{(i)}$ (\textit{resp.} $M_0^{(i)} = M'^{(i)}$) and
		
		\begin{equation*}
		N^{(i)}_{m+1} = \frac{N^{(i)}_m}{N^{(i)}_m \wedge l}
		\end{equation*}
		(and $\left(M^{(i)}_m\right)_m$ satisfies the same defining relation). We stop the process when $N^{(i)}_m$ and $M^{(i)}_m$ are coprime to $l$ for every $i$ such that $\alpha_i$ has finite phenotype. Following the same procedure, we build the images $x_i \cdot t_{e_0} t^m s^n$ (\textit{resp.} $y_i \cdot t_{e_0} t^ms^n$) and their $\langle b \rangle$-orbits in such a way that, for $n$ large enough, their cardinalities $C_i$ (\textit{resp.} $C_i'$) are coprime to $k$ and $l$ for every $i$ such that $\alpha_i$ has finite phenotype. This implies that $C_i=C_i'$, the common phenotype of $\alpha_i$ and $\beta_i$. Then, denoting by $\gamma = t_{e_0}t^ms^n$ and using Construction~\hyperlink{ext}{A}, we extend $\alpha_i$ and $\beta_i$ by setting \[x_i \cdot \gamma t = y_i \cdot \gamma\]
		(legit because $\frac{C_i}{C_i \wedge l} = \frac{C_i'}{C_i' \wedge 1} = C_i$). Hence the word $\mathfrak{M} = \gamma t \gamma'^{-1}$ allows us to conclude in this last case.

	\end{proof}

	\section{Perfect kernel}
	
	With the tools we introduced, we are now able to describe the perfect kernel of a non-amenable GBS group $\Gamma$, and to study the dynamics induced by the action by conjugation on it. 
	
	\subsection{Description of the perfect kernel}
	
	The aim of this section is to show that the perfect kernel of a GBS group $\Gamma$ consists exactly on the set of subgroups whose graph of groups (given by the action of $\Gamma$ on $\mathcal{A}$) is infinite.

	\begin{lemma}
		Let $\Gamma$ be a finitely generated group and let $N$ be a normal subgroup of $\Gamma$ that is Noetherian (\textit{i.e.} all of its subgroups are finitely generated). Let $\pi: \Gamma \to \Gamma/N$ be the projection. Then $\mathcal{K}(\Gamma) \subseteq \pi^{-1}\left(\Sub_{[\infty]}(\Gamma/N)\right)$, the set of subgroups of $\Gamma$ whose image under $\pi$ have infinite index in $\Gamma/N$.
	\end{lemma}
	
	\begin{proof}
		Let $\Lambda$ be a subgroup of $\Gamma$ such $\pi(\Lambda)$ has finite index in $\Gamma/N$. Let us show that $\Lambda \notin \mathcal{K}(\Gamma)$. \\ 
		\\
		Let $\mathcal{V} = \left\{\Lambda' \in \Sub(\Gamma), \pi(\Lambda) \subseteq \pi(\Lambda') \right\}$. For every $\Lambda' \in \mathcal{V}$, $\pi(\Lambda') = \Lambda'/(\Lambda' \cap N)$ has finite index in $\Gamma/N$, which is finitely generated as a quotient of a finitely generated group by a normal subgroup. Hence $\pi(\Lambda')$ is finitely generated. One has an exact sequence \[\xymatrix{
			1 \ar[r] & \Lambda' \cap N \ar[r]^{\iota} & \Lambda' \ar[r]^{\pi} & \pi(\Lambda') \ar[r] & 1
		}.\] By the assumption made on $N$, the group $\Lambda' \cap N$ is finitely generated as a subgroup of $N$. As $\Lambda'$ is generated by the generators of $\Lambda' \cap N$ and the preimages of the generators of $\pi(\Lambda')$ under $\pi$, we deduce that $\Lambda'$ is finitely generated. Hence, all elements of $\mathcal{V}$ are finitely generated so $\mathcal{V}$ is countable. \\
		\\
		The set $\mathcal{V}$ is a neighborhood of $\Lambda$: let $S = \left\{ \pi(t_1), ..., \pi(t_r) \right\}$ be a finite set generating $\pi(\Lambda)$. Then $\mathcal{V} = \cap_{i=1}^r\cup_{n \in N}\{\Lambda' \in \Sub(\Gamma), t_in \in \Lambda'\}$ is open. \\
		\\
		Hence $\mathcal{V}$ is a countable neighborhood of $\Lambda$ in $\Sub(\Gamma)$ so $\Lambda \notin \mathcal{K}(\Gamma)$. 
	\end{proof}
	
	One deduces the following result: 
	
	\begin{corollary}\label{unimod}
		If $\Gamma$ is a unimodular GBS group and $N$ an infinite cyclic normal subgroup of $\Gamma$, then, denoting by $\pi: \Gamma \to \Gamma/N$ the canonical surjection, one has \begin{align*}\mathcal{K}(\Gamma) &\subseteq \pi^{-1}\left(\Sub_{[\infty]}(\Gamma/N)\right)\\
		&= \{\Lambda \leq \Gamma \mid \Lambda \backslash \Gamma / \langle a \rangle \text{ \ is infinite} \} \end{align*} for every elliptic element $a$.
	\end{corollary}
	
	\begin{proof}
	   By the previous lemma, one only need to prove the last equality. As $N$ and $\langle a \rangle$ are commensurable, the set $\Lambda \backslash \Gamma / \langle a \rangle $ is finite if and only if the set $\Lambda \backslash \Gamma / N $ is finite. This precisely means that $\pi(\Lambda)$ has finite index in $\Gamma / N$.
	\end{proof}
	
	In the case of non-unimodular GBS groups, we have a more explicit characterization of subgroups whose graph of groups is infinite: 
	
	\begin{proposition}\label{nonunimod}
		Let $\Gamma$ be a non-unimodular GBS group and let $\Lambda$ be a subgroup of $\Gamma$. Then, the set $\Lambda \backslash \Gamma / \left\langle a \right\rangle$ is finite for some non-trivial elliptic element $a \in \Gamma$ if and only if $\Lambda$ has finite index in $\Gamma$. 
	\end{proposition}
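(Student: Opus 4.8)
The plan is to reduce the statement to counting vertices of a fixed type in the graph of groups $\Lambda\backslash\mathcal{A}$ and then to exploit non-unimodularity through a double-counting argument. First I would normalize $a$: since $a$ is a nontrivial power of a generator $g$ of the vertex stabilizer $\langle g\rangle=\Stab(\tilde v)$ of some lift $\tilde v$ of a vertex $v\in\mathcal{V}(\mathcal{H})$, and $[\langle g\rangle:\langle a\rangle]<\infty$, the set $\Lambda\backslash\Gamma/\langle a\rangle$ is finite if and only if $\Lambda\backslash\Gamma/\langle g\rangle$ is; so I may assume $\langle a\rangle=\Stab(\tilde v)$. Identifying the $v$-vertices of $\mathcal{A}$ with $\Gamma/\langle a\rangle$, the double cosets $\Lambda\backslash\Gamma/\langle a\rangle$ are exactly the $\Lambda$-orbits of $v$-vertices, that is, the vertices of type $v$ of $\Lambda\backslash\mathcal{A}$; the hypothesis says there are finitely many of them. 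Writing $N_V$ for the label of such a vertex $V$ (the index of the $\Lambda$-vertex group in the $\Gamma$-vertex group), decomposing $\Lambda\backslash\Gamma$ along the map $\Lambda\backslash\Gamma\to\Lambda\backslash\Gamma/\langle a\rangle$ into fibers of size $N_V$ gives
\[
[\Gamma:\Lambda]=\sum_{V\text{ of type }v}N_V .
\]
As the sum has finitely many terms, $[\Gamma:\Lambda]<\infty$ is equivalent to every $N_V$ being finite.

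Next I would argue by contradiction and suppose some type-$v$ vertex has $N_V=\infty$. The Transfer Equation of Definition \ref{ggraph} forces infinite labels to propagate: if an edge has source label $\infty$, then $\infty/|k_{e,\src}|=M/(M\wedge k_{e,\trg})$ forces the target label $M=\infty$, and symmetrically. Since $\Lambda\backslash\mathcal{A}$ is connected, every vertex then carries the label $\infty$, which means $\Lambda$ meets no conjugate of any vertex stabilizer nontrivially, i.e.\ $\Lambda$ contains no nontrivial elliptic element. Hence $\Lambda$ acts freely and without inversion on the tree $\mathcal{A}$, so the projection $q:\mathcal{A}\to\Lambda\backslash\mathcal{A}$ is a graph covering.

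Finally comes the step where non-unimodularity enters, and which I expect to be the crux. In $\mathcal{A}$ a vertex of type $s$ has exactly $|k_{e,\src}|$ incident edges of type $e$ for each $e\in\mathcal{E}(\mathcal{H})$ with $\src(e)=s$ (the cyclic stabilizer acts transitively on them, with point-stabilizer of index $|k_{e,\src}|$); since $q$ is a covering these valences persist in $\Lambda\backslash\mathcal{A}$. Double-counting the positive edges of type $e$ by their sources and by their targets yields, with $x_s$ the number of type-$s$ vertices,
\[
x_{\src(e)}\,|k_{e,\src}| = x_{\trg(e)}\,|k_{e,\trg}|\qquad\text{for every }e\in\mathcal{E}(\mathcal{H}).
\]
Here $x_v$ is finite by hypothesis and every $x_s$ is nonzero since the type map $\Lambda\backslash\mathcal{A}\to\mathcal{H}$ is onto; the relation at the edges incident to $v$ together with connectedness of $\mathcal{H}$ then makes all $x_s$ finite. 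Taking the product of these equalities around any cycle $e_1,\dots,e_r$ of $\mathcal{H}$, the factors $x_s$ telescope and cancel, leaving $\prod_i|k_{e_i,\src}|=\prod_i|k_{e_i,\trg}|$ for every cycle; this is exactly the unimodularity criterion recalled in Section \ref{GBS group}, contradicting the non-unimodularity of $\Gamma$. Therefore no type-$v$ vertex has infinite label, every $N_V$ is finite, and $[\Gamma:\Lambda]<\infty$. The delicate point is the passage from a single infinite label to the global obstruction: one must first upgrade to a free action so that the quotient is an honest covering with predictable valences, and only then does the Euler-characteristic-type balance around cycles convert non-unimodularity into a contradiction.
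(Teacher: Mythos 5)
Your proof is correct and follows essentially the same route as the paper's: the heart of both arguments is that if $\Lambda$ met every vertex stabilizer trivially, then $\mathcal{A}\to\Lambda\backslash\mathcal{A}$ would be a covering whose edge counts, telescoped around a cycle, force $\left|\prod k_{e,\src}\right|=\left|\prod k_{e,\trg}\right|$, contradicting non-unimodularity. The remaining bookkeeping (your sum $[\Gamma:\Lambda]=\sum_V N_V$ versus the paper's finite-fibered map $\Lambda\backslash\Gamma\to\Lambda\backslash\Gamma/\langle a\rangle$, and your propagation of finiteness and of the label $\infty$ via the edge relations and the Transfer Equation versus the paper's use of commensurability of elliptic elements) is equivalent.
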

	
	\begin{proof}
	If $\Lambda$ has finite index in $\Gamma$, then $\Lambda \backslash \Gamma / \langle a \rangle$ is finite.
	
		Conversely, let us assume that $\Lambda \backslash \Gamma / \langle a \rangle$ is finite. We denote by $\pi: \Lambda \backslash \mathcal{A} \to \Gamma \backslash \mathcal{A}$ the projection. First notice that the graph $\Lambda \backslash \mathcal{A}$ is finite: as the vertex set of $\mathcal{A}$ is indexed by $\cup_{\alpha \ \text{vertex generator}}\Gamma / \left\langle \alpha \right\rangle$, the vertex set of the quotient graph $\Lambda \backslash \mathcal{A}$ is indexed by $\cup_{\alpha \ \text{vertex generator}}\Lambda \backslash \Gamma / \left\langle \alpha \right\rangle$. As any two non-trivial elliptic elements of a GBS group are commensurable, $\Lambda \backslash \Gamma / \left\langle \alpha \right\rangle$ is finite for every elliptic element $\alpha$. Hence, as the number of vertex generators is also finite, the set of vertices of $\Lambda \backslash \mathcal{A}$ is finite. Since the degree of any vertex of $\mathcal{A}$ is finite (in particular this holds for the graph $\Lambda \backslash \mathcal{A}$), we deduce that $\Lambda \backslash \mathcal{A}$ is finite. 
		
		As $\Gamma$ is non-unimodular, we consider a cycle of edges $e_1,...,e_N$ (where $\trg(e_N) = \src(e_1)$) in $\mathcal{H}$ such that, denoting by $(k_i,l_i)$ the label of the edge $e_i$ one has \[\left|\prod_{i=1}^Nk_i \right| \neq \left|\prod_{i=1}^Nl_i \right|.\]  Assume by contradiction that $\Lambda \cap \left\langle a \right\rangle = \{1\}$ for some elliptic element $a$. By commensurability, this is equivalent to: $\Lambda \cap \left\langle \alpha \right\rangle = \{1\}$ for every elliptic element $\alpha$. In particular, for every $\pi$-preimage $\widetilde{\src(e_i)}$ of the vertex $\src(e_i) = \trg(e_{i-1})$ in $\Lambda \backslash \mathcal{A}$, the vertex $\widetilde{\src(e_i)}$ has exactly $|k_i|$ incident edges which project onto $e_i$ and $|l_i|$ incident edges which project onto $e_{i-1}$. For every $i \in \llbracket 1,N \rrbracket$, let us denote \begin{align*}s_i &= \left|\pi^{-1}\left(\src(e_i)\right)\right| \\ &= \left|\pi^{-1}\left(\trg(e_{i-1})\right)\right| \end{align*} and $a_i = \left|\pi^{-1}(e_i)\right|$. Then we have:  \begin{align*}
			a_i &= |k_i| s_i \\
			&= |l_i| s_{i+1}.
		\end{align*}
		
		Hence, for every $i \in \llbracket 1,N \rrbracket$: \[|k_i| s_i = |l_i| s_{i+1}\]
		and by induction, for every $r \in \llbracket 1,N \rrbracket$: 
		\begin{align*}
			s_1 \prod_{i=1}^N|k_i| &= s_{N+1} \prod_{i=1}^N|l_i| 
		\end{align*}
		so for $r=N$ and noticing that $s_{N+1} = s_1$: \[\left|\prod_{i=1}^Nk_i \right| = \left|\prod_{i=1}^Nl_i\right| \]
		which contradicts the hypothesis made on the cycle $(e_1,...,e_N)$. \\
		Thus there exists $k \in \mathbb{N}^*$ such that $\Lambda \cap \langle a \rangle = \langle a^k \rangle$. Let us consider the function $f : \begin{array}{ccccc}
			\Lambda \backslash\ \Gamma & \to & \Lambda \backslash\ \Gamma / \langle a \rangle \\
			\Lambda \gamma & \mapsto & \Lambda \gamma \langle a \rangle \\
		\end{array}.$
		By commensurability of the vertex stabilizers (\textit{cf.} Proposition \ref{ell}), for every $\gamma \in \Gamma$, there exists non-zero integers $N_{\gamma}$, $M_{\gamma}$ such that $\gamma a^{N_{\gamma}} \gamma^{-1} = a^{M_{\gamma}}$. Denoting by $n_{\gamma} := k N_{\gamma}$, we deduce: $\gamma a^{n_{\gamma}}\gamma^{-1} \in \Lambda$. Hence $f^{-1}(\left\{\Lambda \gamma \langle a \rangle \right\})$ contains at most $n_{\gamma}$ elements, which proves that all the fibers of $f$ are finite. As the set $\Lambda \backslash\ \Gamma / \langle a \rangle$ is finite by assumption and $f$ is surjective we deduce that $\Gamma / \Lambda$ is finite. 
	\end{proof}

	We are now able to understand the perfect kernel of a GBS group. The following theorem generalizes \cite[Theorem A]{solitar1}. 
	
	\begin{theorem}\label{kernel1}
		Let $\Gamma$ be a GBS group. Then:
		\begin{itemize}
			\item if $\Gamma \simeq \BS(1,n)$ for some $n \in \mathbb{Z} \setminus \{0\}$, one has $\mathcal{K}(\Gamma) = \emptyset$;
			\item if $\Gamma$ is unimodular and non-amenable, then, denoting by $N$ an infinite cyclic normal subgroup of $\Gamma$, one has $\mathcal{K}(\Gamma) = \pi^{-1}\left(\Sub_{[\infty]}(\Gamma / N)\right)$;
			\item otherwise \textit{i.e.} if $\Gamma$ is a non-unimodular and non-amenable GBS group, then $\mathcal{K}(\Gamma) = \Sub_{[\infty]}(\Gamma)$.
		\end{itemize}
	\end{theorem}
	
	\begin{proof}
		For every $n \in \mathbb{Z} \setminus \{0\}$, the group $\BS(1,n)$ has countably many subgroups (we refer to \cite[Corollary 8.4]{BS1N} for the proof of this statement). \\
		\\
		Let $\Gamma$ be a non-amenable GBS group. By Corollary \ref{unimod} and Proposition \ref{nonunimod}, one has
		
		\[
\{\Lambda \leq \Gamma \mid \Lambda \backslash \Gamma / \left\langle a \right\rangle \ \text{is infinite} \}  = \left\{
    \begin{array}{ll}
        \pi^{-1}(\Sub_{[\infty]}(\Gamma/N)) & \mbox{if } \Gamma  \text{ has a cyclic normal subgroup $N$;}\\
        \Sub_{[\infty]}(\Gamma) & \mbox{otherwise.}
    \end{array}
\right.
\]
		So by Corollary \ref{unimod}, it suffices to show that, given an elliptic element $a$ of $\Gamma$ \[\{\Lambda \leq \Gamma \mid \Lambda \backslash \Gamma / \left\langle a \right\rangle \ \text{is infinite} \} \subseteq \mathcal{K}(\Gamma).  \]
		Equivalently, using the correspondence between $\mathcal{H}$-graphs and graphs of subgroups of $\Gamma$ (\textit{cf.} Section \ref{corr}) and the fact that the degree of any vertex of $\mathcal{A}$ is finite, we want to show that \[\{\Lambda \leq \Gamma \mid \Lambda \text{ \ has an infinite $\mathcal{H}$-graph} \} \subseteq \mathcal{K}(\Gamma).\]
		Let $\Lambda \leq \Gamma$ having an infinite $\mathcal{H}$-graph $\mathcal{G}$. Let $\alpha$ be the associated transitive and saturated $\mathcal{H}$-preaction on a countable set $(X,x)$, fix a vertex $V \in \mathcal{V}\left(\mathcal{G}\right)$ deriving from $x$ and $R >0$ and let $\beta$ be a sub-$\mathcal{H}$-preaction of $\alpha$ whose $\mathcal{H}$-graph $K \subseteq \mathcal{G}$ is finite and contains the $R$-ball around $V$. The subgraph $K$ is a non-saturated $\mathcal{H}$-graph (otherwise, as the degree of any vertex of an $\mathcal{H}$-graph is finite, $\mathcal{G}$ would be finite). By Lemma \ref{completion1}, there exists a completion of $K$ into a saturated $\mathcal{H}$-graph $\mathcal{G}_1$ such that the quotient $\mathcal{G}_1 / K$ is a forest. So by Lemma \ref{retour}, there exists a subgroup $\Lambda_1$ whose $\mathcal{H}$-graph is $\mathcal{G}_1$. 
		
		We want to complete $K$ in another saturated $\mathcal{H}$-graph $\mathcal{G}_2$ which is non-isomorphic to $\mathcal{G}_1$. By Lemma \ref{arete} and Lemma \ref{boucle}, there exists a non-saturated $\mathcal{H}$-preaction whose $\mathcal{H}$-graph $L$ is non-saturated and non-simply connected and has the same $\mathcal{H}$-phenotype as $K$. Using Lemma \ref{completion1} and Lemma \ref{liaison} (with $\Sigma=1$), we deduce an $\mathcal{H}$-graph $\mathcal{G}_2$ which contains $K$ and $L$ as disjoint subgraphs such that the quotient of $\mathcal{G}_2 / (K \sqcup L)$ is a forest. Hence by Lemma \ref{retour}, there exists a group $\Lambda_2$ whose $\mathcal{H}$-graph is $\mathcal{G}_2$.
		
		The graphs $\mathcal{G}_1$ and $\mathcal{G}_2$ are infinite and the groups $\Lambda_1$ and $\Lambda_2$ are different (using the correspondence between $\mathcal{H}$-graphs and graphs of subgroups of $\Gamma$, because the rank of the fundamental group of $\mathcal{G}_2$ is strictly larger than the one of $\mathcal{G}_1$). Moreover, the $R$-ball around the origin in the Schreier graph (with respect to the vertex and edge generators) of the $\Gamma$ right action corresponding to $\Lambda_i$ coincides with the one corresponding to $\Lambda$ (\textit{cf.} Section \ref{schreier}) for $i = 1,2$. Thus we approximated $\Lambda$ by a subgroup whose $\mathcal{H}$-graph is infinite, which proves that the closure of the set $\{\Lambda \leq \Gamma \mid \Lambda \text{ \ has an infinite $\mathcal{H}$-graph} \}$ is perfect. The conclusion follows in the case where $\Gamma$ is not isomorphic to $\BS(1,n)$ for some $n \in \mathbb{Z} \setminus \{0\}$. 
	\end{proof}

	\subsection{Dynamics on the perfect kernel}
	
	In Theorem \ref{kernel1}, we decomposed the perfect kernel into a countable disjoint union which is preserved by the action by conjugation. A natural question is to describe more precisely the topology of those pieces. The following result gives an answer:
	
	\begin{proposition}\label{topologiepieces}
		For any vertex $s \in \mathcal{V}\left(\mathcal{H}\right)$, \begin{itemize} 
			\item $\bm{\Ph}_{\mathcal{H},s}^{-1}(\infty)$ is closed and non-empty;
			\item for any $N \in \mathcal{Q}_{\mathcal{H}, s}$, the piece $\bm{\Ph}_{\mathcal{H},s}^{-1}(N)$ is open. 
		\end{itemize}
		Moreover, for any $N \in \mathcal{Q}_{\mathcal{H}, s}$, the piece $\bm{\Ph}_{\mathcal{H},s}^{-1}(N)$ is closed if and only if $\Gamma$ is unimodular.
	\end{proposition}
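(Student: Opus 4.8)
The plan is to observe that $\bm{\Ph}_{\mathcal{H},s}$ factors as $\Lambda \mapsto \Lambda \cap \langle a_s\rangle \mapsto M := [\langle a_s\rangle : \Lambda \cap \langle a_s\rangle] \mapsto \Ph_{\mathcal{H},s}(M)$, where the first map $\Sub(\Gamma) \to \Sub(\langle a_s\rangle)$ is continuous (for fixed $g \in \langle a_s\rangle$, membership $g \in \Lambda\cap\langle a_s\rangle$ depends only on $g\in\Lambda$) and $\Sub(\langle a_s\rangle)$ is identified with $\mathbb{N}^* \cup \{\infty\}$ via $\langle a_s^M\rangle \leftrightarrow M$ and $\{1\}\leftrightarrow\infty$. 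In this identification every finite $M$ is isolated, since $\{\Lambda : \Lambda\cap\langle a_s\rangle = \langle a_s^M\rangle\} = \mathcal{V}(\{a_s,\dots,a_s^{M-1}\},\{a_s^M\})$ is clopen in $\Sub(\Gamma)$, whereas $\{1\}$ (the value $\infty$) is the limit of $\langle a_s^M\rangle$ as $M\to\infty$. The whole statement then becomes a matter of translating the topology of $\Sub(\Gamma)$ into the arithmetic of $\Ph_{\mathcal{H},s}$.

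First I would handle the infinite piece. As $\Ph_{\mathcal{H},s}(M)$ is finite for every finite $M$ and equals $\infty$ only for $M=\infty$, one has $\bm{\Ph}_{\mathcal{H},s}^{-1}(\infty) = \{\Lambda : \Lambda\cap\langle a_s\rangle = \{1\}\} = \bigcap_{k\geq 1}\mathcal{V}(\{a_s^k\},\emptyset)$, an intersection of clopen sets, hence closed; it is non-empty, containing the trivial subgroup. For $N \in \mathcal{Q}_{\mathcal{H},s}$ I would set $F_N := \Ph_{\mathcal{H},s}^{-1}(N)\cap\mathbb{N}^*$ and note $\bm{\Ph}_{\mathcal{H},s}^{-1}(N) = \bigsqcup_{M\in F_N}\{\Lambda : \Lambda\cap\langle a_s\rangle=\langle a_s^M\rangle\}$, a union of clopen sets, hence open irrespective of whether $F_N$ is finite.

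The dichotomy "closed iff unimodular" then reduces to whether $F_N$ is finite. Here I would use that, for $N\in\mathcal{Q}_{\mathcal{H},s}$, the equation $\Ph_{\mathcal{H},s}(M)=N$ forces $|M|_p=|N|_p$ for every $p\mid N$ and $p\notin\mathcal{P}_{\mathcal{H},s}(M)$ for every $p\nmid N$. If $\Gamma$ is unimodular, every cycle is balanced, so the cycle condition in Definition \ref{defphenotype} is vacuous and $p\in\mathcal{P}_{\mathcal{H},s}(M)$ iff $|M|_p > D_p$, where $D_p$ is the (finite, by Remark \ref{simplification}) maximum of $\sum_{i=1}^r|k_i|_p - \sum_{i=1}^{r-1}|l_i|_p$ over simple edge paths based at $s$; crucially $D_p=0$ for all but the finitely many primes dividing the labels. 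Thus for $p\nmid N$ the constraint is $|M|_p\leq D_p$, which confines $M$ to a finite set, so $F_N$ is finite and $\bm{\Ph}_{\mathcal{H},s}^{-1}(N)$ is a finite union of clopen sets, hence clopen, in particular closed.

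Conversely, if $\Gamma$ is non-unimodular there is a cycle $c$ with $|\prod_{e\in c}k_{e,\src}|\neq|\prod_{e\in c}k_{e,\trg}|$, hence a prime $q$ with $\sum_{e\in c}|k_{e,\src}|_q\neq\sum_{e\in c}|k_{e,\trg}|_q$; this cycle violates the balancedness requirement for $q$, so $q\notin\mathcal{P}_{\mathcal{H},s}(M)$ for every $M$, and in particular $q\nmid N$. Then $\Ph_{\mathcal{H},s}(Nq^j)=N$ for all $j$ (the factor $q^j$ never contributes and no other prime is affected), so $F_N$ is infinite. To conclude non-closedness I would simply take $\Lambda_j := \langle a_s^{Nq^j}\rangle$: each lies in $\bm{\Ph}_{\mathcal{H},s}^{-1}(N)$ since its $\langle a_s\rangle$-index is $Nq^j$, yet $\langle a_s^{Nq^j}\rangle \to \{1\}$ in $\Sub(\Gamma)$ as $j\to\infty$ (every fixed non-trivial element eventually leaves these subgroups), and $\{1\}\in\bm{\Ph}_{\mathcal{H},s}^{-1}(\infty)$; hence $\bm{\Ph}_{\mathcal{H},s}^{-1}(N)$ is not closed. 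The only genuinely technical point, and the step I would treat most carefully, is the finiteness of $F_N$ in the unimodular case, i.e. the verification that balancedness makes the valuations $|M|_p$ for $p\nmid N$ bounded; everything else is a direct translation between the topology of $\Sub(\Gamma)$ and the arithmetic of $\Ph_{\mathcal{H},s}$, needing none of the heavier $\mathcal{H}$-graph machinery.
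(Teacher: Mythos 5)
Your proof is correct. For the closedness and non-emptiness of $\bm{\Ph}_{\mathcal{H},s}^{-1}(\infty)$, the openness of the finite pieces, and the implication ``unimodular $\Rightarrow$ closed'' (via finiteness of the fiber $\Ph_{\mathcal{H},s}^{-1}(N)\cap\mathbb{N}$, with your $D_p$ playing exactly the role of the paper's $a_p$), your argument is essentially the one in the paper. Where you genuinely diverge is the converse direction: to show that a non-empty finite piece fails to be closed when $\Gamma$ is not unimodular, the paper constructs, via Lemmas \ref{liaison}, \ref{completion} and \ref{retour}, a single subgroup $\Lambda$ of infinite index whose $\mathcal{H}$-graph contains vertices labeled $(s,Np_0^n)$ for all $n$, and then extracts a convergent subsequence of conjugates $g_n\Lambda g_n^{-1}$ accumulating on $\bm{\Ph}_{\mathcal{H},s}^{-1}(\infty)$; you instead take the explicit cyclic subgroups $\langle a_s^{Nq^j}\rangle$, which lie in $\bm{\Ph}_{\mathcal{H},s}^{-1}(N)$ because $\Ph_{\mathcal{H},s}(Nq^j)=N$ (the unbalanced prime $q$ never enters $\mathcal{P}_{\mathcal{H},s}$ and is coprime to $N$) and which visibly converge to the trivial subgroup. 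Both arguments are valid for the statement as written, which concerns closedness in $\Sub(\Gamma)$; yours is considerably more elementary, needing none of the $\mathcal{H}$-graph machinery. What the paper's heavier construction buys is that its witnesses live in the perfect kernel and form a single conjugation orbit, which is the form of the statement actually needed for the dynamical results on $\mathcal{K}(\Gamma)$ elsewhere in the paper; your cyclic subgroups $\langle a_s^{Nq^j}\rangle$ are isolated points of $\Sub(\Gamma)$ (being of ``finite'' graph-of-groups type only when... in fact they are infinite index but still may lie outside $\mathcal{K}(\Gamma)$ only in degenerate cases --- at any rate they are not a priori in the perfect kernel, nor conjugates of one another), so your argument establishes the topological claim but not its refinement to $\mathcal{K}(\Gamma)$.
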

	
	\begin{proof}
		One has \[\bm{\Ph}_{\mathcal{H},s}^{-1}(\infty) = \{\Lambda \in \Sub(\Gamma), a_s^i \notin \Lambda \ \forall i \in \mathbb{N}^* \}\]
		which is closed. As it contains the trivial group, it is non-empty.
		
		Let $N \in \mathcal{Q}_{\mathcal{H}, s}$.
		One has \[\bm{\Ph}_{\mathcal{H},s}^{-1}(N) = \bigsqcup_{M \in \Ph_{\mathcal{H},s}^{-1}(N)} \left\{\Lambda \in \Sub(\Gamma) \mid a_s^M \in \Lambda, a_s^i \notin \Lambda \ \forall i \in \llbracket 1,M-1 \rrbracket \right\}.\]
		Hence, $\bm{\Ph}_{\mathcal{H},s}^{-1}(N)$ is open as a union of basic clopen sets. \\
		\\
		For now assume that $\Gamma$ is unimodular and let us show that $\Ph_{\mathcal{H},s}^{-1}(N)$ is a finite subset of $\mathbb{N}$. For every prime number $p$, let us denote \[a_p := \max \left\{\sum_{j=1}^{i+1}|k_j|_p - \sum_{j=1}^i|l_j|_p \vert (k_1,l_1),...,(k_i,l_i) \text{ \ labels of an edge path based at $s$} \right\}. \]
		Notice that $a_p = 0$ for all but finitely many $p \in \mathcal{P}$. By unimodularity, one has, for every $M \in \Ph_{\mathcal{H},s}^{-1}(N)$: \[N = \prod_{p \in \mathcal{P}, |M|_p > a_p}p^{|M|_p}.\]
		Let $M \in \Ph_{\mathcal{H},s}^{-1}(N)$. If $p$ divides $M$, then \begin{itemize}
			\item either $|M|_p \leq a_p$, which implies that $a_p \neq 0$;
			\item or $|M|_p = |N|_p$, which implies that $|N|_p \neq 0$.
		\end{itemize}
		In other words, $p$ belongs to the set $\{q \in \mathcal{P} | a_q \neq 0 \} \cup \{q \in \mathcal{P} | |N|_q \neq 0\}$, which is finite. Moreover, as for every prime number $p$, one has $|M|_p \leq \max(a_p,|N|_p)$, the set $\Ph_{\mathcal{H},s}^{-1}(N)$ is a finite set of integers, hence $\bm{\Ph}_{\mathcal{H},s}^{-1}(N)$ is closed in $\Sub(\Gamma)$ as a finite union of clopen sets. 
		
		Now assume that the group $\Gamma$ is non-unimodular and let us show that the closure of a non-empty piece $\bm{\Ph}_{\mathcal{H},s}^{-1}(N)$ intersects $\bm{\Ph}_{\mathcal{H},s}^{-1}(\infty)$ non-trivially. Denoting by $b$ the generator associated to the vertex $s$, we first build a subgroup $\Lambda$ such that the set of indices $\left\{\left[\langle b \rangle : g\Lambda g^{-1}\cap \langle b \rangle \right], g \in \Gamma\right\}$ is unbounded. First, let us show that the subset of integers $\Ph_{\mathcal{H},s}^{-1}(N)$ is unbounded. By non-unimodularity, there exists a prime number $p_0$ and a cycle in $\mathcal{H}$ labeled $(k_1,l_1),...,(k_r,l_r)$ such that \[\sum_{i=1}^r |k_i|_{p_0} \neq \sum_{i=1}^r |l_i|_{p_0}.\] In particular, $p_0 \notin \mathcal{P}_{\mathcal{H},s}(N)$ and hence \[\Ph_{\mathcal{H},s}\left(Np_0^n\right) = \Ph_{\mathcal{H},s}(N)\] for every $n \in \mathbb{N}$. By induction on $n$ using Lemma \ref{liaison} (for $\Sigma = 1$), we build a sequence of non-saturated $\mathcal{H}$-graphs $(\mathcal{G}_n)_{n \in \mathbb{N}}$ such that, for every $n \in \mathbb{N}$, the $\mathcal{H}$-graph $\mathcal{G}_n$ is a tree that contains $n+1$ vertices labeled $\left(s, Np_0^k\right)$ (for $k \in \llbracket 0, n \rrbracket$), and $\mathcal{G}_n \subseteq \mathcal{G}_{n+1}$. Thus the $\mathcal{H}$-graph $\mathcal{G} := \cup_{n \in \mathbb{N}}\mathcal{G}_n$ is a tree that contains a vertex labeled $(s, Np_0^n)$ for every $n \in \mathbb{N}$. We saturate $\mathcal{G}$ into an $\mathcal{H}$-graph $\hat{\mathcal{G}}$ using Lemma \ref{completion1} so that the quotient $\hat{\mathcal{G}} / \mathcal{G}$ is a tree. Hence, by Lemma \ref{retour}, there exists a subgroup $\Lambda \in \mathcal{K}(\Gamma)$ whose $\mathcal{H}$-graph is $\hat{\mathcal{G}}$. In particular, there exist elements $g_n \in \Gamma$ such that \[g_n\Lambda g_n^{-1} \cap \langle b \rangle = \left\langle b^{Np_0^n} \right\rangle.\] By compacity of $\mathcal{K}\left(\Gamma\right)$, up to passing to a subsequence, $(g_n\Lambda g_n^{-1})_{n \in \mathbb{N}}$ converges to a subgroup $\Lambda_0 \in \mathcal{K}(\Gamma)$. If $b^k \in \Lambda_0$ for some $k \in \mathbb{N}$, then $b^k \in g_n \Lambda g_n^{-1} \cap \langle b \rangle$ for $n$ large enough, \textit{i.e.} $Np_0^n$ divides $k$ for infinitely many integers $n$. Hence, one has necessarily $k=0$ and $\Lambda_0 \in \bm{\Ph}_{\mathcal{G}_{\Gamma,s}}^{-1}(\infty)$.
	\end{proof}
	
	In fact, the decomposition we obtained is ``minimal'', in the sense that any two subgroups sharing the same phenotype have arbitrarily small neighborhoods that have a non-trivial intersection, up to conjugating them by some elements of $\Gamma$. We have a stronger statement:
	
	\begin{theorem}\label{toptrans}
		Let $\Gamma$ be a non-amenable GBS group defined by a reduced graph of groups $\mathcal{H}$. Let us fix a vertex $s$ of $\mathcal{H}$. For every $N \in \mathcal{Q}_{\mathcal{H}, s} \cup \{\infty\}$, the action by conjugation of $\Gamma$ on $\mathcal{K}(\Gamma) \cap \bm{\Ph}_{\mathcal{H},s}^{-1}(N)$ is highly topologically transitive. 
	\end{theorem}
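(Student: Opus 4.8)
The plan is to unwind the definition of high topological transitivity and reduce everything to a single application of Lemma~\ref{liaison} with $\Sigma = r$. Fix $r\in\mathbb{N}$ and nonempty open sets $U_1,\dots,U_r,V_1,\dots,V_r$ of $\mathcal{K}(\Gamma)\cap\bm{\Ph}_{\mathcal{H},s}^{-1}(N)$. Using the Schreier-graph neighborhood basis $V_R$ recalled in Section~\ref{schreier}, for each $i$ I would pick $\Lambda_i\in U_i$ and $\Lambda_i'\in V_i$ (both in the perfect kernel and of $\mathcal{H}$-phenotype $N$) together with a common radius $R$ such that every subgroup of the ambient space whose Schreier $R$-ball around the base point matches that of $\Lambda_i$ (resp. $\Lambda_i'$) lies in $U_i$ (resp. $V_i$). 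Representing $\Lambda_i,\Lambda_i'$ by transitive saturated $\mathcal{H}$-actions, I would then cut out the finite sub-$\mathcal{H}$-graphs $B_i,B_i'$ equal to the balls of a large radius $R'\gg R$ around the base vertices $x_{i,0},y_{i,0}$ (all of the same type, the distinguished vertex, by the correspondence of Section~\ref{corr}), chosen so that the Schreier $R$-ball is determined by the $\mathcal{H}$-ball of radius $R'$ with room to spare. By Proposition~\ref{stab} each of these finite pieces still has phenotype $N$, and by Lemma~\ref{arete} they are realized by $\mathcal{H}$-preactions.

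The crux is to set up the data so that Lemma~\ref{liaison} applies to all $r$ pairs at once, that is, with a \emph{single} edge $e_0$ and a single pair of words $\mathfrak{m},\mathfrak{m}'$. To that end I would fix one long word $\mathfrak{m}$ whose image in $\mathcal{H}$ is a reduced path of combinatorial length exceeding $R'$ (for instance a large power of a hyperbolic element), and feed its letters one at a time into Constructions~A$'$ and~B$'$ starting from $x_{i,0}$: whenever the next generator is already defined we follow it, and otherwise we extend. Since $\mathfrak{m}$ is too long to remain inside the finite ball $B_i$, the walk necessarily exits it and its final letters build fresh structure, so $x_{i,0}\cdot\mathfrak{m}$ becomes a well-defined vertex of a fixed type lying beyond radius $R'$ and non-saturated relative to a fixed edge $e_0$; crucially all of this happens far from $x_{i,0}$, so the Schreier $R$-ball is untouched. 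Performing the same with a word $\mathfrak{m}'$ on the $B_i'$ yields preactions $\alpha_i\supseteq B_i$ and $\beta_i\supseteq B_i'$ satisfying, uniformly in $i$, the hypotheses of Lemma~\ref{liaison} (they still share phenotype $N$). Applying that lemma with $\Sigma=r$ produces a single word $\mathfrak{M}$ and, for each $i$, a finite connected $\mathcal{H}$-preaction $\mu_i$ of phenotype $N$ that extends both $\alpha_i$ and $\beta_i$, contains $B_i$ and $B_i'$ as disjoint sub-$\mathcal{H}$-graphs with tree quotient, and satisfies $x_{i,0}\cdot\mathfrak{M}=y_{i,0}$.

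It then remains to turn each $\mu_i$ into a genuine action landing in the perfect kernel. I would apply Lemma~\ref{completion} to the (finite, connected, non-saturated) $\mathcal{H}$-graph of $\mu_i$ to obtain an infinite saturated $\mathcal{H}$-graph $\hat{\mathcal{G}}_i$ whose quotient by the graph of $\mu_i$ is an infinite tree, and then Lemma~\ref{retour} to realize $\hat{\mathcal{G}}_i$ as a transitive saturated $\mathcal{H}$-action extending $\mu_i$; let $\Theta_i$ be the associated subgroup, pointed at $x_{i,0}$. Because all modifications occurred at $\mathcal{H}$-radius at least $R'$, the Schreier $R$-balls of $\Theta_i$ around $x_{i,0}$ and around $y_{i,0}=x_{i,0}\cdot\mathfrak{M}$ coincide with those of $\Lambda_i$ and $\Lambda_i'$; since moreover $\hat{\mathcal{G}}_i$ is infinite, $\Theta_i\in\mathcal{K}(\Gamma)$ by the characterization established in Theorem~\ref{kernel}, and it has phenotype $N$. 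Thus $\Theta_i\in U_i$ while $\Stab(x_{i,0}\cdot\mathfrak{M})=\mathfrak{M}^{-1}\Theta_i\mathfrak{M}\in V_i$. Setting $\gamma=\mathfrak{M}^{-1}$, which is common to all $i$, we obtain $\gamma\Theta_i\gamma^{-1}\in\gamma U_i\cap V_i$ for every $i\in\llbracket 1,r\rrbracket$, proving $r$-topological transitivity; as $r$ is arbitrary the action is highly topologically transitive. The case $N=\infty$ is identical, all the orbits involved being infinite, a situation that Lemma~\ref{liaison} explicitly allows.

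The main obstacle, I expect, is exactly the uniformization carried out in the second paragraph: arranging one edge $e_0$ and one pair of words $\mathfrak{m},\mathfrak{m}'$ that simultaneously expose a non-saturated attachment site in every one of the $r$ distinct pieces, while guaranteeing that neither this preparatory step, nor the merging of Lemma~\ref{liaison}, nor the saturation of Lemma~\ref{completion} perturbs any of the $2r$ controlled Schreier balls. The heavy lifting behind the single common word $\mathfrak{M}$ is of course Lemma~\ref{liaison} itself, whose dévissage induction is where the arithmetic of the phenotype is actually consumed; here it is invoked as a black box.
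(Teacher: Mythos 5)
Your overall strategy coincides with the paper's: reduce $r$-topological transitivity to a single application of Lemma~\ref{liaison} with $\Sigma = r$, after first replacing each subgroup by a finite ball, completing and saturating via Lemmas~\ref{completion} and~\ref{retour}, and finally reading off the conjugator from the common word $\mathfrak{M}$. The last two paragraphs of your argument (merging, re-saturating, checking membership in the perfect kernel and in the phenotype piece, and extracting $\gamma = \mathfrak{M}^{-1}$) match the paper's proof and are fine.

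The genuine gap is exactly the step you flag as "the main obstacle": producing \emph{one} word $\mathfrak{m}$ that simultaneously carries all $r$ base points out of their controlled balls. Your proposed mechanism --- take a reduced word of combinatorial length exceeding $R'$, e.g.\ a large power of a hyperbolic element, and argue that "the walk necessarily exits" the finite ball --- is false. A reduced word of arbitrary length can trace a \emph{non-reduced} (indeed periodic) path in the $\mathcal{H}$-graph or Schreier graph of a subgroup: the paper points this out explicitly after Definition~\ref{graph} (a reduced word acting on a one-point set stays put), and more concretely, if some power $\mathfrak{m}^k$ lies in $\Lambda_i$ then $x_{i,0}\cdot\mathfrak{m}^{k}=x_{i,0}$, so no power of $\mathfrak{m}$ ever leaves the $R'$-ball of that particular $\Lambda_i$. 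Worse, a word that escapes $B_i$ may fail to escape $B_j$ for $j\neq i$, so one must build the common exit word by an induction over the $2r$ graphs, appending letters while guaranteeing that the exits already achieved are not undone when the word is prolonged. This is precisely the content of Lemmas~\ref{unesortie}, \ref{backtrack} and~\ref{sortie} in the paper: one works inside the completed graph $\mathcal{F}^{(i)}$, uses the divisibility condition built into Lemma~\ref{completion} to ensure (Lemma~\ref{backtrack}) that once the path has entered the tree part $\mathcal{F}^{(i)}\setminus K_i$ every further reduced word yields a reduced edge path and hence stays outside $K_i$, and handles separately the case where reduced paths from the current edge form trees versus contain cycles. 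None of this is reproduced or replaced in your proposal, so as written the construction of $\mathfrak{m}$, $\mathfrak{m}'$ and of the common non-saturated attachment site for Lemma~\ref{liaison} does not go through.
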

	
	To prove Theorem \ref{toptrans}, we need to understand how to read words on the level of $\mathcal{H}$-graphs, the main subtlety coming from the fact that $\Gamma$ does not necessarily act on a given $\mathcal{H}$-graph. Let $\alpha$ be a $\Gamma$-action on a countable set $X$ and $\mathcal{G}$ be its $\mathcal{H}$-graph. Observe that the data of an element $x \in X$ and a word $(c, \mu) = \left((e_1, ..., e_r), \left(a_{\src(e_1)}^{k_1}, ..., a_{\src(e_r)}^{k_r}, a_{\trg(e_r)}^{k_{r+1}}\right)\right)$ defines an edge path $E_1, ..., E_r$ of $\mathcal{G}$: for every $i \in \llbracket 1, r \rrbracket$, the edge $E_i$ is the edge of type $e_i$ deriving from $x \cdot \big|(e_1, ..., e_{i-1}),$ $\left(a_{\src(e_1)}^{k_1}, ..., a_{\src(e_{i-1})}^{k_{i-1}}, a_{\src(e_i)}^{k_{i}}\right)\big|$ (\textit{cf.} Section \ref{GBS group}). If the edge path $E_1, ..., E_r$ is reduced, then $(c, \mu)$ is a reduced word, but the converse is not necessarily true: for instance, if $\alpha$ is the trivial action of $\Gamma$ on a set $X$ reduced to one point, its $\mathcal{H}$-graph is itself, hence finite. As there exist reduced words of arbitrarily large length, some of them lead to non-reduced paths in $\mathcal{H}$. But a partial converse relying on the labels of the vertices of $\mathcal{G}$ is given in the following lemma:
	
	\begin{lemma}\label{backtrack}
		Let $\alpha$ be a saturated and transitive $\mathcal{H}$-preaction on a pointed countable set $(X,x)$ such that there exists an edge $E$ labeled $e$ in its $\mathcal{H}$-graph $\mathcal{G}$ deriving from an element $x \in X$ satisfying the following conditions: \begin{itemize}
			\item every reduced edge path of $\mathcal{G}$ beginning with $E$ is a tree;
			\item for every reduced edge path $E_1=E,...,E_r$ labeled $e_1, ..., e_r$, denoting by $(\src(e_i), N_i)$ the label of $\src(E_i)$, the integer $N_{i+1}$ is divisible by $k_{e_i, \trg}$.
		\end{itemize}
		Then, for every reduced word $\left(c, \mu\right)$ of type $e_1=e, e_2, ..., e_r$ (\textit{cf.} Section \ref{GBS group}), the edge path of $\mathcal{G}$ defined by $x$ and $\left(c, \mu\right)$ is reduced. In particular, this edge path lies in the half graph of $E$.
	\end{lemma}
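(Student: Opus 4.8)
The plan is to reduce the entire statement to a single local computation at a "first backtracking", comparing the edge-path reduction condition $E_{i+1}\neq\overline{E_i}$ with the arithmetic reduction condition on the word, and to argue by contradiction.

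First I would fix notation for the induced path. For $1\le i\le r$ set $w_i=\left|(e_1,\dots,e_{i-1}),(a_{\src(e_1)}^{k_1},\dots,a_{\src(e_i)}^{k_i})\right|$, so that $E_i$ is the edge of type $e_i$ deriving from $x\cdot w_i$, with source $(xw_i)\langle a_{\src(e_i)}\rangle$ and target $(xw_i t_{e_i})\langle a_{\trg(e_i)}\rangle$ (here $t_{e_i}=1$ when $e_i\in T$), and record the recursion $w_{i+1}=w_i\,t_{e_i}\,a_{\trg(e_i)}^{k_{i+1}}$; since $\alpha$ is saturated we may work in a genuine $\Gamma$-action, so every generator is globally defined and all these expressions make sense. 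The first observation is that $E_{i+1}=\overline{E_i}$ forces $e_{i+1}=\overline{e_i}$, because by the remark preceding Proposition \ref{arith} the orientation of $\mathcal G$, hence the type of $\overline{E_i}$, is determined by the labels. Consequently backtracking can occur only at an index $i$ where $e_{i+1}=\overline{e_i}$, which is exactly the kind of index constrained by the word-reduction condition.

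The heart of the proof is the computation at such an index. Suppose $E_1,\dots,E_i$ is reduced (no backtracking before $i$) and $e_{i+1}=\overline{e_i}$; put $q=xw_it_{e_i}$ and $\alpha=a_{\trg(e_i)}$. Then $\overline{E_i}=q\langle\alpha^{k_{e_i,\trg}}\rangle$, while $E_{i+1}=(q\alpha^{k_{i+1}})\langle\alpha^{k_{e_i,\trg}}\rangle$, using $k_{\overline{e_i},\src}=k_{e_i,\trg}$ and $\src(e_{i+1})=\trg(e_i)$; both have source $q\langle\alpha\rangle$. Hence $E_{i+1}=\overline{E_i}$ if and only if $q\alpha^{k_{i+1}}\in q\langle\alpha^{k_{e_i,\trg}}\rangle$. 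By orbit–stabilizer the stabilizer of $q$ in $\langle\alpha\rangle$ is $\langle\alpha^{N_{i+1}}\rangle$, where $N_{i+1}$ is the label of $\trg(E_i)$, so this holds if and only if $k_{i+1}\in k_{e_i,\trg}\mathbb Z+N_{i+1}\mathbb Z=(k_{e_i,\trg}\wedge N_{i+1})\mathbb Z$. Now I invoke the second hypothesis applied to the reduced path $E_1,\dots,E_i$ (which begins with $E$): it gives $k_{e_i,\trg}\mid N_{i+1}$, whence $k_{e_i,\trg}\wedge N_{i+1}=|k_{e_i,\trg}|$ and the backtracking condition collapses to $k_{e_i,\trg}\mid k_{i+1}$; the case $N_{i+1}=\infty$ is identical via the convention $\infty\wedge n=|n|$. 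Since $(c,\mu)$ is a reduced word, at this index $k_{i+1}$ is \emph{not} divisible by $k_{e_i,\trg}$, so $E_{i+1}\neq\overline{E_i}$, contradicting the choice of $i$. Thus there is no first backtracking and the whole edge path is reduced.

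For the final clause I would use the first hypothesis: the path $E_1,\dots,E_r$, now known to be reduced and to begin with $E$, is one of the reduced edge paths beginning with $E$ and is therefore a tree; in particular it traverses neither $E$ nor $\overline E$ a second time after its initial edge, so $E_2,\dots,E_r$ lie in $\mathcal G\setminus\{E,\overline E\}$ and stay connected to $\trg(E)$, i.e. the path lies in the half graph of $E$. The step I expect to be the main obstacle is not any deep idea but the bookkeeping: keeping straight from which element of $X$ each $E_i$ derives (and the identification of the first edge with $E$), and getting the greatest-common-divisor identity exactly right, since it is precisely the orbit-size divisibility $k_{e_i,\trg}\mid N_{i+1}$ that converts the geometric non-backtracking condition into the arithmetic non-divisibility condition defining a reduced word.
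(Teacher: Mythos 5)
Your proof is correct and follows essentially the same route as the paper: assume a first backtracking $E_{i+1}=\overline{E_i}$, note this forces $e_{i+1}=\overline{e_i}$, translate the coincidence of edges into the divisibility $k_{i+1}\in\left(k_{e_i,\trg}\wedge N_{i+1}\right)\mathbb{Z}$, and use the hypothesis $k_{e_i,\trg}\mid N_{i+1}$ to conclude $k_{e_i,\trg}\mid k_{i+1}$, contradicting the reducedness of the word $(c,\mu)$. Your gcd bookkeeping at that step is in fact slightly more careful than the paper's, which abbreviates it to ``$N_{i+1}$ divides $k_{i+1}$''.
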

	
	\begin{proof}
		Let us denote $\left(c, \mu\right) = \left((e_1, ..., e_r), \left(a_{\src(e_1)}^{k_1}, a_{\src(e_2)}^{k_2}, ..., a_{\src(e_r)}^{k_r}, a_{\trg(e_r)}^{k_{r+1}}\right)\right)$ and let $E_1=E$, ..., $E_r$ be the edge path defined by $\left(c, \mu\right)$ and $x$ in $\mathcal{G}$. For every $i \in \llbracket 1, r \rrbracket$, let us denote by $e_i$ the label of $E_i$ and by $(\src(e_i), N_i)$ the label of $\src(E_i)$. Assume by contradiction that this edge path is not reduced. Then there exists $i \in \llbracket 1, r-1 \rrbracket$ such that $E_{i+1} = \overline{E_i}$, which implies that $e_{i+1} = \overline{e_i}$ and that, denoting by $y = x \cdot \left|\left(e_1, ..., e_i\right), \left(a_{\src(e_1)}^{k_1}, a_{\src(e_2)}^{k_2}, ..., a_{\src(e_i)}^{k_i}, a_{\trg(e_i)}^{k_{i+1}}\right)\right|$, one has:
		\[y \cdot \left\langle a_{\trg(e_i)}^{k_{e_i, \trg}} \right\rangle = y \cdot a_{\trg(e_i)}^{k_{i+1}} \left\langle a_{\trg(e_i)}^{k_{e_i, \trg}} \right\rangle \]
		which implies that $N_{i+1}$ divides $k_{i+1}$. As by assumption the integer $N_{i+1}$ is itself divisible by $k_{e_i, \trg}$, one deduces that $k_{e_i, \trg}$ divides $k_{i+1}$ which contradicts the irreducibility of $\left(c, \mu\right)$. Conclusion follows.
	\end{proof}
	
	We first prove a weaker version of \cite[Lemma 2.17]{hightrans} to recover the situation of Lemma \ref{liaison}:
	\begin{lemma}\label{unesortie}
		Let $\alpha$ be an $\mathcal{H}$-action on a pointed countable set $(X,x)$ whose $\mathcal{H}$-graph $\mathcal{G}$ contains a finite $\mathcal{H}$-graph $K$ such that the quotient $\mathcal{G} / K$ is a tree which is not reduced to a single vertex. Let $e$ be an arbitrary edge of $\mathcal{H}$ and $g \in \left\langle a_{\src(e)} \right\rangle$. Then there exists a reduced word $(c, \mu) = \left((e, ...), (g, ...)\right)$ beginning with $g$ such that the target of the edge path defined by $x$ and $(c, \mu)$ is not in $K$.
	\end{lemma}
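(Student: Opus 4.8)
The plan is to reduce the statement to a purely combinatorial search in $\mathcal{G}$, and for that to exploit the interplay between reduced words and edge paths discussed after Definition \ref{graph}. Recall two facts I will use freely: first, a \emph{non-backtracking} edge path $E_1,\dots,E_r$ of $\mathcal{G}$ (one with $E_{i+1}\ne\overline{E_i}$) always comes from a reduced word; second, a word whose edge path does return along an edge of type $e$ is still reduced provided the intervening vertex exponent is not divisible by $k_{e,\trg}$. Since $\alpha$ is saturated and transitive, every edge path of $\mathcal{G}$ issuing from the vertex $V_0=x\langle\alpha_{\src(e)}\rangle$ is realized by a suitable word, the exponents serving both to select among parallel edges and to land in the prescribed $\langle\alpha_s\rangle$-orbit. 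Conceptually it is cleanest to pass to the Bass–Serre tree: writing $\pi\colon\mathcal{A}\to\mathcal{G}=\Lambda\backslash\mathcal{A}$, the datum $(g,e)$ singles out one edge $\tilde E_1$ of $\mathcal{A}$ at the base vertex over $E_1$, and reduced words beginning with $(e;g,\dots)$ correspond exactly to reduced (hence geodesic) paths of $\mathcal{A}$ through $\tilde E_1$. Their $\mathcal{G}$-endpoints are the $\pi$-images of the vertices of the half-graph $\mathcal{A}_1$ of $\tilde E_1$. Thus it suffices to prove $\pi(\mathcal{A}_1)\not\subseteq\mathcal{V}(K)$.

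First I would dispose of the trivial case: if $V_1:=\trg(E_1)\notin\mathcal{V}(K)$, the length-one word $((e),(g,\cdot))$ already works, being reduced because $r=1$. So assume $V_1\in\mathcal{V}(K)$. The two structural inputs are: (i) the quotient map $\pi$ is surjective on stars, i.e.\ every edge of $\mathcal{G}$ at $\pi(\tilde v)$ lifts to an edge of $\mathcal{A}$ at $\tilde v$, since $\mathcal{G}=\Lambda\backslash\mathcal{A}$; and (ii) because $\mathcal{H}$ is reduced every vertex of $\mathcal{A}$ has degree at least $2$, so $\mathcal{A}_1$ is infinite and each of its vertices other than the root $\trg(\tilde E_1)$ has its full $\mathcal{A}$-star inside $\mathcal{A}_1$. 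Since $K$ is connected (its collapse $\mathcal{G}/K$ is defined) and $\mathcal{G}/K$ is a tree distinct from a point, there is at least one bridge edge $F$ with $\src(F)=P\in\mathcal{V}(K)$ and $\trg(F)=Q\notin\mathcal{V}(K)$.

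The heart of the argument is then a contradiction: suppose $\pi(\mathcal{A}_1)=:W\subseteq K$. Star-surjectivity applied at every non-root vertex of $\mathcal{A}_1$ shows $W$ is star-closed at each of its vertices except possibly the root $V_1$, where it can omit at most the single edge $\overline{E_1}$ (the image of the unique edge removed from the half-graph). Consequently any bridge incident to a vertex of $W$ would have to lie in $W$, forcing $Q\in W\subseteq K$ — impossible — unless the only bridge touching $W$ is $\overline{E_1}$ itself, so that $V_0=\src(E_1)\notin\mathcal{V}(K)$ and $V_1$ carries a single edge of type $\overline e$. In that remaining configuration $V_0$ is \emph{already} outside $K$, and I would reach it by the length-two reduced word $E_1,\overline{E_1}$ with an intervening exponent $a_{\trg(e)}^{k'}$ satisfying $k_{e,\trg}\nmid k'$: the unique $\overline e$-edge sends all of $V_1$ into $V_0$, so the endpoint is $V_0\notin\mathcal{V}(K)$, and the non-divisibility makes the word reduced whenever $|k_{e,\trg}|\ge2$, which holds because a non-loop edge of a reduced $\mathcal{H}$ has both labels of absolute value $\ge2$.

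I expect the genuine obstacle to be exactly this last bottleneck, in two guises. The first is the arithmetic obstruction $k_{e,\trg}\mid M$ (with $M$ the label of $V_1$): then no reduced backtrack along $e$ exists, and one must instead leave $V_1$ through a \emph{different} edge, which the tree structure of $\mathcal{G}/K$ and saturation make available, routing toward a bridge. The second, and the delicate one, is when $e$ is a loop one of whose labels equals $\pm1$, so that a reduced backtrack along $e$ is forbidden outright; here I would invoke the standing non-amenability of $\Gamma$ (such a loop cannot be all of $\mathcal{H}$, whence its vertex $s$ carries a further edge $h$) together with saturation to exit $V_1$ along an edge of type $h\ne\overline e$ without backtracking, reducing to a strictly simpler escape problem. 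Organizing this casework into a well-founded descent, and verifying at each step that the prescribed exponent simultaneously selects the intended edge and preserves reducedness, is where the real work lies; the tree hypothesis on $\mathcal{G}/K$ and the saturation of $\mathcal{G}$ are precisely what keep the escape from being trapped in a finite pocket.
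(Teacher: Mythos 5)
Your reduction of the statement to the assertion $\pi(\mathcal{V}(\mathcal{A}_1))\not\subseteq\mathcal{V}(K)$, where $\mathcal{A}_1$ is the half-tree of the lifted edge $\tilde E_1$ in the Bass--Serre tree, is sound (it rests on the normal form theorem: reduced words give reduced, hence geodesic, paths in $\mathcal{A}$, and conversely every geodesic through $\tilde E_1$ is realized by a reduced word with first letter $g$), and it is a genuinely different framework from the paper, which never leaves $\mathcal{G}$ and instead runs a three-case analysis on reduced edge paths issuing from $E$ (a reduced cycle through $E$; a reduced path escaping $K$; all reduced paths from $E$ being trees trapped in $K$, handled by bouncing off a dead-end vertex with a nontrivial vertex element). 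However, your argument for $\pi(\mathcal{A}_1)\not\subseteq K$ has a genuine gap. Star-surjectivity does show that $W:=\pi(\mathcal{A}_1)$ is star-closed at every vertex except possibly $V_1$, where only $\overline{E_1}$ may be missing; but your dichotomy omits the case where \emph{no} bridge of $K$ is incident to a vertex of $W$. In that configuration $V_0=\src(E_1)$ lies in $K\setminus W$, all bridges emanate from $K\setminus W$, and nothing in your argument is contradicted; in particular your conclusion ``$V_0\notin\mathcal{V}(K)$'' in the residual case is unjustified, so the length-two backtracking word you propose lands at a vertex that may well still be in $K$. (A correct way to finish from your setup does exist -- one checks that $\mathcal{A}_1$ is a whole connected component of $\pi^{-1}(\mathcal{V}(W))$, so that $W=\mathcal{A}_1/\Stab_\Lambda(\mathcal{A}_1)$ with $\Stab_\Lambda(\mathcal{A}_1)$ forced to stabilize the unique boundary edge $\tilde E_1$, hence elliptic, hence with infinite quotient of the unbounded locally finite tree $\mathcal{A}_1$, contradicting finiteness of $W$ -- but this is not the argument you give.)

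Beyond that, your last paragraph explicitly defers ``the real work'': the arithmetic obstruction $k_{e,\trg}\mid M$ to a reduced backtrack along $\overline{E_1}$ (returning along $\overline{E_1}$ itself needs an exponent divisible by $M\wedge k_{e,\trg}$ but not by $k_{e,\trg}$, which fails precisely when $k_{e,\trg}\mid M$), and the loop cases. These are not peripheral bottlenecks; they are the substance of the paper's proof, which resolves them by splicing an escaping path onto a reduced cycle when one exists, and otherwise by locating a maximal reduced tree path, observing that its dead-end vertex has a single incident edge of the relevant type and that $e_r$ cannot there be a loop (a saturated vertex at a loop always has both an incoming and an outgoing edge of that type), so that $|k_{e_r,\trg}|\ge 2$ by reducedness of $\mathcal{H}$ and the turn-around word is reduced. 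As written, your proposal is a plausible plan with one incorrect step and the decisive cases left open, so it does not constitute a proof.
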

	
	\begin{proof}
		Let $E$ be the edge of type $e$ deriving from $x \cdot g$ and let us denote by $V_0$ the source of $E$. Let us denote by $\mathcal{E}$ the set of reduced edge paths of $\mathcal{G}$ beginning with $E$. Suppose first that it contains a reduced cycle, say $C = (E=E_1,...,E_r)$ and let us consider an edge path $C' = (F_1,...,F_s)$ which is a tree connecting $V_0$ to a vertex outside $K$. If $F_1 \neq \overline{E_r}$, then the edge path $E_1, ..., E_r, F_1, ..., F_s$ is reduced, has its target outside $K$ and is defined by $x$ and a reduced word $(c, \mu) = \left((e, ...), (g, ...)\right)$. Otherwise, let $i_0 = \max \{i \in \llbracket 1, \min(s, r) \rrbracket \mid F_i = \overline{E_{r-i+1}}\}$. As $C'$ is a tree and $C$ is a cycle, one has necessarily $i_0 < r$. Thus, the edge path $E_1, ..., E_{r-i_0}, F_{i_0+1}, ..., F_s$ (which is simply equal to $E_1, ..., E_{r-i_0}$ in the case where $i_0=s$) is reduced, has its target outside $K$ and is defined by $x$ and a reduced word $(c, \mu) = \left((e, ...), (g, ...)\right)$.
		
		Otherwise, every reduced edge path beginning with $E$ is a tree. If there exists an element of $\mathcal{E}$ with target outside $K$, then the conclusion follows. If such an element does not exist, the graph $K$ being finite, let us consider a maximal element $E=E_1,...,E_r$ of $\mathcal{E}$ (for the inclusion of paths) and let us denote by $e_i$ the label of $E_i$ (so that $e_1=e$). By maximality, the vertex $\trg(E_r)$ has a single incident edge (which is $E_r$). Notice that this situation cannot happen in the case where $e_r$ is a loop, because in that case every saturated vertex of type $\src(e_r)$ has at least an incoming and an outgoing edge labeled $e_r$. Denoting by $s_i = \src(e_i)$ for every $i \in \llbracket 1, r \rrbracket$, this edge path leads to a reduced word $(c, \mu) = \left((e_1, ..., e_r), (g, a_{s_2}^{k_2},..., a_{s_r}^{k_r}, 1)\right)$ such that \[x \cdot \left|c, \mu\right| \left\langle a_{\trg(e_r)}^{k_{e_r, \trg}} \right\rangle = x \cdot \left|c, \mu\right| a_{\trg(e_r)} \left\langle a_{\trg(e_r)}^{k_{e_r, \trg}} \right\rangle. \]
		Let us denote $\alpha = a_{\trg(e_r)}$.
		Hence there exists an integer $m \in \mathbb{Z}$ such that \[x \cdot \left|c, \mu\right| = x \cdot \left|c, \mu\right| \alpha^{1+k_{e_r, \trg}m}.\]
		As $\mathcal{H}$ is reduced and $e_r$ is not a loop, one has $k_{e_r, \trg} \neq 1$. Hence, the word \[\left(c', \mu'\right) = \left((e_1, ..., e_r, \overline{e_r}, ..., \overline{e_1}), (g, a_{s_2}^{k_2},..., a_{s_r}^{k_r}, \alpha, a_{s_r}^{-k_r}, ..., a_{s_2}^{-k_2}, g^{-1})\right)\] is reduced and the edge path defined by $x$ and $\left(c', \mu'\right)$ is $E_1, ..., E_r, \overline{E_r}, ..., \overline{E_1}$, so has target $V_0$. \\
		Let us consider a reduced word $\left(d, \delta\right)$ such that the edge path defined by $x$ and $\left|d, \delta \right|$ has target not in $K$ (as no assumption on the type of $\delta$ is required, the existence of $\left(d, \delta\right)$ is simply guarantied by the finiteness of $K$). As $\mathcal{E}$ does not contain any edge path with target not in $K$ by assumption, the concatenation $(c', \mu')*(d, \delta)$ is reduced and of the form $\left((e, ...), (g, ...)\right)$, which allows us to conclude. 
	\end{proof}
	
	Now we can prove the main argument of Theorem \ref{toptrans}:
	
	\begin{lemma}\label{sortie}
		Let $(\alpha_i)_{1 \leq i \leq \Sigma}$ be a collection of $\mathcal{H}$-actions on pointed countable sets $(X_i, x_i)$ whose $\mathcal{H}$-graphs $\mathcal{G}^{(i)}$ satisfy the following conditions: \begin{itemize}
			\item $\mathcal{G}^{(i)}$ contains a finite subgraph $K_i$ such that the quotient $\mathcal{G}^{(i)} / K_i$ is an infinite tree;
			\item for every edge $E_i \in \mathcal{E}\left(\mathcal{G}^{(i)}\right) \setminus \mathcal{E}(K_i)$ labeled $e$ whose half graph is in $\mathcal{G}^{(i)} \setminus K_i$, denoting by $\left(\trg(e), N_i\right)$ the label of $\trg\left(E_i\right)$, the integer $N_i$ is divisible by $k_{e, \trg}$.
		\end{itemize}
		Then, there exists an element $\gamma \in \Gamma$ such that for every $i \in \llbracket 1, \Sigma \rrbracket$, there exists a vertex deriving from $x_i \cdot \gamma$ in $\mathcal{G}^{(i)} \setminus K_i$. 
	\end{lemma}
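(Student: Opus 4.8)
The plan is to argue by induction on $\Sigma$, reducing the simultaneous statement to the single-action escape already provided by Lemma \ref{unesortie}. The base case $\Sigma = 1$ is exactly Lemma \ref{unesortie} (applied with an arbitrary edge $e$ and $g = 1$): it produces a reduced word whose associated element $\gamma$ sends $x_1$ to an element deriving a vertex outside $K_1$. Throughout, I would carry the following strengthened invariant along the induction: the element $\gamma$ built so far is represented by a word whose associated edge path, read from each $x_i$ with $i$ in the range already treated, leaves $K_i$ and thereafter traces a \emph{reduced} (non-backtracking) edge path inside the infinite tree part $\mathcal{G}^{(i)} \setminus K_i$. This is where the divisibility hypothesis enters: combined with Lemma \ref{backtrack}, it guarantees that in $\mathcal{G}^{(i)} \setminus K_i$ a reduced word always traces a reduced edge path, so that once a basepoint has been driven into the tree part by such a path, every reduced continuation keeps it there forever.

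For the inductive step, suppose $\gamma$ escapes $K_1, \dots, K_{\Sigma-1}$ with the above invariant, and write $W_i$ for the vertex outside $K_i$ reached from $x_i$ (for $i < \Sigma$). I would re-point the action $\alpha_\Sigma$ at $x_\Sigma \cdot \gamma$; since the hypotheses of Lemma \ref{unesortie} bear only on the unpointed pair $(\mathcal{G}^{(\Sigma)}, K_\Sigma)$, changing the basepoint is harmless, and Lemma \ref{unesortie} furnishes a reduced word $\delta = ((e,\dots),(g,\dots))$ beginning with a prescribed edge $e$ and a prescribed $g \in \langle a_{\src(e)} \rangle$ whose edge path from $x_\Sigma \cdot \gamma$ has target outside $K_\Sigma$. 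Setting the new element to be $\gamma \delta$ then handles $\alpha_\Sigma$, since $x_\Sigma \cdot (\gamma\delta) = (x_\Sigma\cdot\gamma)\cdot\delta$ lands outside $K_\Sigma$.

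The main obstacle is to ensure that appending $\delta$ does not drag any of the previously escaped basepoints $W_i$ ($i < \Sigma$) back into $K_i$. Here I would exploit the freedom left by Lemma \ref{unesortie} in the choice of the first edge $e$ and, crucially, the first element $g$: for each $i < \Sigma$ there is at most a controlled (finite) set of values of $g \in \langle a_{\src(e)} \rangle$ for which the concatenation of $\gamma$ with $\delta$, read from $x_i$, fails to be reduced at the junction vertex $W_i$ (these are exactly the $g$ for which the transition power becomes divisible by the relevant edge label $k_{e,\src}$). Since there are only finitely many indices $i < \Sigma$ and infinitely many admissible $g$, I can pick a single $g$ avoiding all these bad values simultaneously; the resulting word $\gamma\delta$ is then reduced as read from every $x_i$. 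Applying the invariant --- i.e. the divisibility hypothesis together with Lemma \ref{backtrack} --- to the suffix of $\gamma\delta$ acting in the tree part $\mathcal{G}^{(i)} \setminus K_i$, this reduced path cannot re-enter the finite subgraph $K_i$, so $x_i \cdot (\gamma\delta)$ still derives a vertex outside $K_i$. This preserves the invariant for all $i \le \Sigma$ and closes the induction, yielding the desired common element $\gamma$.
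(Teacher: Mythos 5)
Your proof follows essentially the same route as the paper's: induction on $\Sigma$, with Lemma \ref{unesortie} supplying both the base case and the new escape word in the inductive step, the freedom in the prescribed first pair $(e,g)$ used to keep the concatenated edge path reduced at the junction, and then Lemma \ref{backtrack} together with the divisibility hypothesis and the tree structure of $\mathcal{G}^{(i)}/K_i$ keeping the previously escaped basepoints outside $K_i$. One imprecision to repair: the set of bad $g=a_{\src(e)}^{j}$ is not finite --- it is exactly the set of $j$ divisible by $k_{e,\src}$, hence an infinite proper subset of $\mathbb{Z}$ --- so the count ``finitely many bad values versus infinitely many admissible $g$'' does not literally apply; what makes a good $g$ exist is that $|k_{e,\src}|\geq 2$ (reducedness of $\mathcal{H}$ plus divisibility of the orbit size), and in the residual situation where the continuation edge would have to be $\overline{e_r}$ with $e_r$ a loop one should instead prescribe $e=e_r$, which is precisely the explicit dichotomy the paper makes ($e=e_r$, $g=1$ if $e_r$ is a loop; $e=\overline{e_r}$, $g=a_{\trg(e_r)}$ otherwise).
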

	
	\begin{proof}
		
		We argue by induction on $\Sigma$. 
		\paragraph{Base case} The case where $\Sigma=1$ results from Lemma \ref{unesortie}. 
		\paragraph{Induction step} Let $\Sigma \geq 2$ and let us assume that the result is true for a collection of $\Sigma-1$ actions satisfying the assumptions of Lemma \ref{sortie}. Let $\gamma \in \Gamma$ such that for every $i \in \llbracket 1, \Sigma-1 \rrbracket$, the vertex of type $\trg(e_r)$ deriving from $x_i \cdot \gamma$ is in $\mathcal{G}^{(i)} \setminus K_i$ and let us write $\gamma = \left| c, \mu \right|$ for some reduced word $\left(c, \mu\right)$ of type $e_1, ..., e_r$. If $e_r$ is a loop, let $e=e_r \neq \overline{e_r}$ and $g=1$, otherwise let $e = \overline{e_r}$ and $g = a_{\trg(e_r)}$. By Lemma \ref{unesortie} applied to $\alpha_{\Sigma}$, there exists a reduced word $(d,\delta) = \left((e, f_2, ..., f_s), (g, ...)\right)$ such that the vertex of type $\trg(f_s)$ deriving from $x_r \cdot \gamma |d, \delta|$ is not in $K_r$. \\
		Let $i \in \llbracket 1, r-1 \rrbracket$ and $V_i$ be the vertex of type $\src(e)$ deriving from $x_i \cdot \gamma g$ in $\mathcal{G}^{(i)}$. Let us denote by $E_i$ the edge labeled $e$ with source $V_i$ deriving from $x_i \cdot \gamma g$. If $e_r$ is a loop, one has $e \neq \overline{e_r}$ so the half graph of $E_i$ is in $\mathcal{G}^{(i)} \setminus K_i$. Likewise, if $e_r$ is not a loop, as $k_{e, \src}$ divides $\left|x_i \cdot \gamma \left\langle a_{\src(e)}^{k_{e, \src}} \right\rangle\right|$ and $k_{e, \src} \neq 1$, one has \[x_i \cdot \gamma \left\langle a_{\src(e)}^{k_{e, \src}} \right\rangle \neq x_i \cdot \gamma a_{\src(e)} \left\langle a_{\src(e)}^{k_{e, \src}} \right\rangle.\] Hence the half graph of $E_i$ is in $\mathcal{G}^{(i)} \setminus K_i$. Hence by Lemma \ref{backtrack}, the vertex of type $\trg(f_s)$ deriving from $x_i \cdot \gamma \left|d, \delta\right|$ is not in $K_i$. So the element $\gamma \left|d, \delta\right|$ is suitable, which concludes the induction step.
		
	\end{proof}
	
	We are now able to prove Theorem \ref{toptrans}:
	
	\begin{proof}[Proof of Theorem \ref{toptrans}]
		Let $\Lambda_1,..., \Lambda_{2S}$ be a collection of $2S$ subgroups which belong to $\mathcal{K}(\Gamma)$ and satisfying \[\bm{\Ph}_{\mathcal{H},s}(\Lambda_i) = \bm{\Ph}_{\mathcal{H},s}(\Lambda_{S+i})\] for every $i \in \llbracket 1, S \rrbracket$.
		For every $i \in \llbracket 1, 2S \rrbracket$, let $\alpha_i$ be a $\Gamma$-right action on a countable set $(X_i,x_{i})$ satisfying $\Stab(x_{i}) = \Lambda_i$ and let $\mathcal{G}^{(i)}$ be the $\mathcal{H}$-graph of $\alpha_i$ (seen as an $\mathcal{H}$-action). Let $V^{(i)} \in \mathcal{V}\left(\mathcal{G}^{(i)}\right)$ be the vertex of type $s$ deriving from $x_{i}$ and fix $R > 0$. Let $\beta_i$ be a sub $\mathcal{H}$-preaction of $\alpha_i$ defined on a countable subset $Y_i \subseteq X_i$ containing $x_{i}$ whose $\mathcal{H}$-graph $K_i$ contains the $R$-ball $B_{\mathcal{G}^{(i)}}\left(V^{(i)}, R\right)$. Let $\mathcal{F}^{(i)}$ be the completion of $K_i$ given by Lemma \ref{completion1} and $\widetilde{\alpha_i}$ be an extension of $\beta_i$ on a countable set $\left(\widetilde{X_i}, x_i\right)$ containing $Y_i$ whose $\mathcal{H}$-graph is $\mathcal{F}^{(i)}$ (whose existence is given by Lemma \ref{retour}). 
		
		By Lemma \ref{sortie} there exists a word $(c, \mu)$ such that there exists a vertex deriving from $x_i \cdot |c, \mu|$ in $\mathcal{F}^{(i)} \setminus K_i$ for every $i \in \llbracket 1, 2S \rrbracket$. By Proposition \ref{stab1}, we can apply Lemma \ref{liaison} to the sub $\mathcal{H}$-preactions of $\widetilde{\alpha_i}$ defined by $\beta_i$ and the prefixes of $(c, \mu)$: there exist $\mathcal{H}$-preactions $\gamma_i$ extending $\beta_i$ and $\beta_{S+i}$ and a word $m$ in the generators of $\Gamma$ such that $x_i \cdot m = x_{i+S}$ for every $i \in \llbracket 1, 2S \rrbracket$. Denoting by $\mathcal{L}^{(i)}$ the $\mathcal{H}$-graph of $\gamma_i$, we use Lemma \ref{completion1} to build an $\mathcal{H}$-graph $\mathcal{H}^{(i)}$ containing $\mathcal{L}^{(i)}$ such that the quotient $\mathcal{H}^{(i)} / \mathcal{L}^{(i)}$ is a forest. By construction, $\mathcal{H}^{(i)}$ contains $K_i$ and $K_{i+S}$ as disjoint subgraphs and the quotient $\mathcal{H}^{(i)} / \left(K_i \sqcup K_{i+S}\right)$ is a tree. Applying Lemma \ref{retour} to $\mathcal{H}^{(i)}$ for every $i \in \llbracket 1, 2S \rrbracket$ leads to the existence of an $\mathcal{H}$-action (hence a subgroup $\tilde{\Lambda_i}$ of $\Gamma$) satisfying the following conditions: \begin{itemize}
			\item the Schreier graph of $\tilde{\Lambda_i}$ and the one of $\Lambda_i$ share the same $R$-ball around the origin;
			\item the Schreier graph of $m\tilde{\Lambda_i}m^{-1}$ and the one of $\Lambda_{S+i}$ share the same $R$-ball around the origin.
		\end{itemize} 
		Hence, the conclusion follows.
	\end{proof}
	
	The following result then relies on the topological transitivity on each piece of the partition and on an application of Baire's theorem:
	
	\begin{corollary}\label{gdelta}
		For every $N \in \mathbb{N} \cup \{\infty\}$ there exists a dense $G_{\delta}$ subset of $\bm{\Ph}_{\mathcal{H}, s}^{-1}(N) \cap \mathcal{K}(\Gamma)$ whose elements consist of subgroups having dense conjugacy class in $\bm{\Ph}_{\mathcal{H}, s}^{-1}(N) \cap \mathcal{K}(\Gamma)$. 
	\end{corollary}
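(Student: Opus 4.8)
The plan is to derive the corollary from the topological transitivity furnished by Theorem \ref{toptrans} through the classical Baire category argument producing a dense $G_\delta$ of points with dense orbit. Throughout, the relevant orbit of a subgroup $\Lambda$ under the action by conjugation $\Gamma \curvearrowright \Sub(\Gamma)$ is its conjugacy class, and the ambient space is $Y := \mathcal{K}(\Gamma) \cap \bm{\Ph}_{\mathcal{H},s}^{-1}(N)$.

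First I would check that $Y$ is a Polish space, so that Baire's theorem applies. If $N \in \mathbb{N} \setminus \mathcal{Q}_{\mathcal{H},s}$ then $\bm{\Ph}_{\mathcal{H},s}^{-1}(N) = \emptyset$ and there is nothing to prove, so I may assume $N \in \mathcal{Q}_{\mathcal{H},s} \cup \{\infty\}$. By Proposition \ref{topologiepieces} the piece $\bm{\Ph}_{\mathcal{H},s}^{-1}(\infty)$ is closed, so for $N = \infty$ the set $Y$ is a closed subset of the compact metrizable space $\Sub(\Gamma)$, hence Polish. For $N \in \mathcal{Q}_{\mathcal{H},s}$, again by Proposition \ref{topologiepieces}, $\bm{\Ph}_{\mathcal{H},s}^{-1}(N)$ is open, so $Y$ is open in $\mathcal{K}(\Gamma)$; since $\mathcal{K}(\Gamma)$ is a closed, hence Polish, subspace of $\Sub(\Gamma)$ and open subspaces of Polish spaces are Polish, $Y$ is Polish in this case as well. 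In particular $Y$ is a second countable Baire space in both regimes.

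Next I would record that conjugation acts on $\Sub(\Gamma)$ by homeomorphisms and preserves both $\mathcal{K}(\Gamma)$ and each phenotype piece $\bm{\Ph}_{\mathcal{H},s}^{-1}(N)$, so it restricts to an action $\Gamma \curvearrowright Y$ by homeomorphisms. By Theorem \ref{toptrans} this restricted action is highly topologically transitive, hence in particular topologically transitive (the case $r = 1$): for all non empty open $U, V \subseteq Y$ there exists $\gamma \in \Gamma$ with $\gamma U \gamma^{-1} \cap V \neq \emptyset$. Then I would run the standard argument. Fixing a countable basis $(U_n)_{n \in \mathbb{N}}$ of non empty open subsets of $Y$, I set $W_n := \bigcup_{\gamma \in \Gamma}\{\Lambda \in Y \mid \gamma \Lambda \gamma^{-1} \in U_n\}$. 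Each $W_n$ is open as a union of preimages of $U_n$ under the homeomorphisms $\Lambda \mapsto \gamma \Lambda \gamma^{-1}$, and it is dense: given any non empty open $V \subseteq Y$, topological transitivity yields $\gamma$ with $\gamma V \gamma^{-1} \cap U_n \neq \emptyset$, so $V$ meets $\{\Lambda \mid \gamma \Lambda \gamma^{-1} \in U_n\} \subseteq W_n$. By Baire's theorem $G := \bigcap_{n} W_n$ is a dense $G_\delta$ subset of $Y$, and a subgroup $\Lambda \in Y$ lies in $G$ if and only if its conjugacy class meets every $U_n$, i.e. if and only if its conjugacy class is dense in $Y$; this is exactly the asserted set.

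Since the whole argument is soft once Theorem \ref{toptrans} is granted, I do not expect a genuine difficulty; the only step deserving care is the verification that $Y$ is a Baire space in both regimes — closed (hence immediately Polish) when $N = \infty$, but merely open when $N$ is finite, where one must invoke that open subspaces of Polish spaces are Polish. Everything else is the routine passage from topological transitivity to a generic dense-orbit set.
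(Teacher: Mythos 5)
Your proof is correct and is exactly the argument the paper intends: the paper itself only remarks that the corollary "relies on the topological transitivity on each piece of the partition and on an application of Baire's theorem," and your write-up supplies precisely that standard derivation (including the correct verification, via Proposition \ref{topologiepieces}, that the piece is Polish in both the closed case $N=\infty$ and the open case $N\in\mathcal{Q}_{\mathcal{H},s}$). No discrepancy with the paper's approach.
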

	
	Notice that the piece $\bm{\Ph}_{\mathcal{H}, s}^{-1}(\infty)$ has an intrinsic characterization in the case of a non-amenable GBS group $\Gamma$: this is the set of subgroups of $\Gamma$ which do not contain any non-trivial elliptic element, \textit{i.e.} any non-trivial element which is commensurable to its conjugates by Proposition \ref{ell}. Hence we will simply denote by $\mathcal{K}_{\infty}$ the set $\mathcal{K}(\Gamma) \cap \bm{\Ph}_{\mathcal{H},s}^{-1}(\infty)$. The open pieces of the phenotypical decomposition also have an intrinsic characterization: they are the maximal open subsets of $\mathcal{K}(\Gamma) \setminus \mathcal{K}_{\infty}$ (for the inclusion) on which the action by conjugation is topologically transitive. Let us call such an open set \textbf{transitive}. This fact relies on the following lemma: 
	
	\begin{lemma}\label{decompdyn}
		Let $\Gamma$ be a group acting on a topological space $X$. Then \begin{itemize}
			\item every transitive open subset of $X$ is contained in a maximal transitive open set;
			\item two maximal transitive open sets are either equal or disjoint. 
		\end{itemize}   
	\end{lemma}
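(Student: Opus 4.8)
The plan is to reduce both assertions to a single \textbf{amalgamation property}: if $U_1$ and $U_2$ are transitive open sets with $U_1 \cap U_2 \neq \emptyset$, then $U_1 \cup U_2$ is again transitive, and more generally the union of a directed family of transitive open sets is transitive. Throughout I read ``$U$ is transitive'' as the condition that for all nonempty open $V_1, V_2 \subseteq U$ there is $\gamma \in \Gamma$ with $\gamma V_1 \cap V_2 \neq \emptyset$, where $\gamma W$ denotes the image of $W$ under the homeomorphism associated to $\gamma$ (the action being by homeomorphisms, as is implicit in the notion of topological transitivity and as holds for the conjugation action on $\mathrm{Sub}(\Gamma)$). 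Granting amalgamation, the two bullets are immediate. For the first, I would apply Zorn's lemma to the poset of transitive open sets containing a given transitive open $U_0$, ordered by inclusion: its chains admit the (open, transitive) union as an upper bound, and a maximal element there is maximal among \emph{all} transitive open sets, since any strictly larger transitive open set would still contain $U_0$. For the second, if two maximal transitive open sets $M_1, M_2$ meet, then $M_1 \cup M_2$ is transitive and contains both, so maximality of $M_1$ gives $M_1 = M_1 \cup M_2$, and symmetrically $M_2 = M_1 \cup M_2$, whence $M_1 = M_2$.

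The heart of the argument, and the step I expect to be the main obstacle, is the amalgamation property, because the relation ``some $\gamma$ carries a point of $A$ into $B$'' is symmetric but not obviously transitive. The key idea is to carry along a shrunk open set. Given nonempty open $A \subseteq U_1$ and $B \subseteq U_2$, set $W := U_1 \cap U_2 \neq \emptyset$. Transitivity of $U_1$ applied to the nonempty open subsets $A, W \subseteq U_1$ yields $\gamma_1$ with $\gamma_1 A \cap W \neq \emptyset$; crucially I set $W_1 := \gamma_1 A \cap W$, a nonempty open subset of $U_2$ that is contained in $\gamma_1 A$. Transitivity of $U_2$ applied to $W_1, B \subseteq U_2$ then yields $\gamma_2$ with $\gamma_2 W_1 \cap B \neq \emptyset$, and since $W_1 \subseteq \gamma_1 A$ we obtain
\[
\emptyset \neq \gamma_2 W_1 \cap B \subseteq \gamma_2 \gamma_1 A \cap B,
\]
so $\gamma := \gamma_2 \gamma_1$ satisfies $\gamma A \cap B \neq \emptyset$. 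This is exactly the composition the naive argument fails to produce, recovered by intersecting with $W$ \emph{before} translating a second time.

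It then remains to deduce transitivity of $U_1 \cup U_2$ from this cross-estimate by a short case analysis, which I view as routine. Given nonempty open $V_1, V_2 \subseteq U_1 \cup U_2$, choose indices with $V_1 \cap U_i \neq \emptyset$ and $V_2 \cap U_j \neq \emptyset$: if $i = j$, transitivity of $U_i$ applied to $V_1 \cap U_i$ and $V_2 \cap U_i$ already produces the required $\gamma$; if $i \neq j$, the cross-estimate above with $A = V_1 \cap U_i$ and $B = V_2 \cap U_j$ does, using in either case that $\gamma(V_1 \cap U_i) \cap (V_2 \cap U_j) \subseteq \gamma V_1 \cap V_2$. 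The directed-union version needed for Zorn is even simpler: any $V_1, V_2$ meet members $U_{i_1}, U_{i_2}$ of the family, and by directedness some $U_k$ contains both, so transitivity of $U_k$ applies directly. No use of the specific structure of $\mathcal{K}(\Gamma)$ is required; the statement is purely about a group acting by homeomorphisms on a topological space.
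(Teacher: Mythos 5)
Your proof is correct and follows essentially the same strategy as the paper: Zorn's lemma with transitivity of chain unions for the first bullet, and for the second the amalgamation step that routes an open set through the nonempty intersection $U_1 \cap U_2$ before translating again. The only cosmetic difference is that you compose two group elements where the paper composes three ($\gamma_2^{-1}\gamma\gamma_1$), but the underlying ``shrink and carry along'' idea is identical.
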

	
	\begin{proof}
		By Zorn's lemma, it suffices to show that, given a chain $(U_i)_{i \in I}$ of transitive open sets, the union $U = \cup_{i \in I}U_i$ is transitive. Let $\Omega_1$ and $\Omega_2$  be two non-empty open subsets of $U$. Let $(i, j) \in I^2$ such that $\Omega_1 \cap U_i \neq \emptyset$ and $\Omega_2 \cap U_j \neq \emptyset$. By symmetry, one can assume that $U_i \subseteq U_j$. Hence, $U_j$ being topologically transitive, there exists $\gamma \in \Gamma$ such that $\gamma (\Omega_1 \cap U_i) \cap (\Omega_2 \cap U_j) \neq \emptyset$. In particular, $\gamma \Omega_1 \cap \Omega_2 \neq \emptyset$, so $U$ is topologically transitive, which proves the first point.
		
		Let $U_1$ and $U_2$ be two maximal topologically transitive open sets. Assume that $U_1 \cap U_2 \neq \emptyset$ and let us show that $U_1 \cup U_2$ is topologically transitive. Let $\Omega_1$ and $\Omega_2$ be non-empty open subsets of $U_1 \cup U_2$. For $i \in \{1,2\}$, there exists $k_i \in \{1,2\}$ such that $\Omega_i \cap U_{k_i} \neq \emptyset$. By topological transitivity of $U_{k_i}$, there exists $\gamma_i \in \Gamma$ such that $\gamma_i  (\Omega_i \cap U_{k_i}) \cap U_1 \cap U_2 \neq \emptyset$. In particular, $\gamma_i \Omega_i \cap (U_1 \cap U_2)$ are non-empty open subsets of $U_1$ so there exists $\gamma \in \Gamma$ such that $\gamma (\gamma_1 \Omega_1 \cap (U_1 \cap U_2)) \cap (\gamma_2 \Omega_2 \cap (U_1 \cap U_2)) \neq \emptyset$, which implies that $\gamma_2^{-1}\gamma \gamma_1 \Omega_1 \cap \Omega_2 \neq \emptyset$. Thus, $U_1 \cup U_2$ is topologically transitive, which implies that $U_1 = U_2$ by maximality of $U_1$ and $U_2$.
	\end{proof}
	This immediately leads to the following proposition:
	
	\begin{proposition}\label{intrinsic}
		For any non-amenable GBS group $\Gamma$ defined by a finite graph of groups $\mathcal{H}$ with infinite cyclic vertex and edge groups, the decomposition $\mathcal{K}(\Gamma) = \bigsqcup_{N \in \mathcal{Q}_{\mathcal{H}, s}}\bm{\Ph}_{\mathcal{H}, s}^{-1}(N) \cap \mathcal{K}(\Gamma)$ depends neither on the choice of the graph $\mathcal{H}$ nor on the choice of the vertex $s \in \mathcal{V}\left(\mathcal{H}\right)$.
	\end{proposition}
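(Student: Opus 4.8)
The plan is to give an \emph{intrinsic} description of each piece of the phenotypical decomposition, that is, a description referring only to $\Gamma$ and to its action by conjugation on $\mathcal{K}(\Gamma)$, with no mention of the auxiliary data $(\mathcal{H}, s)$. Once such a description is available, two decompositions arising from different choices of $(\mathcal{H}, s)$ both coincide with it, and hence with each other.

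First I would dispose of the piece of infinite phenotype. By Proposition \ref{ell}, for a non amenable GBS group an element is elliptic if and only if it is commensurable to all of its conjugates, a purely group-theoretic condition. As already observed, $\mathcal{K}_{\infty} = \mathcal{K}(\Gamma) \cap \bm{\Ph}_{\mathcal{H},s}^{-1}(\infty)$ is exactly the set of subgroups in $\mathcal{K}(\Gamma)$ containing no nontrivial elliptic element, so $\mathcal{K}_{\infty}$ does not depend on $(\mathcal{H}, s)$.

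Next I would characterize the open pieces $\bm{\Ph}_{\mathcal{H},s}^{-1}(N) \cap \mathcal{K}(\Gamma)$ (for $N \in \mathcal{Q}_{\mathcal{H}, s}$, nonempty) as the maximal transitive open subsets of $\mathcal{K}(\Gamma) \setminus \mathcal{K}_{\infty}$ in the sense of Lemma \ref{decompdyn}. Each such piece is open by Proposition \ref{topologiepieces}, is contained in $\mathcal{K}(\Gamma) \setminus \mathcal{K}_{\infty}$, and is topologically transitive by Theorem \ref{toptrans} (high topological transitivity being in particular $1$-topological transitivity). The key step is to show that no transitive open subset $U$ of $\mathcal{K}(\Gamma) \setminus \mathcal{K}_{\infty}$ can meet two distinct pieces. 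Since the phenotype is invariant under conjugation, each piece is $\Gamma$-invariant; if $U$ met two distinct pieces $P$ and $P'$, then $U \cap P$ and $U \cap P'$ would be nonempty disjoint open subsets of $U$, and topological transitivity would yield $\gamma \in \Gamma$ with $\gamma \cdot (U \cap P) \cap (U \cap P') \neq \emptyset$; but $\gamma \cdot (U \cap P) \subseteq P$, which is disjoint from $P'$, a contradiction. Hence every transitive open subset of $\mathcal{K}(\Gamma) \setminus \mathcal{K}_{\infty}$ lies inside a single piece. Combined with the transitivity of the pieces, this shows that the open pieces are precisely the maximal transitive open subsets of $\mathcal{K}(\Gamma) \setminus \mathcal{K}_{\infty}$: each piece is maximal because any transitive open set containing it is confined to a single piece and so equals it, while conversely Lemma \ref{decompdyn} guarantees that any maximal transitive open set is contained in some piece and therefore equals it.

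Finally I would conclude. The family of maximal transitive open subsets of $\mathcal{K}(\Gamma) \setminus \mathcal{K}_{\infty}$, together with $\mathcal{K}_{\infty}$ itself, is defined purely in terms of $\Gamma$ acting by conjugation on $\mathcal{K}(\Gamma)$. By the two previous steps this family is exactly the family of pieces of the phenotypical decomposition, independently of the choice of graph of groups $\mathcal{H}$ and vertex $s$. Therefore the decomposition, viewed as a partition of $\mathcal{K}(\Gamma)$ into subsets, depends on neither choice. I expect the main obstacle to be the disjointness-and-maximality argument of the middle step, where the interaction between conjugation-invariance of the phenotype and topological transitivity must be arranged carefully; the remaining ingredients are an assembly of Theorem \ref{toptrans}, Proposition \ref{topologiepieces}, Proposition \ref{ell} and Lemma \ref{decompdyn}.
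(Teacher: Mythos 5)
Your proposal is correct and follows essentially the same route as the paper: the paper's own proof is precisely the observation that $\mathcal{K}_{\infty}$ is intrinsic (via Proposition \ref{ell}) and that the open pieces are the maximal transitive open subsets of $\mathcal{K}(\Gamma) \setminus \mathcal{K}_{\infty}$, relying on Proposition \ref{topologiepieces}, Theorem \ref{toptrans} and Lemma \ref{decompdyn}. You merely spell out the disjointness-and-maximality step (conjugation-invariance of the pieces confining any transitive open set to a single piece) that the paper leaves implicit.
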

	
	\begin{proof}
		The pieces $\bm{\Ph}_{\mathcal{H},s}^{-1}(N) \cap \mathcal{K}(\Gamma)$ (for $N \in \mathcal{Q}_{\mathcal{H},s} \setminus \{\infty\}$) form the maximal transitive open subsets for the action by conjugation of $\Gamma$ on $\mathcal{K}(\Gamma) \setminus \mathcal{K}_{\infty}$.
	\end{proof}

	\bibliographystyle{alpha}

	\bigskip
	{\footnotesize
		
		\noindent
		{\textsc{ENS-Lyon,
				Unité de Mathématiques Pures et Appliquées,  69007 Lyon, France}}
		\par\nopagebreak \texttt{sasha.bontemps@ens-lyon.fr}
	}
	
\end{document}